\title{Restricted universal groups for right-angled buildings}
\author[J. Bossaert]{Jens Bossaert}
\author[T. De Medts]{Tom De Medts}
\email[Tom De Medts (corresponding author)]{tom.demedts@ugent.be}
\address[Address for both authors]{Ghent University \\ Department of Mathematics: Algebra and Geometry \\ Krijgslaan~281, S25, 9000 Ghent, Belgium}
\date{\today}
	\tikzset{
    	myvertex/.style = {circle, draw, fill, thick, outer sep=3\pgflinewidth, inner sep=0pt, minimum size=3.5pt}, 
    	mydoublevertex/.style = {myvertex, double, double distance=\pgflinewidth, minimum size=4pt-2\pgflinewidth, outer sep=4\pgflinewidth},
    	myedge/.style = {draw=black, thick, line cap=round, line join=round},
    	every label/.style = {font=\strut, label distance=0pt, outer sep=0pt, inner sep=0pt},
    	baseline = {([yshift=-.57ex]current bounding box.center)},
    	x=12mm, y=12mm,
    }
    		\let\on=\pgfdecorationsegmentlength%
    		\let\off=\pgfdecorationsegmentamplitude%
\let\rest=\pgfmathresult%
\let\onoff=\pgfmathresult%
\let\nfullonoff=\pgfmathresult%
\global\let\offexpand=\pgfmathresult%
    \newif\ifstartcompletesineup
    \newif\ifendcompletesineup
    					\pgfmathsetmacro\matchinglength{.5*\pgfdecoratedinputsegmentlength / (ceil(.5*\pgfdecoratedinputsegmentlength / \pgfdecorationsegmentlength))}
    					\pgfmathsetmacro\matchinglength{.5*\pgfdecoratedinputsegmentlength / (ceil(.5*\pgfdecoratedinputsegmentlength / \pgfdecorationsegmentlength) - 0.4999)}
    					\pgfmathsetmacro\matchinglength{.5*\pgfdecoratedinputsegmentlength / (ceil(.5*\pgfdecoratedinputsegmentlength / \pgfdecorationsegmentlength) - 0.4999)}
    					\pgfmathsetmacro\matchinglength{.5*\pgfdecoratedinputsegmentlength / (ceil(.5*\pgfdecoratedinputsegmentlength / \pgfdecorationsegmentlength) )}
    \definecolor{ugentblue}{RGB}{30,100,200}
    \definecolor{ugentyellow}{RGB}{255,210,0}
    \definecolor{ugentsecondary}{RGB}{190,81,144}
    \definecolor{ugentred}{RGB}{220,78,40}
    \definecolor{ugentgreen}{RGB}{113,168,96}
    \definecolor{mylightgray}{gray}{.94}
    \definecolor{mydarkgray}{gray}{.78}
    \tikzset{cheating dash/.code args={on #1 off #2}{
    	\tikzset{decoration={cheating dash, segment length=#1, amplitude=#2}, decorate}%
    	\csname tikz@addoption\endcsname{\pgfsetdash{{#1}{\offexpand}}{0pt}}}
    }
    \tikzset{
    	dotted/.style = {very thick, cheating dash = on 2000sp off 3.6pt},
    	dashed/.style = {cheating dash = on 3pt off 3pt},
    	-dots/.style = {postaction = {decoration = {markings, mark=at position 1 with {\node[anchor=180, transform shape, outer sep=2pt, inner sep=0pt] {\kern45000sp\textellipsis};}}, decorate}}
    }
    			\def\sourcecoordinate{\pgfpointanchor{\tikztostart}{center}}
    			\def\targetcoordinate{\pgfpointanchor{\tikztotarget}{center}}
    			\pgfmathanglebetweenpoints{\sourcecoordinate}{\targetcoordinate}
    			\edef\tempangle{\pgfmathresult}
    \newcommand\myblock[2][.95\textwidth]{%
    	\bgroup\centering\smallskip%
    	\tikz\node[myedge,draw=black,inner sep=3mm,rounded corners=3mm] {\parbox{#1}{#2}};%
    	\smallskip\par\egroup}
    \newcommand{\itemref}[2]{\cref{#1}\ref{#1:#2}}
\renewcommand\footnotemark{}
\newtheorem{theorem}{Theorem}[section]
\newtheorem*{theorem*}{Theorem}
\newtheorem{lemma}[theorem]{Lemma}
\newtheorem{proposition}[theorem]{Proposition}
\newtheorem{corollary}[theorem]{Corollary}
\newtheoremstyle{step}{\topsep}{\topsep}{\itshape}{0pt}{\itshape}{\@. }{0pt}{\thmname{#1}\thmnumber{ #2}\thmnote{ (#3)}}
\theoremstyle{step}
\theoremstyle{definition}
\newtheorem{definition}[theorem]{Definition}
\newtheorem{remark}[theorem]{Remark}
\DeclareMathOperator{\Sym}{Sym}
\DeclareMathOperator{\Aut}{Aut}
\DeclareMathOperator{\U}{\mkern2mu\mathcal{U}}
\DeclareMathOperator{\G}{\mkern1mu\mathcal{G}}
\newcommand{\@Fhack}{\@ifnextchar,{\mkern-2mu}{}}
\newcommand{\F}{\ensuremath\boldsymbol{F}\@Fhack}
\newcommand{\bL}{\ensuremath\boldsymbol{L}\@Fhack}
\newcommand{\Facute}{\ensuremath\boldsymbol{\smash{\acute F}}\@Fhack}
\newcommand{\Fwidehat}{\ensuremath\boldsymbol{\widehat F}\@Fhack}
\renewcommand{\P}{\ensuremath\mathcal{P}}
\renewcommand{\H}{\ensuremath\mathscr{H}}
\newcommand{\R}{\ensuremath\mathcal{R}}
\newcommand{\T}{\ensuremath\mathcal{T}}
\newcommand{\ball}{\ensuremath\operatorname{\mathsf{B}}}
\newcommand{\sphere}{\ensuremath\operatorname{\mathsf{S}}}
\DeclareMathOperator{\Res}{Res}
\DeclareMathOperator{\proj}{proj}
\DeclareMathOperator{\dist}{dist}
\newcommand{\subgroupindex}[2]{\bigl[ #1 \mathrel{:} #2\bigr]}
\newcommand\restrict[3][]{\mathchoice{{#2\bigr\rvert_{#3}^{#1}}}{{#2\rvert_{#3}^{#1}}}{{#2\rvert_{#3}^{#1}}}{{#2\rvert_{#3}^{#1}}}}
\newcommand\transversal[1][\@nil]{\def\tmp{#1}\ifx\tmp\@nnil\Upsilon\else\Upsilon_{\!#1}\fi}
\newcommand{\dircup}{\mathop{\overrightarrow{\bigcup}}\displaylimits}
\newcommand\acts{\mkern 2mu\relax.\mkern 2mu\relax}
\subjclass[2020]{51E24, 22F50, 22D05}
\keywords{right-angled buildings, universal groups, locally compact groups, simple groups}
\begin{document}

\begin{abstract}
    In 2000, Marc Burger and Shahar Mozes introduced universal groups acting on trees. Such groups provide interesting examples of totally disconnected locally compact groups. Intuitively, these are the largest groups for which all local actions satisfy a prescribed behavior.
    
    Since then, their study has evolved in various directions. In particular, Adrien Le Boudec has studied \emph{restricted} universal groups, where the prescribed behavior is allowed to be violated in a finite number of vertices. On the other hand, we have been studying universal groups acting on \emph{right-angled buildings}, a class of geometric objects with a much more general structure than trees.
    
    The aim of the current paper is to combine both ideas: we will study restricted universal groups acting on right-angled buildings. We show several permutational and topological properties of those groups, with as main result a precise criterion for when these groups are simple.
\end{abstract}

\maketitle


\section{Introduction}

In their seminal paper \cite{burgermozes2000}, Marc Burger and Shahar Mozes have introduced \emph{universal groups} acting on trees. More precisely, given any permutation group $F$ acting on a set $S$, there is a ``largest'' group $\U(F)$ acting on the regular tree of degree $|S|$ such that all local actions (i.e., the induced actions on the stars around a vertex and its image) belong to $F$.

If $S$ is finite (an assumption which is often, but certainly not always, imposed), then these universal groups are \emph{totally disconnected locally compact} (tdlc, for short) with respect to the permutation topology. They provide an interesting class of examples, and the study of and the interplay between their permutational and topological properties leads to a better understanding of tdlc groups; see, for instance, \cite{CDM11} and the more recent survey \cite{universal-survey}.

In 2016, Adrien Le Boudec \cite{leboudec} has generalized the notion to so-called \emph{restricted} universal groups---the terminology is due to \cite{caprace_dense}---by allowing for a finite number of \emph{singularities}, i.e., vertices where the local action does not belong to $F$. This gives rise to non-closed subgroups of the automorphism group of the tree, but with the correct topology, these groups are again tdlc groups.

In a different direction, we have been studying universal groups for \emph{right-angled buildings}. These geometric objects can be seen as higher-dimensional analogues of trees, but their combinatorics are governed by a (right-angled) Coxeter diagram, allowing for many different possible structures.
The starting point is our joint work with Ana C. Silva and Koen Struyve in \cite{silva1}, which has been continued in \cite{silva2,bossaert,cityproducts}.
This builds upon earlier results (for the full automorphism group) by Pierre-Emmanuel Caprace in~\cite{caprace2014}; in particular, he had shown that the full automorphism group of a right-angled building is always simple.

In the current paper, we apply Le Boudec's idea on the setting of locally finite right-angled buildings. We introduce restricted universal groups $\G(\F,\Facute)$ acting on a right-angled building $\Delta$ (of rank $n \geq 2$) with respect to ``local data'' $\F$ and $\Facute$, each of which consists of $n$ permutation groups. Again, with the correct topology, this gives rise to new examples of tdlc groups.

The combinatorics of the Coxeter diagram enter the picture in interesting ways. An important aspect is the occurrence of \emph{ladders} and their associated \emph{rungs} in the diagram, which are nodes for which the generators commute with (at least) two non-commuting generators; see \cref{def:ladder} below. The main result of our paper (\cref{thm:restrictedvirtsimple}), which gaves a good flavor for the kind of conditions that arise, is the following.

\begin{theorem*}
	Let $\Delta$ be a thick irreducible right-angled building over an index set $I$ with $\lvert I \rvert \geq 2$. Let $\F$ and $\Facute$ be the local data as in \cref{def:restricted2}. Assume that $F_i = \acute F_i$ for every $i\in I$ that is the type of a rung. Moreover, assume that not all local groups $\acute F_i$ are free.
	
	Then the restricted universal group $\G(\F,\Facute)$ is virtually simple if and only if $\acute F_i$ is generated by point stabilizers for every $i\in I$ and transitive for every $i$ in some vertex cover of the diagram of $\Delta$.
\end{theorem*}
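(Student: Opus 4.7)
The plan is to follow the well-established scheme for proving (virtual) simplicity of universal-type groups, combining Le Boudec's argument in the tree setting with the combinatorial tools---wings, ladders, rungs, residues---developed earlier in this series for universal groups on right-angled buildings. The first step is to identify a canonical finite-index normal subgroup $N \trianglelefteq \G(\F,\Facute)$, most naturally defined as the intersection of $\G(\F,\Facute)$ with a ``type-parity'' kernel coming from the Coxeter structure, and to reduce virtual simplicity of $\G(\F,\Facute)$ to abstract simplicity of $N$.

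For the ``only if'' direction I would argue by contraposition. If some $\acute F_i$ is not generated by its point stabilizers, then the quotient of $\acute F_i$ by the subgroup $\langle \Stab_{\acute F_i}(s) : s \in S_i\rangle$ is a nontrivial discrete group; the local-action recipe then yields a continuous homomorphism from $\G(\F,\Facute)$ onto (a restricted product over singularities of) this quotient, whose kernel is a closed normal subgroup of infinite index and thus precludes virtual simplicity. If on the other hand no vertex cover of the diagram consists of indices with $\acute F_i$ transitive, then by a standard graph argument the set of types with $\acute F_i$ intransitive dominates some independent set of nodes, and the $\acute F_i$-orbits on the corresponding panels can be assembled into a $\G(\F,\Facute)$-equivariant ``coloring'' of residues; the kernel of the induced action on colorings is again a proper closed normal subgroup of infinite index.

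For the ``if'' direction we assume both conditions together with $F_i = \acute F_i$ on rung types and the existence of at least one non-free $\acute F_i$, and show that $N$ is simple. The argument is a commutator bootstrap in the spirit of Tits and Caprace: given a nontrivial normal subgroup $M \trianglelefteq N$ and $1 \neq g \in M$, one exploits the Tits-type independence across disjoint wings of commuting residues to form a commutator $[g,h]\in M$ whose support is contained in an arbitrarily small wing. The hypothesis that each $\acute F_i$ is generated by point stabilizers then provides, through further commutators with stabilizer elements, nontrivial elements of $M$ fixing arbitrarily large residues. Transitivity of $\acute F_i$ on a vertex cover propagates these elements across the whole building via conjugation, while non-freeness of some $\acute F_i$ ensures that the initial commutators produced are genuinely nontrivial. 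Combining these steps yields $M = N$, hence simplicity of $N$.

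The principal technical obstacle is the interaction between the finite singularity set permitted by the restricted construction and the ladder/rung combinatorics of the building. In a tree, where ladders do not occur, Le Boudec's commutator argument never has to propagate singularities along commuting directions; in a right-angled building, a singular chamber constrains the local behavior along every rung of every ladder it meets, and the hypothesis $F_i = \acute F_i$ for rung types $i$ is precisely what makes these constraints vacuous, so that singularities can be carried along commuting directions without conflict. Executing the commutator bootstrap while carefully tracking the position and local behavior of singularities across ladders---and verifying that each conjugation and commutation step lands back in $\G(\F,\Facute)$---is where the bulk of the technical work is expected to lie.
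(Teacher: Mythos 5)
There is a genuine gap, and it lies at the very start of your plan: you posit ``a canonical finite-index normal subgroup $N$'' (an intersection with a ``type-parity kernel'') and reduce the theorem to proving that this $N$ is simple by a commutator bootstrap. But in this setting the natural simple normal subgroup is the monolith $N=\langle\G_{(\T)}\rangle$ generated by all tree-wall fixators, and its simplicity holds \emph{unconditionally} (given irreducibility and non-freeness of the local groups; this is \cref{prop:restricteduniversalmonolithic}, proved via independence and the wing-commutator argument you sketch). The conditions on $\Facute$ (generation by point stabilizers, transitivity on a vertex cover) are \emph{not} what makes $N$ simple --- they are exactly what is needed to make $N$ have \emph{finite index}, and that is the part your proposal assumes for free. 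There is no a priori finite-index candidate: when the conditions fail, the simple monolith has infinite index and $\G(\F,\Facute)$ is not virtually simple, so any argument that starts from a finite-index $N$ and never uses the hypotheses at that step cannot be correct. The paper's ``if'' direction instead proves finite index topologically: by \cref{prop:restrictedclosure} (this is where the rung hypothesis $F_i=\acute F_i$ enters, not in carrying singularities through a commutator computation), $\G(\F,\Facute)$ is dense in $\U(\Facute)$, which is simple by \cref{thm:Usimple}; hence the monolith $N$ is dense in $\U(\Facute)$ and in $\G$, so $\G=\G_{\{\P\}}\cdot N$, and combining regional compactness of panel stabilizers (\cref{prop:panelstabregionallycompact}) with compact generation (\cref{cor:restrictedcompgen}) gives $\G=H_n\cdot N$ with $H_n$ compact open, so $N$ is open and cocompact, hence of finite index.

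Your ``only if'' direction is also not substantiated. The proposed ``continuous homomorphism onto a restricted product over singularities'' of $\acute F_i/\langle\text{point stabilizers}\rangle$ is not well defined: the singularity set depends on the element and moves under composition, so there is no fixed index set and no evident multiplicativity; similarly, the ``equivariant coloring of residues'' in the intransitive case is only named, not constructed. The paper's route is cleaner and uses the monolith again: a simple finite-index subgroup must equal the monolith, so the monolith has finite index; then $A=\U(\Facute)^+\cap\G$ is a nontrivial (by density plus openness of $\U(\Facute)^+$) normal subgroup, hence of finite index, and passing to closures shows $\U(\Facute)^+$ has finite index in $\U(\Facute)$, at which point the permutational characterisation of \cref{prop:genbychamstab} delivers exactly the stated conditions on $\Facute$. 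In short: the commutator/independence machinery you describe is indeed present in the paper, but only as input to the monolith result; the heart of the theorem --- converting the hypotheses on $\Facute$ into the finite-index statement via density, compact generation and regionally compact panel stabilizers --- is missing from your proposal.
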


Along the way, we show several other permutational and topological properties for these restricted universal groups.

We should point out that this is (hopefully) not the end of the story. Indeed, in \cite{leboudec}, these restricted universal groups for trees are studied more deeply, in particular with regard to their asymptotic dimension and the (non-)existence of lattices. It is not unlikely that our new class of tdlc groups admits a similar behavior as the groups introduced and studied by Le Boudec.

\subsection*{Acknowledgment.}

The first author has been supported by the UGent BOF PhD mandate BOF17/DOC/274.

\subsection*{Outline of the paper.}

In \cref{sec:pre}, we recall the required background for universal groups for right-angled buildings from \cite{silva1,bossaert,cityproducts}. This includes \emph{colorings}, \emph{universal groups}, \emph{parallel} residues, and the \emph{permutation topology} on right-angled buildings.

We then need a fairly long \cref{sec:panel-closed} to show an extension result starting from a \emph{panel-closed} subset of a right-angled building. This requires generalizing some earlier results from \cite{silva1} (such as the \emph{Closing Squares Lemmas} and the existence of \emph{concave galleries}) to a more general setting. Our main result in that section is \cref{prop:evenmoretechnicalextensionstuff}.

We then proceed to the \emph{restricted universal groups}, which form the main subject of this paper. In \cref{sec:restrunivdef}, we first define the groups $\G(\F)$ in \cref{def:restricted1} and point out the relevance of the existence of \emph{ladders} (and their \emph{rungs}) in the building, and we then proceed to the more general groups $\G(\F, \Facute)$ in \cref{def:restricted2}.

We continue in \cref{sec:permutational} to study some of their permutational properties. Using the results from \cref{sec:panel-closed}, we can produce an interesting generating set for the restricted universal groups; this is the content of \cref{prop:restrictedcompgen}.

In \cref{sec:topological}, we then come to the topological structure on $\G(\F, \Facute)$, making it into a tdlc group that is dense inside the usual universal group $\U(\Facute)$ (under some natural assumptions on the local data). Relying on the results from \cref{sec:permutational}, we show that the groups are always compactly generated; this is \cref{cor:restrictedcompgen}.

In the final \cref{sec:simplicity}, we then study the simplicity question. This first requires a careful analysis of when an action of a group on a right-angled building is \emph{combinatorially dense}. We then proceed to show that the groups $\G(\F,\Facute)$ are almost always \emph{monolithic} and have a \emph{simple monolith} (\cref{prop:restricteduniversalmonolithic}). This then leads us to our final simplicity result (\cref{thm:restrictedvirtsimple}) that we already mentioned above.


\section{preliminaries: Universal groups for right-angled buildings}\label{sec:pre}

We will adopt the conventions and notations from \cite{cityproducts}. In particular, $I$~will always be the index set of a Coxeter system $M$, which we will assume to be \emph{right-angled} throughout the paper, i.e., all entries in the Coxeter matrix are equal to $1$, $2$ or $\infty$.

To introduce universal groups for right-angled buildings, we follow the approach from \cite{silva1}.
Our starting point is the well known fact, attributed to Fr\'ed\'eric Haglund and Fr\'ed\'eric Paulin \cite{haglundpaulin}, that for each choice of parameters $(q_i)_{i \in I}$, there exists a unique semiregular right-angled building $\Delta$ of type $M$ with these parameters (i.e., each $i$-panel contains precisely $q_i$ chambers).
In addition, such a building $\Delta$ admits an (essentially unique) \emph{legal coloring}.

\begin{definition}[\cite{silva1}]
	\label{def:coloring}
	Let $\Delta$ be a semiregular right-angled building of type~$M$ over $I$ with parameters $(q_i)_{i \in I}$.
	For each $i\in I$, consider a set $\Omega_i$ of cardinality~$q_i$, the elements of which we call \emph{$i$-colors} or \emph{$i$-labels}.
	A \emph{legal coloring} of~$\Delta$ (with color sets $\Omega_i$) is a map
	\[ \lambda\colon \Delta\to\prod_{i\in I} \Omega_i\colon c\mapsto(\lambda_i(c))_{i\in I} \]
	satisfying the following properties for every $i\in I$ and for every $i$-panel $\P$:
	\begin{enumerate}
		\item the restriction $\restrict{\lambda_i}{\P}\colon \P\to\Omega_i$ is a bijection;
		\item for every $j\neq i$, the restriction $\restrict{\lambda_j}{\P}\colon \P\to\Omega_j$ is a constant map.
	\end{enumerate}
    By \cite[Proposition 2.44]{silva1}, such a legal coloring is essentially unique.
\end{definition}

\myblock{%
    From now on, we will always assume that
    \begin{itemize}\setlength{\itemindent}{-3ex}
        \item $M$ is a right-angled Coxeter system over an index set $I$,
        \item $\Delta$ is a semiregular right-angled building of type $M$ with parameters $(q_i)_{i\in I}$,
        \item each $\Omega_i$ is a set of cardinality $q_i$,
        \item $\lambda$ is a legal coloring of $\Delta$ with color sets $\Omega_i$.
    \end{itemize}
}

\begin{definition}
	\label{def:universal}
    \begin{enumerate}
        \item 
            Consider an automorphism $g\in\Aut(\Delta)$ and an arbitrary $i$-panel $\P$. Then we define the \emph{local action of $g$ at $\P$} as the map
        	\[\sigma_\lambda(g,\P) = \restrict{\lambda_i}{g\P} \circ \restrict{g}{\P} \circ \restrict[-1]{\lambda_i}{\P}, \]
        	which is a permutation of $\Omega_i$ by definition of $\lambda$.
        	In other words, the local action $\sigma_\lambda(g,\P)$ is the map that makes the following diagram commute.
        	\[\begin{tikzcd}[dims={7em}{4em}]
        		\P \ar[r,"g"] \ar[d,"\lambda_i"]
        			& g\P \ar[d,"\lambda_i"]\\
        		\Omega_i \ar[r,"{\sigma_\lambda(g,\,\P)}"] & \Omega_i
        	\end{tikzcd}\]
        \item 
        	Let $\F$ be a collection of permutation groups $F_i\leq\Sym(\Omega_i)$, indexed by $i\in I$. The \emph{universal group} of $\F$ over $\Delta$ (with respect to $\lambda$) is the group
        	\[
        	   \U(\F) = \bigl\{g\in\Aut(\Delta) \bigm| \sigma_\lambda(g,\P)\in F_i  \text{ for each $i\in I$ and each $\P\in\Res_i(\Delta)$}\bigr\},
        	\]
        	i.e., $\U(\F)$ is the group of automorphisms that locally act like permutations in $F_i$. We call the groups $F_i$ the \emph{local groups} and we refer to the collection $\F$ as the \emph{local data} for $\U(\F)$.
        	We will sometimes explicitly include the building $\Delta$ in the notation and write $\U_\Delta(\F)$ instead.
    \end{enumerate}
\end{definition}

\begin{lemma}
	\label{lem:localcomposition}
	Let $g,h\in\Aut(\Delta)$ and let $\P$ be any panel. Then the local actions satisfy
	\[\sigma_\lambda(gh,\P) = \sigma_\lambda(g,h\P)\cdot\sigma_\lambda(h,\P)
		\qquad\text{and}\qquad
		\sigma_\lambda(g,\P)^{-1} = \sigma_\lambda(g^{-1},g\P).\]
\end{lemma}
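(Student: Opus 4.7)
The plan is to prove both identities by direct computation from the definition
\[\sigma_\lambda(g,\P) = \restrict{\lambda_i}{g\P} \circ \restrict{g}{\P} \circ \restrict[-1]{\lambda_i}{\P},\]
since no deeper machinery is needed. The only preliminary observation is that if $\P$ is an $i$-panel, then so is $h\P$ for any $h \in \Aut(\Delta)$, because type-preservation is part of the definition of an automorphism of $\Delta$; this ensures that all occurrences of $\lambda_i$ below are applied to panels on which they are bijective, matching the statement in \cref{def:coloring}.

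For the first identity, I would simply expand both sides. On the right, substituting the definitions gives
\[ \sigma_\lambda(g,h\P) \cdot \sigma_\lambda(h,\P) = \Bigl( \restrict{\lambda_i}{gh\P} \circ \restrict{g}{h\P} \circ \restrict[-1]{\lambda_i}{h\P} \Bigr) \circ \Bigl( \restrict{\lambda_i}{h\P} \circ \restrict{h}{\P} \circ \restrict[-1]{\lambda_i}{\P} \Bigr), \]
and the middle two factors $\restrict[-1]{\lambda_i}{h\P} \circ \restrict{\lambda_i}{h\P}$ cancel as the identity on $h\P$. The remaining composition is $\restrict{\lambda_i}{gh\P} \circ \restrict{gh}{\P} \circ \restrict[-1]{\lambda_i}{\P}$, which is precisely $\sigma_\lambda(gh,\P)$.

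For the second identity, I would deduce it as a corollary of the first rather than reprove it from scratch. First observe directly that $\sigma_\lambda(\id, \P) = \id_{\Omega_i}$, since all three factors $\restrict{\lambda_i}{\P}$, $\restrict{\id}{\P}$ and $\restrict[-1]{\lambda_i}{\P}$ compose to the identity on $\Omega_i$. Then apply the first identity with the pair $(g^{-1}, g)$ at the panel $\P$:
\[ \id_{\Omega_i} = \sigma_\lambda(g^{-1}g, \P) = \sigma_\lambda(g^{-1}, g\P) \cdot \sigma_\lambda(g, \P), \]
which rearranges to $\sigma_\lambda(g,\P)^{-1} = \sigma_\lambda(g^{-1}, g\P)$, as required.

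There is no real obstacle here; the entire argument is a book-keeping exercise in commutative diagrams. The only point where I would be slightly careful is in checking that the type of the panel is preserved under $g$ and $h$, so that a single color index $i$ suffices throughout the calculation and the factors $\restrict{\lambda_i}{\,\cdot\,}$ are genuinely bijective in the places where we invert them.
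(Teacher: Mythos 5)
Your proof is correct and is exactly the computation the paper has in mind when it says the result ``follows immediately from the definition''; the cancellation of $\restrict[-1]{\lambda_i}{h\P} \circ \restrict{\lambda_i}{h\P}$ is the whole content, and deriving the inverse formula from the composition formula via $\sigma_\lambda(\id,\P)=\id$ is a clean way to avoid a second expansion.
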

\begin{proof}
	This follows immediately from the definition.
\end{proof}

The following extension result is useful.
\begin{proposition}\label{prop:ext-res}
    Let $\R$ be a residue of $\Delta$ of type $J \subseteq I$. Let $g \in \U_\R(\restrict{\F}{J})$. Then $g$ extends to an element of $\U_\Delta(\F)$.
\end{proposition}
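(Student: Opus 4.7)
The plan is to construct an explicit extension $\tilde g \in \Aut(\Delta)$ that acts as $g$ on $\R$ and fixes the color coordinate $\lambda_i$ of every chamber of $\Delta$ for all $i \in I \setminus J$. Such a $\tilde g$ automatically satisfies $\sigma_\lambda(\tilde g, \P) = \id \in F_i$ on every panel $\P$ of type $i \notin J$, so the local data condition reduces to controlling the local actions at panels of type $i \in J$.

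For the construction, given $d \in \Delta$, set $c = \proj_\R(d)$. The gate property, combined with the parallel residue property of right-angled buildings, forces every minimal gallery from $c$ to $d$ to use only types in $I \setminus J$: indeed, every intermediate chamber in such a gallery must also project to $c$, and this rules out any step of type $s \in J$ by injectivity of $\proj_\R$ on parallel $s$-panels. Fixing such a minimal gallery $c = d_0 \xrightarrow{s_1} d_1 \xrightarrow{s_2} \cdots \xrightarrow{s_n} d_n = d$ with color sequence $\omega_k = \lambda_{s_k}(d_k)$, I would define $\tilde g(d)$ to be the endpoint of the unique gallery starting at $g(c)$ with the same type sequence $(s_1,\ldots,s_n)$ and color sequence $(\omega_1,\ldots,\omega_n)$. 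Well-definedness follows because two minimal galleries between the same pair of chambers in a right-angled building differ only by commutations of adjacent steps of commuting types, and the legal coloring is invariant under such swaps. A symmetric construction using $g^{-1}$ yields a two-sided inverse of $\tilde g$, and adjacency preservation follows from a short case analysis: if the adjacency type $s$ lies in $I \setminus J$ the projections of the two chambers coincide and the $(I \setminus J)$-colors are preserved by construction; if $s \in J$ the two projections are $s$-adjacent in $\R$ and the color-preserving parallel residue bijection transports the adjacency to the image side.

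It remains to verify $\sigma_\lambda(\tilde g, \P) \in F_i$ at every $i$-panel $\P$. For $i \notin J$ the local action is the identity by construction; for $i \in J$ with $\P \subseteq \R$ it equals $\sigma_\lambda(g, \P) \in F_i$ by hypothesis; for $i \in J$ with $\P \not\subseteq \R$ the panel $\P$ is parallel to a unique $i$-panel $\P' \subseteq \R$ via a color-preserving bijection $\proj_\R$, and the same holds for $\tilde g(\P) \to \tilde g(\P')$, so chasing colors yields $\sigma_\lambda(\tilde g, \P) = \sigma_\lambda(g, \P') \in F_i$. The main technical hurdle is the well-definedness of the gallery-transport construction of $\tilde g$, which hinges on the interplay between minimal galleries, right-angled commutation relations, and the legal coloring; these ingredients are standard in the framework of \cite{silva1}, so the proof amounts to carefully unpacking the gallery setup and checking the three cases above.
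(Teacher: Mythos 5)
There is a genuine gap at the very first step of your construction. From the gate property you correctly deduce that every chamber on a minimal gallery from $c=\proj_\R(d)$ to $d$ again projects to $c$, but the conclusion you draw from this --- that such a gallery uses only types in $I\setminus J$ --- is false. The gate property only says that the Weyl distance $\delta(c,d)$ is reduced on the left with respect to $W_J$ (no reduced expression \emph{starts} with a letter of $J$); it does not force the word to avoid letters of $J$ altogether, and it cannot, as soon as some $j\in J$ fails to commute with some $t\notin J$. Concretely, take $I=\{1,2\}$ with $m_{12}=\infty$ (a tree), $J=\{1\}$, $\R$ a $1$-panel containing $c$, and $d$ with $\delta(c,d)=s_2s_1$: then $c=\proj_\R(d)$, yet the (unique) minimal gallery from $c$ to $d$ contains a step of type $1\in J$. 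The step where you invoke ``injectivity of $\proj_\R$ on parallel $s$-panels'' is exactly where this breaks: the $s$-panel through two consecutive chambers of the gallery need not be parallel to any panel of $\R$ (in the example its projection to $\R$ is the single chamber $c$), so nothing prevents both endpoints from projecting to $c$. The same confusion reappears in your final verification, where you assert that every $i$-panel with $i\in J$ not contained in $\R$ is parallel to a panel of $\R$; by \itemref{prop:parallelinrab}{2} this holds only for panels inside the residue of type $\{i\}\cup i^\perp$ meeting $\R$, and in general $\proj_\R(\P)$ is a single chamber.

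Since both the definition of $\tilde g$ (transport of $(I\setminus J)$-type galleries from the projection) and the local-action check rest on these false structural claims, the argument does not go through as written. The underlying idea --- extend $g$ so that, away from $\R$, colors are transported and local actions stay trivial, hence in $F_i$ --- is in the right spirit, and is close to how the extension is actually built in \cite{silva1} (the paper simply cites \cite[Proposition 3.15]{silva1} here). But a correct execution has to deal with galleries whose type involves $J$, e.g.\ by working wing by wing (using the decomposition of $\Delta$ into $J$-wings $X_J(c)$, $c\in\R$, or a ball-by-ball induction as in \cref{prop:evenmoretechnicalextensionstuff}), including a genuine proof that the resulting map is an automorphism and that its local actions at $i$-panels with $i\in J$ outside $\R$ lie in $F_i$; none of that can be reduced to the commutation-invariance of color sequences alone.
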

\begin{proof}
    This is \cite[Proposition 3.15]{silva1}.
\end{proof}

The following definition is taken from \cite[Definition 2.17]{bossaert}.
\begin{definition}\label{def:harmonious}
    Let $\F$ be local data over $I$ and let $J \subseteq I$.
    Two $J$-residues $\R$ and $\R'$ are called \emph{harmonious} (with respect to $\F$) if for each $k \in I \setminus J$, the (well-defined) colors $\lambda_k(\R)$ and $\lambda_k(\R')$ lie in the same orbit of the local group $F_k$.
\end{definition}
We will use this concept mostly when $\R$ and $\R'$ are chambers (so $J = \emptyset$) or when $\R$ and $\R'$ are panels (so $J = \{ j \}$ is a singleton).
\begin{proposition}\label{prop:universalorbits}
    Two residues $\R$ and $\R'$ lie in the same orbit of $\U(\F)$ if and only if they are harmonious.
\end{proposition}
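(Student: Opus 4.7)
For the ``only if'' direction, suppose $g\R = \R'$ with $g \in \U(\F)$, and fix any chamber $c \in \R$. For every $k \in I \setminus J$, the $k$-color $\lambda_k$ is constant on $J$-residues (since any $J$-gallery uses only $j$-panels with $j \in J$), so $\lambda_k(\R) = \lambda_k(c)$ and $\lambda_k(\R') = \lambda_k(gc)$. The local action $\sigma_\lambda(g, \P) \in F_k$ at any $k$-panel $\P$ through $c$ sends $\lambda_k(c)$ to $\lambda_k(gc)$, so these two colors lie in the same $F_k$-orbit, and $\R, \R'$ are harmonious.

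For the converse, I would first reduce to the chamber case. A $J$-residue is itself a semiregular right-angled building of type $M_J$, and by moving successively through $j$-panels one sees that the $J$-colors $(\lambda_j)_{j \in J}$ attain every tuple in $\prod_{j \in J} \Omega_j$ on such a residue. I may therefore choose $c \in \R$ and $c' \in \R'$ with $\lambda_j(c) = \lambda_j(c')$ for all $j \in J$; combined with the harmony of $\R, \R'$, this makes $c$ and $c'$ harmonious chambers. Any $g \in \U(\F)$ with $gc = c'$ then automatically sends the unique $J$-residue through $c$ (namely $\R$) to the unique $J$-residue through $c'$ (namely $\R'$), so it suffices to handle the chamber case.

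For harmonious chambers $c, c'$ I would construct $g$ by iterative extension. For each $i \in I$, harmony provides $\sigma_i \in F_i$ with $\sigma_i(\lambda_i(c)) = \lambda_i(c')$. Setting $g(c) = c'$ and declaring the local action at each $i$-panel through $c$ to equal $\sigma_i$ determines $g$ on all chambers adjacent to $c$; \cref{prop:ext-res} applied to each such panel realizes this move as an element of $\U(\F)$. A direct check shows that every new image $g(c'')$ is still harmonious with $c''$: the $i$-color of $c''$ is moved within an $F_i$-orbit by $\sigma_i$, while every other $k$-color of $c''$ equals $\lambda_k(c)$ and is mapped to $\lambda_k(c')$, so harmony is preserved. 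The same construction is then iterated at each newly handled chamber: at every fresh panel encountered, one picks a local action in the appropriate $F_i$ compatible with the partial $g$ already defined.

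The main obstacle is global consistency: whenever a chamber is reached from $c$ along two different galleries, the images prescribed by the two routes must agree. In a right-angled Coxeter system, any two reduced expressions for a given Weyl element differ by a sequence of commutations of commuting letters, which correspond geometrically to the commuting squares in $\Delta$; compatibility therefore reduces to verifying agreement on such squares, which can be arranged by coherent choices of the $\sigma_\P$---essentially a closing-squares argument. Equivalently, one can organize the proof as an induction on $|I|$, with base case $|I| = 1$ immediate (since $\U(\F)$ then acts on the single panel as $F_i$ on $\Omega_i$) and the inductive step using \cref{prop:ext-res} to lift automorphisms from the relevant $(I \setminus \{i\})$-residues, supplemented by $i$-panel moves to transit between different such residues.
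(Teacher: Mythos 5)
Your ``only if'' direction and your reduction of the ``if'' direction to the chamber case are both fine: $\lambda_k$ for $k\notin J$ is indeed constant on a $J$-residue, the local action at a $k$-panel through $c$ sends $\lambda_k(c)$ to $\lambda_k(gc)$ inside an $F_k$-orbit, and one can match the $J$-colors of representative chambers $c\in\R$, $c'\in\R'$ so that harmoniousness of the residues becomes harmoniousness of the chambers. (The paper itself does not reprove the statement; it cites \cite[Proposition 3.1]{bossaert}, whose content is exactly the chamber-transitivity step you are left with.)

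That remaining step is where your argument has a genuine gap: the assertion that global consistency ``can be arranged by coherent choices of the $\sigma_\P$ --- essentially a closing-squares argument'' is precisely the theorem, not a proof of it. Declaring local actions $\sigma_i\in F_i$ at the panels through $c$ and then ``iterating at each newly handled chamber'' does not obviously produce a well-defined automorphism: the same panel is approached along many galleries, the local action at a panel far from $c$ is already forced by all previously defined images (in particular, parallel panels must receive \emph{identical} local actions, cf.\ \cref{prop:parallellocals}, and in a right-angled building a single tree-wall can contain infinitely many panels), and checking agreement only on commuting squares of reduced words does not by itself show that the resulting map is a bijective, adjacency-preserving automorphism with all local actions in $\F$. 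The actual proof in the cited reference carries out a careful induction on the balls $\ball_n(c)$, using the closing-squares lemmas (\cref{lem:closingsquares}) and \cref{lem:technicalprojectionstuff} to verify at each stage that the defect panels can be corrected by elements with local actions in $F_i$, and that harmoniousness of images propagates; none of this bookkeeping is carried out in your sketch. A clean way to close the gap within the present paper is to note that $C=\{c\}$ is panel-closed, take any $f\in\Aut(\Delta)$ with $f(c)=c'$ (chamber-transitivity of $\Aut(\Delta)$ for semiregular right-angled buildings, Haglund--Paulin), and apply \cref{prop:evenmoretechnicalextensionstuff}: since $C$ contains no panel, the resulting $h$ has all local actions in $\F$, i.e.\ $h\in\U(\F)$ and $h(c)=c'$ --- but that proposition is itself proved by exactly the inductive argument you skipped, so invoking it (or the cited Proposition 3.1) is needed; as written, your proof of the key implication is incomplete.
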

\begin{proof}
    This is \cite[Proposition 3.1]{bossaert}.
\end{proof}

\medskip

We will now have a closer look at local actions on \emph{parallel residues}, a notion introduced by Pierre-Emmanuel Caprace in \cite{caprace2014}.

\begin{definition}
\begin{enumerate}
    \item 
        If $\R_1$ and $\R_2$ are residues of $\Delta$, we set
        \[\proj_{\R_2}(\R_1) = \{\proj_{\R_2}(c)\mid c\in\R_1\}.\]
        Note that this set is again a residue of $\Delta$ contained in $\R_2$, and that its rank is bounded by the ranks of both $\R_1$ and $\R_2$.
    \item 
    	Two residues $\R_1$ and $\R_2$ of $\Delta$ are called \emph{parallel} if $\proj_{\R_1}(\R_2) = \R_1$ and $\proj_{\R_2}(\R_1) = \R_2$. In that case, the projection maps define bijections between $\R_1$ and $\R_2$.
    \item 
    	Let $J\subseteq I$. Then we define the set
    	\[J^\perp = \{i\in I\mid \text{$m_{ij}=2$ for all $j\in J$}\} = \{i\in I\setminus J \mid \text{$ij=ji$ for all $j\in J$}\}.\]
    	When $J=\{j\}$ is a singleton, we simply write $j^\perp := \{j\}^\perp$.
\end{enumerate}
\end{definition}

\begin{proposition}
	\label{prop:parallelinrab}
	Let $\Delta$ be a right-angled building over $I$.
	\begin{enumerate}
		\item\label{prop:parallelinrab:1} Two parallel residues have equal type.
		\item\label{prop:parallelinrab:2} Two residues of equal type $J$ are parallel if and only if they are contained in a common residue of type $J\cup J^\perp$.
		\item\label{prop:parallelinrab:3} Parallelism of residues is an equivalence relation.
		\item\label{prop:parallelinrab:4} Two panels $\P_1$ and $\P_2$ are parallel if and only if there exist two chambers in $\P_1$ with distinct projections on $\P_2$.
	\end{enumerate}
\end{proposition}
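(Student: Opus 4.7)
The plan is to treat (i) and (iv) via general projection arguments that hold in any building, to prove (ii) using the right-angled structure, and to derive (iii) as a consequence of (ii).

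For (i), I would invoke the general fact that if $c,c'$ are $i$-adjacent chambers of $\R_1$, then $\proj_{\R_2}(c)$ and $\proj_{\R_2}(c')$ are either equal or $i$-adjacent in $\R_2$. Parallelism forces $\proj_{\R_2}|_{\R_1}$ to be a bijection, so no $i$-adjacency in $\R_1$ can collapse; hence the type of $\R_1$ is contained in that of $\R_2$, and by symmetry the two types coincide.

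The core of the proposition is (ii), which is specific to right-angled buildings. For the easy direction, if $\R_1$ and $\R_2$ are $J$-residues inside a common $(J\cup J^\perp)$-residue $\mathcal{S}$, then since every letter in $J^\perp$ commutes with every letter in $J$, one checks directly that $\proj_{\R_1}$ and $\proj_{\R_2}$ restrict to mutually inverse bijections between $\R_1$ and $\R_2$ (the $J^\perp$-coordinate of any chamber of $\mathcal{S}$ is preserved by these projections). For the converse, I would pick a chamber $c_1\in\R_1$ together with its gate $c_2=\proj_{\R_2}(c_1)$, and a minimal gallery $\Gamma$ from $c_1$ to $c_2$. The key claim, which I expect to be the main obstacle, is that the word type of $\Gamma$ uses only letters in $J^\perp$: if some letter $j\in J$ appeared, one could prepend or absorb a $j$-step on the $\R_1$-side to contradict either minimality of $\Gamma$ or the assumption that $\proj_{\R_1}(\R_2)=\R_1$. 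This is precisely where the right-angled hypothesis is essential, as it forces two non-commuting letters to have infinite order product. Granting the claim, $\R_1$ and $\R_2$ both sit inside the (unique) $(J\cup J^\perp)$-residue containing $\Gamma$.

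Part (iii) then follows quickly: reflexivity and symmetry are immediate, and for transitivity, if $\R_1\parallel\R_2$ and $\R_2\parallel\R_3$, all three have a common type $J$ by (i), and by (ii) there exist $(J\cup J^\perp)$-residues $\mathcal{S}\supseteq\R_1\cup\R_2$ and $\mathcal{S}'\supseteq\R_2\cup\R_3$. Since residues of a fixed type through a fixed chamber are unique, $\mathcal{S}=\mathcal{S}'$, so $\R_1$ and $\R_3$ share a $(J\cup J^\perp)$-residue and are parallel by (ii). Finally, for (iv), the forward implication is immediate since parallelism forces $\proj_{\P_2}|_{\P_1}$ to be a bijection; for the converse, two chambers $c\neq c'$ in an $i$-panel $\P_1$ with distinct projections in $\P_2$ force $\P_2$ to also be an $i$-panel (by the same adjacency-preservation lemma used in (i)), so $\proj_{\P_2}(\P_1)$ is a subresidue of the rank-$1$ residue $\P_2$ containing at least two chambers and therefore equals $\P_2$; by symmetry of the argument applied in the opposite direction one obtains $\proj_{\P_1}(\P_2)=\P_1$ as well, yielding parallelism.
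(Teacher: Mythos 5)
The paper itself offers no argument here: it simply refers to \cite{caprace2014} (Lemma 2.5, Proposition 2.8, Corollary 2.9), so your proposal has to stand on its own. Its architecture is the right one --- (i) and the forward half of (iv) from the fact that projections send $i$-adjacent chambers to equal or $i$-adjacent chambers, (iii) as a formal consequence of (i) and (ii), and (ii) as the genuinely right-angled core --- but the justification of the key claim in the converse of (ii) has a real gap. You must show that the type of a minimal gallery from $c_1$ to $c_2=\proj_{\R_2}(c_1)$ lies in $J^\perp$, yet your argument only contemplates the appearance of a letter $j\in J$, to be excluded by ``prepending or absorbing a $j$-step''. This leaves untouched the actual difficulty: letters $k\notin J\cup J^\perp$, i.e.\ types with $m_{jk}=\infty$ for some $j\in J$, and these are precisely the letters that parallelism must rule out (for non-parallel residues the connecting gallery can consist entirely of such letters while being minimal and compatible with the gate property). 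Moreover, even for letters of $J$, minimality plus the gate property is not enough: in the infinite dihedral group with $J=\{s\}$ the element $tst$ is $(J,J)$-reduced and contains $s$; what forbids this for parallel residues is the bijectivity of the projection maps, which your sketch never actually brings to bear on the gallery. A correct argument at this point either inducts along the gallery using that bijectivity (in the spirit of the closing-squares lemmas, essentially Caprace's route), or records that parallelism forces the $(J,J)$-reduced element $w=\delta(c_1,c_2)$ to conjugate $J$ onto $J$, and then invokes right-angled Coxeter theory (distinct generators are never conjugate, and an element centralizing every $s\in J$ and reduced with respect to $J$ lies in $W_{J^\perp}$) --- a true but nontrivial input that your ``prepend or absorb'' step does not replace.

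A smaller gap of the same kind occurs in the converse of (iv): having shown $\proj_{\P_2}(\P_1)=\P_2$, you claim $\proj_{\P_1}(\P_2)=\P_1$ ``by symmetry'', but nothing in your argument produces two chambers of $\P_2$ with distinct projections on $\P_1$. The standard way to close this is the reciprocity statement, valid in any building and part of what the paper cites from \cite{caprace2014}, that $\proj_{\P_1}(\P_2)$ and $\proj_{\P_2}(\P_1)$ are always parallel; since $\proj_{\P_2}(\P_1)=\P_2$ has at least two chambers, so does $\proj_{\P_1}(\P_2)$, which is therefore all of the panel $\P_1$. With that lemma added, and with the $J^\perp$ claim in (ii) properly proved, the rest of your outline (the product-decomposition argument for the easy direction of (ii), and the deduction of (iii) via uniqueness of the $(J\cup J^\perp)$-residue through a chamber of $\R_2$) is sound.
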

\begin{proof}
	We refer to \cite[Lemma 2.5, Proposition 2.8 and Corollary 2.9]{caprace2014}.
\end{proof}

%

\begin{proposition}
	\label{prop:parallellocals}
	Let $g \in \Aut(\Delta)$. If\, $\P$ and $\P'$ are two parallel panels in $\Delta$, then the local actions $\sigma_\lambda(g,\P)$ and $\sigma_\lambda(g,\P')$ are identical.
\end{proposition}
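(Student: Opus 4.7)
The plan is to show that parallel $i$-panels are matched up in a $\lambda_i$-preserving way by their projection maps, and then exploit that automorphisms commute with projections.

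First, by \cref{prop:parallelinrab}\ref{prop:parallelinrab:1}, $\P$ and $\P'$ have a common type $\{i\}$, and by \cref{prop:parallelinrab}\ref{prop:parallelinrab:2}, they lie in a common residue $\R$ of type $\{i\}\cup i^\perp$. The central claim to establish is that for every $c\in\P$,
\[ \lambda_i\bigl(\proj_{\P'}(c)\bigr) = \lambda_i(c). \]
To prove this, I would show that any chamber $c\in\P$ and its projection $c' := \proj_{\P'}(c) \in \P'$ are connected by a gallery all of whose panels have a type in $i^\perp$. One way: the map $\proj_{\P'}$ is an isomorphism $\P\to\P'$ within $\R$, and going from $c$ to $c'$ realizes a minimal gallery that crosses none of the $i$-panels contained in $\R$ (these being mutually parallel inside $\R$). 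Since $\lambda_j$-values with $j\neq i$ do not alter $\lambda_i$ (as $\restrict{\lambda_i}{\Q}$ is constant on any $j$-panel $\Q$ with $j\neq i$), $\lambda_i$ is constant along this gallery, giving the claim.

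Next, since $g$ is an automorphism, $g\P$ and $g\P'$ are again parallel $i$-panels (parallelism being preserved by $\Aut(\Delta)$), and $g$ commutes with projections in the sense that $\proj_{g\P'}(gc) = g\bigl(\proj_{\P'}(c)\bigr) = gc'$. Applying the central claim to the parallel pair $g\P, g\P'$ yields
\[ \lambda_i(gc) = \lambda_i\bigl(\proj_{g\P'}(gc)\bigr) = \lambda_i(gc'). \]
Now unfold the definitions of the local actions: for $\omega = \lambda_i(c) = \lambda_i(c')$,
\[ \sigma_\lambda(g,\P)(\omega) = \lambda_i(gc) = \lambda_i(gc') = \sigma_\lambda(g,\P')(\omega). \]
Since $\restrict{\lambda_i}{\P}$ is a bijection to $\Omega_i$, the element $c$ (and hence $\omega$) ranges over all of $\Omega_i$, so the two permutations coincide.

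The main obstacle is the central claim on coloring, i.e.\ that $\lambda_i \circ \proj_{\P'} = \lambda_i$ on $\P$. Everything else is a routine unfolding of the definition of local action together with the equivariance of projections under $\Aut(\Delta)$. The claim itself should follow transparently once one uses \cref{prop:parallelinrab}\ref{prop:parallelinrab:2} to realize $\proj_{\P'}|_\P$ as a ``horizontal'' move within a residue of type $\{i\}\cup i^\perp$, whose defining panels (those of types in $i^\perp$) are exactly the ones along which $\lambda_i$ is constant.
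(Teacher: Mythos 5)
Your overall strategy is correct, and it is almost certainly the same route as the cited proof in \cite[Lemma~3.7]{bossaert}: establish that $\lambda_i\circ\proj_{\P'}|_{\P}=\lambda_i|_{\P}$, then combine this with equivariance of projections under $\Aut(\Delta)$ and unfold the definition of $\sigma_\lambda$. The reduction of the proposition to the central claim is clean, and the final computation with $\omega=\lambda_i(c)$ is exactly right.

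The one place where your argument is looser than it should be is the justification of the central claim itself. You assert that the minimal gallery from $c$ to $c'=\proj_{\P'}(c)$ inside the residue $\R$ of type $\{i\}\cup i^\perp$ ``crosses none of the $i$-panels contained in $\R$,'' with the parenthetical reason ``these being mutually parallel inside $\R$.'' The parenthetical is true but does not by itself rule out an $i$-step in the gallery. The clean way to close this is via the direct product structure of the Coxeter group: $W_{\{i\}\cup i^\perp}\cong\langle s_i\rangle\times W_{i^\perp}$, so $\delta(c,c')=s_i^{\varepsilon}w$ with $w\in W_{i^\perp}$ and $\varepsilon\in\{0,1\}$. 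If $\varepsilon=1$, pick $c''\in\P'$ with $\delta(c',c'')=s_i$; then $\delta(c,c'')=s_i w s_i=w$ has strictly smaller length, contradicting $c'=\proj_{\P'}(c)$. Hence $\varepsilon=0$ and the gallery type lies in $i^\perp$, so $\lambda_i$ is constant along it. With this substitution, the proof is complete and agrees in spirit and method with the paper's source.
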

\begin{proof}
    This is \cite[Lemma 3.7]{bossaert}.
\end{proof}

\medskip

Automorphism groups of right-angled buildings come equipped with a natural topology.
\begin{definition}
    We define the \emph{permutation topology} on $\Aut(\Delta)$ by taking as an identity neighborhood basis the collection of all pointwise stabilizers of finite subsets of $\Delta$. This makes $\Aut(\Delta)$ (and all its subgroups) into a totally disconnected group.
    
    If all parameters $q_i$ are finite---this will always be the case later on---then $\Aut(\Delta)$ is also locally compact, and the same is then true for the universal groups $\U(\F)$.
\end{definition}

The following notion will play an important role later.
\begin{definition}
	\label{def:young}
	\begin{enumerate}
	    \item 
        	To every partition of a set $X$, we can associate a subgroup of $\Sym(X)$ of all permutations stabilizing the blocks of the partition. A subgroup obtained in this fashion is called a \emph{Young subgroup} of $\Sym(X)$, and is naturally isomorphic to the direct product of the symmetric groups on the blocks.
	    \item 
        	For any permutation group $G\leq\Sym(X)$, we have a canonical partition of $X$ into $G$-orbits. We call the Young subgroup associated to this partition the \emph{Young overgroup of $G$} and denote it by $\widehat G$.
	\end{enumerate}
\end{definition}

Note that we indeed always have the inclusions $G \leq \widehat G \leq \Sym(\Omega)$. Moreover, $G = \widehat G$ if and only if $G$ is itself a Young subgroup, and $\widehat G = \Sym(\Omega)$ if and only if $G$ is transitive.


\section{Panel-closed subsets of right-angled buildings}\label{sec:panel-closed}

Our aim in this section is to prove an extension result, which is somewhat similar to \cite[Proposition 3.8]{bossaert} but requires a more subtle setup. For this purpose, we introduce the notion of panel-closed subsets of right-angled buildings.

\begin{definition}
    A subset $C$ of $\Delta$ is called \emph{panel-closed} if:
    \begin{itemize}
        \item $C$ is convex (i.e., if $c,d \in C$, then all chambers on all minimal galleries between $c$ and $d$ are also contained in $C$), and
        \item each panel $\P$ of $\Delta$ containing at least $2$ chambers of $C$ is completely contained in $C$.
    \end{itemize}
\end{definition}

Our first goal will be to generalize the ``closing squares lemmas'' and the existence of ``concave galleries'' from \cite{silva1}.
Let us first recall these facts.
\begin{lemma}[Closing squares]\label{lem:closingsquares}
    \begin{enumerate}
        \item \label{lem:closingsquares:1}
            Let $c, c_1, c_2, c_3 \in \Delta$ be such that $\dist(c,c_1) = \dist(c,c_3) = n$, $\dist(c,c_2) = n+1$, and $c_1\sim_i c_2\sim_j c_3$ for $i\neq j$. Then $m_{ij}=2$ and there exists $d\in\Delta$ such that $\dist(c,d) = n-1$ and $c_1\sim_j d\sim_i c_3$.
        \item \label{lem:closingsquares:2}
            Let $c, c_1, c_2, c_3 \in \Delta$ be such that $\dist(c,c_1) = \dist(c,c_2) = n+1$, $\dist(c,c_3) = n$, and $c_1\sim_i c_2\sim_j c_3$ for $i\neq j$. Then $m_{ij}=2$ and there exists $d\in\Delta$ such that $\dist(c,d) = n$ and $c_1\sim_j d\sim_i c_3$.
    \end{enumerate}
\end{lemma}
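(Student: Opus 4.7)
The plan is to translate the hypotheses into the language of Weyl distances and reduce the identity $m_{ij} = 2$ to a standard fact about right descent sets in Coxeter groups. I will repeatedly invoke two properties of the $W$-metric on $\Delta$: (a) for any chamber $c$ and any $s$-panel $P$, the projection $\proj_P(c)$ is the unique chamber of $P$ with minimal Weyl distance to $c$, and every other $p\in P$ satisfies $\delta(c,p)=\delta(c,\proj_P(c))\cdot s$ with $\ell(\delta(c,p))=\ell(\delta(c,\proj_P(c)))+1$; and (b) the gate property $\dist(c,x)=\dist(c,\proj_R(c))+\dist_R(\proj_R(c),x)$ for any residue $R$ and any $x\in R$.

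In case \itemref{lem:closingsquares}{1}, property (a) applied to the $i$-panel through $c_1,c_2$ identifies $c_1$ as its projection and yields $\delta(c,c_1)=w$ of length $n$ with $\delta(c,c_2)=wi$ of length $n+1$; applied next to the $j$-panel through $c_2,c_3$ it identifies $c_3$ as that projection and gives $\delta(c,c_3)=wij$ of length $n$. Setting $v:=wi$, both $i$ and $j$ are right descents of $v$. In case \itemref{lem:closingsquares}{2}, since neither $c_1$ nor $c_2$ is the projection onto their shared $i$-panel, property (a) introduces an ``invisible'' chamber $c_0$ at distance $n$ with $\delta(c,c_0)=u'$ and $\delta(c,c_1)=\delta(c,c_2)=u'i$ of length $n+1$; while (a) applied to the $j$-panel through $c_2,c_3$ identifies $c_3$ as its projection and gives $\delta(c,c_3)=u$ of length $n$ with $\delta(c,c_2)=uj$ of length $n+1$, so that $v:=u'i=uj$ again has both $i$ and $j$ as right descents. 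The standard theorem that the right descent set $D(v)$ generates a \emph{finite} parabolic subgroup $W_{D(v)}$, combined with the observation that in a right-angled Coxeter system $W_J$ is finite only when the generators in $J$ pairwise commute, forces $m_{ij}=2$ in both cases.

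Once $m_{ij}=2$ is established, the $\{i,j\}$-residue $R$ containing $c_1,c_2,c_3$ decomposes as a direct product of an $i$-panel $A$ and a $j$-panel $B$, so its chambers carry a rectangular grid structure. Labelling $c_2=(\alpha,\beta)$, $c_1=(\alpha',\beta)$ and $c_3=(\alpha,\beta')$, the fourth corner $d:=(\alpha',\beta')$ automatically satisfies $c_1\sim_j d\sim_i c_3$. Writing $(\alpha_0,\beta_0)=\proj_R(c)$, the gate property (b) combined with the grid structure gives
\[ \dist(c,(a,b))=\dist(c,\proj_R(c))+[a\neq\alpha_0]+[b\neq\beta_0] \]
for every $(a,b)\in R$. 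The three given distances translate into three linear equations in the Iverson indicators, which I solve to determine which coordinates of $\proj_R(c)$ agree with those of $c_1,c_2,c_3$; substituting then yields $\proj_R(c)=d$ with $\dist(c,d)=n-1$ in case \itemref{lem:closingsquares}{1}, and $\beta_0=\beta'$, $\alpha_0\notin\{\alpha,\alpha'\}$ with $\dist(c,d)=n$ in case \itemref{lem:closingsquares}{2}.

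The step I expect to be most delicate is the case (ii) Weyl-distance bookkeeping, where I must introduce the invisible projection $c_0$ and verify that the element $v=u'i=uj$ genuinely has both $i$ and $j$ as right descents; everything else is either a standard invocation of the descent-set theorem or a routine computation inside the finite grid $R$.
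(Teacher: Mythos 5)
Your proof is correct, and it is complete where the paper only offers a citation (the paper delegates to \cite[Lemmas 2.9 and 2.10]{silva1} without reproducing the argument). Your route is conceptually tidy: you use the gate property for panels to read off the Weyl distances $\delta(c,\cdot)$, observe that both $i$ and $j$ are right descents of $v=\delta(c,c_2)$, and then invoke the general theorem that the right descent set $D_R(v)$ of any Coxeter group element generates a \emph{finite} standard parabolic subgroup; combined with the fact that in a right-angled Coxeter system a rank-two parabolic is finite precisely when the two generators commute, this gives $m_{ij}=2$ in one stroke. The original argument in \cite{silva1}, as I recall it, reaches the same conclusion via a more hands-on manipulation of minimal galleries and the exchange condition, so your descent-set shortcut is a genuine (if modest) streamlining; what it buys is that the ``$m_{ij}=2$'' step is isolated from the right-angled hypothesis, which enters only at the very end. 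Your rank-two grid computation for the second half is exactly right: once $m_{ij}=2$, the $\{i,j\}$-residue $R$ is a direct product of an $i$-panel with a $j$-panel, the gate property gives $\dist(c,(a,b)) = \dist(c,\proj_R(c)) + [a\neq\alpha_0] + [b\neq\beta_0]$, and the three hypotheses pin down $(\alpha_0,\beta_0)$ and hence $\dist(c,d)$; in case (i) one even finds $d=\proj_R(c)$. One small point worth making explicit: in case (ii) the auxiliary chamber $c_0 = \proj_P(c)$ is automatically distinct from $c_1$ and $c_2$ because it lies at distance $n$ while they lie at $n+1$, so no separate thickness hypothesis is needed (and indeed in a thin $i$-panel the case-(ii) hypothesis cannot occur).
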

\begin{proof}
    See \cite[Lemmas 2.9 and 2.10]{silva1}; see also \cref{fig:closingsquares} below.
\end{proof}
\begin{figure}[ht]
	\centering
	\begin{subfigure}[b]{.4\textwidth}
		\centering
		\begin{tikzpicture}
			\node[myvertex,label=330:$c$] (C0) at (0,1.5) {};
			\foreach\i/\l in {0/$n-1$,1/$n$,1.9/$n+1$}
				\draw[myedge,mydarkgray] (65:3+\i) node[anchor=170] {\strut\l} arc (65:115:3+\i);
			\node[myvertex,label=330:$d$] (D) at (90:3) {};
			\node[myvertex,label=150:$c_1$] (C1) at (100:4) {};
			\node[myvertex,label=30:$c_3$] (C3) at (80:4) {};
			\node[myvertex,label=30:$c_2$] (C2) at (90:4.9) {};
			\draw[myedge,decoration={start down,end up,amplitude=2mm,cheating snake},decorate] (C0.120) to (D.240);
			\draw[myedge,ugentblue] (C1) to (C2) (C3) to (D);
			\draw[myedge,ugentred] (C2) to (C3) (C1) to (D);
		\end{tikzpicture}
		\caption{\itemref{lem:closingsquares}{1}}
	\end{subfigure}\hspace*{5ex}
	\begin{subfigure}[b]{.4\textwidth}
		\centering
		\begin{tikzpicture}[every label/.append style={black}]
			\node[myvertex,label=330:$c$] (C0) at (0,2) {};
			\draw[myedge,mydarkgray] (65:4) node[anchor=170] {\strut $n$}
				arc (65:80:4) node[myvertex,black,label=10:$c_3$] (C3) {}
				(100:4) node[myvertex,black,label=170:$d$] (D) {}
				arc (100:115:4);
			\draw[myedge,mydarkgray] (65:4.9) node[anchor=170] {\strut $n+1$}
				arc (65:81:4.9) node[myvertex,black,label=30:$c_2$] (C2) {}
				(99:4.9) node[myvertex,black,label=150:$c_1$] (C1) {}
				arc (99:115:4.9);
			\draw[myedge,decoration={start down,end down,amplitude=2mm,cheating snake},decorate] (C0.150) to (D.300);
			\draw[myedge,decoration={start up,end up,amplitude=2mm,cheating snake},decorate] (C0.30) to (C3.240);
			\draw[myedge,ugentblue] (C1) to (C2) (C3) to (D);
			\draw[myedge,ugentred] (C1) to (D) (C2) to (C3);
		\end{tikzpicture}
		\caption{\itemref{lem:closingsquares}{2}}
	\end{subfigure}
\caption{The ``closing squares'' lemmas}
\label{fig:closingsquares}
\end{figure}
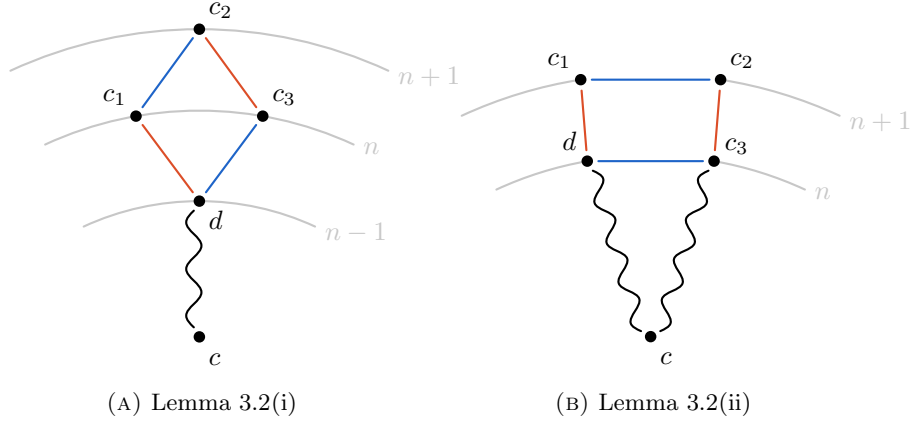

\begin{proposition}[Concave galleries]\label{prop:concave}
    Let $c, c_1, c_2 \in \Delta$. Then there exists a minimal gallery
    \[ \gamma = (v_0,\dots,v_\ell), \quad \text{with } v_0 = c_1,\ v_\ell = c_2, \]
    such that there are numbers $0 \leq j \leq k \leq \ell$ satisfying:
    \begin{itemize}
        \item $\dist(c, v_i) < \dist(c, v_{i-1})$ for all $i \in \{ 1,\dots, j \}$,
        \item $\dist(c, v_i) = \dist(c, v_{i-1})$ for all $i \in \{ j+1,\dots, k \}$,
        \item $\dist(c, v_i) > \dist(c, v_{i-1})$ for all $i \in \{ k+1,\dots, \ell \}$.
    \end{itemize}
    Such a minimal gallery from $c_1$ to $c_2$ will be called \emph{concave (with respect to $c$)}.
\end{proposition}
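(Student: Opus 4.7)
The plan is to begin with any minimal gallery $\gamma = (v_0, \dots, v_\ell)$ from $c_1$ to $c_2$ and iteratively deform it into a concave one by invoking the closing squares lemmas. Writing $d_i := \dist(c, v_i)$ and $\delta_i := d_i - d_{i-1}$, a routine triangle-inequality argument gives $\delta_i \in \{-1, 0, +1\}$ for every $i$. The gallery is concave with respect to $c$ precisely when the sequence $(\delta_1, \dots, \delta_\ell)$ is weakly increasing in the ordering $-1 < 0 < +1$, so the task reduces to sorting this sequence of length $\ell$ by adjacent transpositions that preserve minimality.

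The key point is that \cref{lem:closingsquares} supplies exactly the three local swap moves that are needed. Applying \itemref{lem:closingsquares}{1} with $(c_1, c_2, c_3) = (v_{i-1}, v_i, v_{i+1})$ converts an out-of-order pattern $(\delta_i, \delta_{i+1}) = (+1, -1)$ into $(-1, +1)$ by replacing the local maximum $v_i$ with the chamber $d$ produced by the lemma. Similarly, \itemref{lem:closingsquares}{2} converts $(0, -1)$ into $(-1, 0)$. For the remaining move $(+1, 0) \mapsto (0, +1)$ I would apply \itemref{lem:closingsquares}{2} to the reversed gallery $(v_\ell, v_{\ell-1}, \dots, v_0)$, which is again minimal and whose distance profile is the sign-flipped reversal of the original; under this reversal a pattern $(+1, 0)$ becomes $(0, -1)$, and transporting the resulting swap back yields the desired move. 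In each of the three cases only the middle chamber is replaced, so the new sequence is still a gallery of the same length, hence still minimal.

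To see termination, let $\Phi(\gamma)$ denote the number of inversions of $(\delta_1, \dots, \delta_\ell)$ relative to the order $-1 < 0 < +1$. Each of the three moves is an adjacent transposition that resolves exactly one inversion and affects no other pair, so $\Phi$ strictly decreases at each step. Since $\Phi \geq 0$, the procedure terminates in finitely many deformations at a minimal gallery with no remaining out-of-order adjacent pair; its profile then consists of a block of $-1$'s, followed by a block of $0$'s, followed by a block of $+1$'s, and one takes $j$ and $k$ to be the right endpoints of the first two blocks. The only mildly subtle point is the reversal trick used to derive the third swap from \itemref{lem:closingsquares}{2}; once that is in place, what remains is a straightforward bubble-sort argument with a clean monovariant.
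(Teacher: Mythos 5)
Your argument is correct and is essentially the same approach as the proof the paper relies on: the paper simply cites \cite[Lemma~2.11]{silva1}, and (as the paper itself notes in the proof of \cref{prop:C-concave}) that proof rests on exactly the closing-squares deformations you use to sort the distance profile of a minimal gallery. The only points you leave implicit---that consecutive adjacencies in a minimal gallery have distinct types, so the hypothesis $i\neq j$ of \cref{lem:closingsquares} is satisfied, and that the replacement chamber is distinct from its two neighbours so the deformed sequence is again a (minimal) gallery---follow immediately from minimality and the distance values.
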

\begin{proof}
    This is \cite[Lemma 2.11]{silva1}.
\end{proof}

The following lemma is a generalization of the gate property for residues (see, e.g., \cite[Proposition 5.34]{abramenkobrown}); note that residues are indeed panel-closed.

\begin{lemma}
	\label{lem:convexgateproperty}
	Let $C$ be a panel-closed subset of $\Delta$ and let $c\in\Delta$ be any chamber. Then there is a unique chamber $d\in C$ such that $\dist(c,d)$ is minimal.
	Moreover, the gate property holds: for every chamber $e\in C$, there exists a minimal gallery from $c$ to $e$ passing through $d$.
\end{lemma}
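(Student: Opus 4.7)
The plan is to combine the existence of concave galleries (\cref{prop:concave}) with the defining panel-closure property of $C$. Existence of a minimizer $d\in C$ is immediate since $\dist(c,\cdot)$ takes values in $\mathbb{N}$. The key observation driving both the uniqueness statement and the gate property is the following: if two distinct chambers of a panel $\P$ lie in $C$, then $\P\subseteq C$, and consequently $C$ inherits the gate from $\P$; in particular, if two such chambers in $\P$ are equidistant from $c$, then neither is the gate of $\P$, so $C$ contains a chamber of $\P$ at strictly smaller distance from $c$.

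For uniqueness, I would suppose that $d_1,d_2\in C$ are two distinct chambers realizing the minimal distance $n$ to $c$, and take a concave gallery $(v_0=d_1,v_1,\dots,v_\ell=d_2)$ from $d_1$ to $d_2$. By convexity, the whole gallery lies in $C$. The decreasing part must be empty, since any chamber on it would lie in $C$ at distance $<n$, contradicting minimality of $d_1$. Analogously, the increasing part must be empty, because concavity forces the distance to end at $\dist(c,d_2)=n$, and the increasing part starts from a value $\leq n$. Hence the gallery is constant at distance $n$, and in particular $v_0$ and $v_1$ are distinct chambers of a common panel $\P$, both in $C$. The observation above then supplies a chamber in $C$ at distance $n-1$, a contradiction.

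For the gate property, given an arbitrary $e\in C$, I would apply the same strategy to a concave gallery $(v_0=d,v_1,\dots,v_\ell=e)$ from $d$ to $e$. The decreasing part is again empty by minimality of $d$, and the constant part is empty by the panel-closure observation applied to the first flat step. Hence the distance to $c$ strictly increases at every step of the gallery, giving $\dist(c,e)=\dist(c,d)+\dist(d,e)$. Concatenating a minimal gallery from $c$ to $d$ with this concave gallery from $d$ to $e$ produces a minimal gallery from $c$ to $e$ through $d$, as required.

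The main (mild) obstacle is making sure that the concave-gallery dichotomy ``decreasing part nonempty versus constant part nonempty'' is exhaustive of the nontrivial cases; the role of panel-closure is precisely to convert the second case into the first, by promoting a flat panel-step inside $C$ to an even closer chamber in $C$ via the gate of the panel.
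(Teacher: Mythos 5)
Your proof is correct. The uniqueness argument is essentially identical to the paper's: take a concave gallery between two minimizers, note that convexity and minimality force every chamber of it to lie in $C$ at distance exactly $n$ from $c$, reduce to an adjacent pair in a common panel $\P\subseteq C$, and observe that the gate $\proj_\P(c)\in C$ is strictly closer to $c$, a contradiction.

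For the gate property, however, you take a genuinely different route. The paper inducts on $\dist(c,C)$: it moves $c$ one step to an $i$-adjacent chamber $c'$ strictly closer to $C$, shows $\proj_{\P_i(c)}(d)=c'=\proj_{\P_i(c)}(e)$, and prepends the step to a minimal gallery from $c'$ through $d$ to $e$ furnished by the inductive hypothesis. You instead run the concave-gallery mechanism a second time, from $d$ to $e$: the decreasing part is killed by minimality of $n$, the constant part is killed by panel-closure via the panel gate, so the gallery is strictly increasing, giving $\dist(c,e)=\dist(c,d)+\dist(d,e)$ and letting concatenation finish. Your approach is slightly more uniform—both halves of the lemma fall out of one and the same concave-gallery argument—and it avoids the small bookkeeping the paper's induction entails (checking that $d$ remains the gate of the closer chamber $c'$, and that $\proj_{\P}(e)=c'$). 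Both arguments are comparably elementary and both are correct.
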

\begin{proof}
	Assume that $d$ and $d'$ are two distinct chambers in $C$ that minimize the distance to $c$. Let $\gamma$ be the minimal gallery that joins $d$ and $d'$ and is concave with respect to $c$ (as in \cref{prop:concave}). By convexity of~$C$ and concavity of~$\gamma$, every chamber on $\gamma$ is a chamber in $C$ at the same distance to $c$ as $d$ and $d'$. We may thus assume that $d$ and $d'$ are $i$-adjacent for some $i\in I$. Let $\P$ be the $i$-panel containing $d$ and $d'$; by assumption, $\P \subseteq C$. Now let $c'=\proj_\P(c)$; then $\dist(c,c')<\dist(c,d)$, while $c'\in C$. This is a contradiction, showing that there is a unique chamber $d\in C$ closest to $c$.
	
	For the gate property, we use induction on $\dist(c,C)$. If $c\in C$, then there is nothing to show. Now assume $\dist(c,C)\geq 1$ and let $c'\in\Delta$ and $i\in I$ be such that $\dist(c',C)<\dist(c,C)$ and $c'\sim_i c$. Let $\P=\P_i(c)$. Then by construction, $\proj_\P(d)=c'$ and there exists a minimal gallery $\gamma$ from $c'$ to $e$ through $d$ by the induction hypothesis. Minimality yields that also $\proj_\P(e)=c'$. Hence $\dist(c,e)>\dist(c',e)$ and the gallery obtained by prepending $c\sim_i c'$ to $\gamma$ is again minimal.
\end{proof}
\begin{definition}
    Let $C$ be a panel-closed subset of $\Delta$ and let $c\in\Delta$.
    We will call the unique chamber $d \in C$ closest to $c$ the \emph{projection} of $c$ on $d$ and we will denote it by $\proj_C(c)$.
\end{definition}

\begin{definition}
\begin{enumerate}
    \item
        Let $c \in \Delta$ be a chamber. As in \cite[Definition 1.3]{silva1}, we define the \emph{ball} and the \emph{sphere} of radius $n$ around $c$ by
    	\begin{align*}
    		\ball_n(c) &= \{d\in\Delta \mid \dist(c,d)\leq n\},\\
    		\sphere_n(c) &= \{d\in\Delta \mid \dist(c,d)= n\}.
    	\end{align*}
    \item
        More generally, let $C \subseteq \Delta$ be panel-closed. Then we define the \emph{ball} and the \emph{sphere} of radius $n$ around $C$ by
    	\begin{align*}
    		\ball_n(C) &= \{d\in\Delta \mid \dist(C,d)\leq n\},\\
    		\sphere_n(C) &= \{d\in\Delta \mid \dist(C,d)= n\}.
    	\end{align*}
\end{enumerate}
\end{definition}

We are now ready to generalize \cref{lem:closingsquares} and \cref{prop:concave} to panel-closed subsets.
We begin with an easy lemma, emphasizing an important dichotomy.
\begin{lemma}\label{lem:C-spheres}
	\label{lem:panel-to-C}
	Let $C \subseteq \Delta$ be panel-closed and let $\P$ be an $i$-panel.
	Let $n$ be the smallest number such that $\P \cap \sphere_n(C) \neq \emptyset$.
	Then either
	\begin{enumerate}[label={\rm (\alph*)}]
	    \item\label{lem:C-spheres:a} \begin{itemize}
	        \item $\P \subseteq \sphere_n(C)$,
	        \item $\P$ is parallel to a panel $\P' \subseteq C$, and
	        \item $\proj_C(\P) = \P'$; \quad or
	    \end{itemize} \vspace*{1ex}
	    \item\label{lem:C-spheres:b} \begin{itemize}
	        \item $\P$ has exactly one chamber in $\sphere_n(C)$ and all its other chambers in $\sphere_{n+1}(C)$,
	        \item $\P$ is not parallel to any panel in $C$, and
	        \item $\proj_C(\P) = \{ e \}$ for some $e \in C$.
	    \end{itemize}
	\end{enumerate}
\end{lemma}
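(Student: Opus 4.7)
The plan is to carry out a dichotomy based on whether $\P$ admits an $i$-panel parallel to it inside $C$. Fix a chamber $c \in \P \cap \sphere_n(C)$ and let $d = \proj_C(c)$ be its unique projection, guaranteed by \cref{lem:convexgateproperty}. I would first verify that $\proj_\P(d) = c$: for any $c_* \in \P \setminus \{c\}$ the bound $\dist(c_*, d) \geq \dist(c_*, C) \geq n$ holds, and the equality $\dist(c_*, d) = n$ would make $d$ realize the minimum distance from $c_*$ to $C$ as well, contradicting uniqueness of $\proj_C$ for the two distinct candidates $c$ and $c_*$. Hence $\dist(c_*, d) = n+1$ for every $c_* \in \P \setminus \{c\}$.

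Suppose first that no $i$-panel of $C$ is parallel to $\P$. I would show that then $\P \cap \sphere_n(C) = \{c\}$. Otherwise pick some $c' \in \P \cap \sphere_n(C)$ with $c' \neq c$ and let $d' = \proj_C(c') \neq d$. By \cref{prop:concave}, choose a minimal gallery from $d$ to $d'$ that is concave with respect to $c$; by convexity this gallery lies in $C$. Because any chamber of $C$ at distance $n$ from $c$ must equal $d$, the gallery cannot stay constant at level $n$ nor decrease below it, so concavity forces it to be strictly increasing and hence of length exactly one. Write $d \sim_j d'$ and let $\P_*$ be the $j$-panel through $d$ and $d'$, which lies in $C$ by panel-closedness. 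Since $d, d' \in \P_*$ project to the distinct chambers $c, c' \in \P$, \itemref{prop:parallelinrab}{4} tells us that $\P_*$ is parallel to $\P$, and then \itemref{prop:parallelinrab}{1} forces $j = i$, contradicting our assumption. From $\P \cap \sphere_n(C) = \{c\}$, the rest of case (b) follows: for every $c_* \in \P \setminus \{c\}$ the bound $\dist(c_*, d) = n+1$ and minimality of $n$ imply $\dist(c_*, C) = n+1$, and uniqueness of $\proj_C(c_*)$ forces $\proj_C(c_*) = d$.

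Suppose instead that some $i$-panel $\P^\#$ of $C$ is parallel to $\P$. By \itemref{prop:parallelinrab}{2}, both $\P$ and $\P^\#$ lie in a common residue $\R^*$ of type $\{i\} \cup i^\perp$, which decomposes as a product because every generator in $i^\perp$ commutes with $i$. The main obstacle will be to show that $d \in \R^*$: a minimal gallery from $d$ to any chamber of $\P^\#$ stays in $C$ by convexity, while the gate property of $\R^*$ forces it to pass through $\proj_{\R^*}(d)$, so $\proj_{\R^*}(d) \in C$; if $d \notin \R^*$ this intermediate chamber would be strictly closer to $c$ than $d$, contradicting $d = \proj_C(c)$. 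Once $d \in \R^*$, the unique $i$-panel $\P'$ of $\R^*$ through $d$ is parallel to $\P$ with $\dist(\P, \P') = n$, and a short product-distance argument shows that every chamber of $\P'$ lies on a minimal gallery between $d$ and some chamber of $\P^\#$; convexity of $C$ then forces $\P' \subseteq C$. For each $c_* \in \P$, the parallelism bijection produces $\proj_{\P'}(c_*) \in \P' \subseteq C$ with $\dist(c_*, \proj_{\P'}(c_*)) = n$, so $c_* \in \sphere_n(C)$ and uniqueness of $\proj_C(c_*)$ gives $\proj_C(c_*) = \proj_{\P'}(c_*)$. This delivers case (a) with $\P' = \proj_C(\P)$.
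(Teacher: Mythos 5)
Your overall route is genuinely different from the paper's and most of it is sound: you split on whether $C$ contains an $i$-panel parallel to $\P$, handle the non-parallel case with a concave gallery between the two projections, and handle the parallel case via the product structure of the residue of type $\{i\}\cup i^\perp$; the paper instead splits on whether $\P$ meets $\sphere_n(C)$ in one chamber or more and settles both cases rather quickly from \itemref{prop:parallelinrab}{4}. However, there is a genuine gap at your very first step, and it propagates. You claim $\proj_\P(d)=c$ (equivalently, $\dist(c_*,d)=n+1$ for all $c_*\in\P\setminus\{c\}$) by arguing that $\dist(c_*,d)=n$ would ``contradict uniqueness of $\proj_C$ for the two distinct candidates $c$ and $c_*$''. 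That is a non sequitur: \cref{lem:convexgateproperty} says that each chamber has a \emph{unique nearest} chamber in $C$; it says nothing against two distinct chambers sharing the same projection. Indeed, case \ref{lem:C-spheres:b} of the very statement you are proving is exactly the situation in which all chambers of $\P$ have the same projection $e$, so $\proj_C$ is far from injective and no contradiction arises from $\proj_C(c)=\proj_C(c_*)$ alone.

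The claim itself is true, but the correct reason is the paper's key observation: if two distinct chambers of $\P$ were both at distance $n$ from $d\in C$, then the gate property of the panel $\P$ applied to $d$ would force $\dist(d,\proj_\P(d))\leq n-1$, so $\P\cap\sphere_{n-1}(C)\neq\emptyset$, contradicting the minimality of $n$. You need this repair because the claim is load-bearing in your second paragraph: it is what gives $d'\neq d$, it is what provides the distinct projections $\proj_\P(d)=c$ and $\proj_\P(d')=c'$ needed to invoke \itemref{prop:parallelinrab}{4}, and it is what yields $\dist(c_*,C)=n+1$ at the end of case (b). With that one fix the rest of your argument goes through; a minor presentational point is that the assertion $\dist(\P,\P')=n$ in your last paragraph should be postponed until after $\P'\subseteq C$ has been established (before that, you only know $\dist(\P,\P')\leq n$), but nothing in your chain of deductions actually uses it earlier.
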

\begin{proof}
    Let $c \in \P \cap \sphere_n(C)$. Since every other chamber in $\P$ is adjacent to $c$, we already have $\P \subseteq \sphere_n(C) \cup \sphere_{n+1}(C)$.
    
	Assume that there is a second chamber $d \in \P\cap\sphere_n(C)$ with $d \neq c$.  If $\proj_C(c)=\proj_C(d) =: e$, then $\proj_\P(e)$ will be a chamber in $\sphere_{n-1}(C)$ (because of the gate property applied to $\P$), contradicting the minimality of $n$.
	Hence we must have $\proj_C(c)\neq\proj_C(d)$. Those projections are $i$-adjacent; let $\P'$ be the $i$-panel through them. By \itemref{prop:parallelinrab}{4}, $\P$ and $\P'$ are parallel, and since $C$ is panel-closed, we have $\P' \subseteq C$.
	In particular, all chambers in $\P$ are at the same distance from $C$, so $\P\subseteq\sphere_n(C)$.
	
	Assume now that $c$ is the only chamber in $\P \cap \sphere_n(C)$, so that all its other chambers lie in $\sphere_{n+1}(C)$.
	Since not all chambers of $\P$ lie at the same distance from $C$, the panel $\P$ can certainly not be parallel to any panel in $C$.
	In particular, \itemref{prop:parallelinrab}{4} now implies that all chambers in $\P$ must have the same projection on~$C$.
\end{proof}

\begin{lemma}[Closing squares]\label{lem:C-closingsquares}
    Let $C \subseteq \Delta$ be panel-closed.
    \begin{enumerate}
        \item \label{lem:C-closingsquares:1}
            Let $c_1, c_2, c_3 \in \Delta$ be such that $\dist(C,c_1) = \dist(C,c_3) = n$, $\dist(C,c_2) = n+1$, and $c_1\sim_i c_2\sim_j c_3$ for $i\neq j$. Then $m_{ij}=2$ and there exists $d\in\Delta$ such that $\dist(C,d) = n-1$ and $c_1\sim_j d\sim_i c_3$.
        \item \label{lem:C-closingsquares:2}
            Let $c_1, c_2, c_3 \in \Delta$ be such that $\dist(C,c_1) = \dist(C,c_2) = n+1$, $\dist(C,c_3) = n$, and $c_1\sim_i c_2\sim_j c_3$ for $i\neq j$. Then $m_{ij}=2$ and there exists $d\in\Delta$ such that $\dist(C,d) = n$ and $c_1\sim_j d\sim_i c_3$.
    \end{enumerate}
\end{lemma}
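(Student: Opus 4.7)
The plan is to reduce both statements to the original \cref{lem:closingsquares} by choosing a suitable base chamber $c \in C$ with the help of the dichotomy in \cref{lem:C-spheres}. In both parts, the $j$-panel $\P_j$ through $c_2$ and $c_3$ has exactly one chamber at the minimal distance $n$ to $C$ (namely $c_3$), forcing it into case (b) of \cref{lem:C-spheres} and yielding $\proj_C(c_2) = \proj_C(c_3) =: e$.

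For part (i), the analogous application to the $i$-panel $\P_i$ through $c_1, c_2$ again lands in case (b) and gives $\proj_C(c_1) = \proj_C(c_2) = e$. By the gate property (\cref{lem:convexgateproperty}), the chambers $c_1, c_2, c_3$ then lie at respective distances $n, n+1, n$ from $e$, so \itemref{lem:closingsquares}{1} applied with base chamber $e$ yields $m_{ij} = 2$ and a chamber $d$ with $\dist(e, d) = n-1$ and $c_1 \sim_j d \sim_i c_3$. The bounds $\dist(C, d) \leq \dist(e, d) = n-1$ (since $e \in C$) and $\dist(C, d) \geq \dist(C, c_1) - 1 = n-1$ (since $d \sim_j c_1$) pin down $\dist(C, d) = n-1$.

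For part (ii), the panel $\P_i$ has both $c_1$ and $c_2$ at distance $n+1$ from $C$, so \cref{lem:C-spheres} leaves two possibilities. If $\P_i$ is in case (b), then $\proj_C(c_1) = \proj_C(c_2) = e$, all three chambers project to $e$ with distance profile $(n+1, n+1, n)$, and \itemref{lem:closingsquares}{2} with base chamber $e$ finishes the argument exactly as above. If $\P_i$ is in case (a), then it is parallel to an $i$-panel $\P_i' \subseteq C$, and $\proj_C(c_1) = e_1 \in \P_i'$ is $i$-adjacent to (and distinct from) $e$. I then switch the base chamber to $c := e_1$: the gate property applied through the common projection $e$ of $c_2$ and $c_3$ gives $\dist(e_1, c_2) = n+2$ and $\dist(e_1, c_3) = n+1$, while $\dist(e_1, c_1) = n+1$ by construction. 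This distance profile is exactly the hypothesis of \itemref{lem:closingsquares}{1} (with its parameter ``$n$'' taken to be $n+1$), producing $m_{ij} = 2$ together with a chamber $d$ satisfying $\dist(e_1, d) = n$ and $c_1 \sim_j d \sim_i c_3$, and the same pinching argument as in part (i) yields $\dist(C, d) = n$.

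The main obstacle is the case-(a) branch in part (ii): when $\P_i$ is parallel to a panel inside $C$, the common projection $e$ no longer has the distance profile required by \cref{lem:closingsquares}, and the trick is to notice that moving the base chamber to the adjacent projection $e_1 \in C$ shifts the whole configuration ``one sphere outwards'', recasts the situation as an instance of \itemref{lem:closingsquares}{1} rather than \itemref{lem:closingsquares}{2}, and still supplies the desired square-closing chamber $d$.
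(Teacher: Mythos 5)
Your proof is correct, and for part (i) and the first branch of part (ii) it is the same argument as the paper's: use \cref{lem:C-spheres} to see that the relevant chambers share a single projection $e$ on $C$, then quote \cref{lem:closingsquares} with base chamber $e$ (the paper leaves the final pinching $\dist(C,d)=n-1$ implicit, which you spell out). The genuine divergence is in the remaining branch of (ii), where the $i$-panel $\P$ through $c_1,c_2$ falls into case \itemref{lem:C-spheres}{a}. There the paper stays with the parallelism machinery: $\P$ is parallel to a panel $\P'\subseteq C$, so both lie in a common residue of type $\{i\}\cup i^\perp$, whence $j\in i^\perp$ by comparing the types of minimal galleries from $c_2$ to its projection, and the square is then closed directly using $m_{ij}=2$, with $\dist(C,d)=n$ obtained from transitivity of parallelism (\itemref{prop:parallelinrab}{3}). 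You instead re-base at $e_1=\proj_C(c_1)\in C$, use the gate property of \cref{lem:convexgateproperty} to get the distance profile $(n+1,n+2,n+1)$, and invoke \itemref{lem:closingsquares}{1} one sphere further out; this is more elementary (no appeal to \cref{prop:parallelinrab} or gallery-type arguments), at the price of some extra distance bookkeeping, whereas the paper's route keeps explicit the structural fact ``parallel to a panel inside $C$'' that is reused in \cref{lem:convextechnicalprojectionstuff} and \cref{prop:evenmoretechnicalextensionstuff}. One small point you should justify in a line: in case \itemref{lem:C-spheres}{a} you assert $e_1\neq e$, and your computation $\dist(e_1,c_2)=n+2$ relies on it; it does hold, since if $\proj_C(c_1)=\proj_C(c_2)=e$ then the gate $\proj_\P(e)$ would be a chamber of $\P$ at distance at most $n$ from $C$, contradicting $\P\subseteq\sphere_{n+1}(C)$ (equivalently, in case \itemref{lem:C-spheres}{a} the map $\proj_C$ restricted to $\P$ is the parallelism bijection onto $\P'$).
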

\begin{proof}
    \begin{enumerate}
        \item
            By \cref{lem:C-spheres}, the chambers $c_1$, $c_2$ and $c_3$ all have the same projection on~$C$.
            We can thus immediately apply \itemref{lem:closingsquares}{1}.
        \item
            By \cref{lem:C-spheres}, the chambers $c_2$ and $c_3$ have the same projection $c$ on~$C$.
            Let $d := \proj_C(c_1)$.
            If also $d = c$, then we can apply \itemref{lem:closingsquares}{2}.
            If $d \neq c$, then the $i$-panel $\P$ through $c_1$ and $c_2$ is parallel to the $i$-panel $\P'$ through $c$ and $d$, which is contained in $C$ because $C$ is panel-closed.
            By \itemref{prop:parallelinrab}{2}, $\P$ and $\P'$ are contained in a common residue of type $i \cup i^\perp$. In particular, the minimal gallery from $c_2$ to $c$ through $c_3$ is of this type, hence $j \in i^\perp$.
            
            Since $ij = ji$, there exists a chamber $d \in \Delta$ with $c_1 \sim_j d \sim_i c_3$.
            Let $\P''$ be the $i$-panel containing $d$ and $c_3$. By construction, $\P''$ is parallel to~$\P$, but since parallelism is an equivalence relation (\itemref{prop:parallelinrab}{3}), this implies that $\P''$ and $\P' \subseteq C$ are also parallel. Since $\dist(C,c_3)=n$, this implies that also $\dist(C,d)=n$, and we have found the required configuration.
        \qedhere
    \end{enumerate}
\end{proof}
\begin{proposition}[Concave galleries]\label{prop:C-concave}
    Let $C \subseteq \Delta$ be panel-closed and let $c_1, c_2 \in \Delta$. Then there exists a minimal gallery
    \[ \gamma = (v_0,\dots,v_\ell), \quad \text{with } v_0 = c_1,\ v_\ell = c_2, \]
    such that there are numbers $0 \leq j \leq k \leq \ell$ satisfying:
    \begin{itemize}
        \item $\dist(C, v_i) < \dist(C, v_{i-1})$ for all $i \in \{ 1,\dots, j \}$,
        \item $\dist(C, v_i) = \dist(C, v_{i-1})$ for all $i \in \{ j+1,\dots, k \}$,
        \item $\dist(C, v_i) > \dist(C, v_{i-1})$ for all $i \in \{ k+1,\dots, \ell \}$.
    \end{itemize}
    Such a minimal gallery from $c_1$ to $c_2$ will be called \emph{concave (with respect to $C$)}.
\end{proposition}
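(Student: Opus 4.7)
The plan is to adapt the proof of \cref{prop:concave} by replacing each use of \cref{lem:closingsquares} with \cref{lem:C-closingsquares}. I would start from any minimal gallery $\gamma = (v_0, \ldots, v_\ell)$ from $c_1$ to $c_2$ and record the distance profile $d_i := \dist(C, v_i)$ together with the difference sequence $\delta_i := d_i - d_{i-1}$ for $1 \leq i \leq \ell$. By \cref{lem:C-spheres}, any two adjacent chambers lie at distances from $C$ differing by at most $1$, so $\delta_i \in \{-1, 0, +1\}$; moreover, since $\gamma$ is minimal, its type word is reduced, and consequently consecutive adjacencies have distinct types---exactly the hypothesis needed to invoke \cref{lem:C-closingsquares}.

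Concavity of $\gamma$ amounts to the sequence $(\delta_1, \ldots, \delta_\ell)$ having the block form $(-1)^a \, 0^b \, (+1)^c$; equivalently, no adjacent pair $(\delta_i, \delta_{i+1})$ should be one of the three \emph{bad} patterns $(+1, -1)$ (a peak), $(+1, 0)$ (increase then plateau), or $(0, -1)$ (plateau then decrease). Note that valleys $(-1, +1)$ are allowed (they correspond to $j = k$), so they need not be removed. The heart of the argument is to eliminate bad pairs one at a time via local closing-squares moves: for a peak at position $i$ (profile $n, n+1, n$) I would invoke \itemref{lem:C-closingsquares}{1} on the triple $(v_{i-1}, v_i, v_{i+1})$ to produce a replacement chamber $d$ at distance $n-1$ from $C$; for a pair $(+1, 0)$ or $(0, -1)$ (profile $n, n+1, n+1$ or $n+1, n+1, n$) I would invoke \itemref{lem:C-closingsquares}{2} on the same triple in the appropriate orientation to produce a replacement at the lower of the two relevant distances. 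In each case, swapping $v_i$ for $d$ preserves both endpoints and the length of the gallery, hence its minimality.

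For termination, I would observe that each move strictly decreases the integer-valued quantity $\sum_{i=0}^\ell d_i$ (by $2$ for a peak move, by $1$ for the other two) while this sum is bounded below by $0$; so the rewriting process must halt after finitely many steps at a gallery free of bad pairs. The resulting gallery is concave, with $j$ the length of the initial $-1$-run and $k$ the sum of the lengths of the $-1$- and $0$-runs in the final $\delta$-sequence. The main thing to watch will be the bookkeeping of how a local swap affects the neighboring $\delta$'s, but since a closing-squares move only rewrites the middle chamber $v_i$ it alters only the two entries $\delta_i$ and $\delta_{i+1}$, and the strictly decreasing invariant $\sum_i d_i$ rules out any looping. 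I do not foresee a serious obstacle: the only conceptual novelty compared to \cref{prop:concave} is that $\delta_i$ may now be $0$, introducing two extra bad patterns that are handled precisely by \itemref{lem:C-closingsquares}{2} (which is vacuous for the single-chamber case of \cref{lem:closingsquares}).
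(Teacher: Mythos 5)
Your proposal is correct and follows essentially the same route as the paper: the paper's proof simply observes that the argument of \cite[Lemma 2.11]{silva1} uses nothing but the closing-squares lemmas and therefore carries over \emph{mutatis mutandis} once \cref{lem:closingsquares} is replaced by \cref{lem:C-closingsquares}. You have merely written out explicitly the rewriting-and-termination argument (eliminating the bad patterns $(+1,-1)$, $(+1,0)$, $(0,-1)$ with the decreasing potential $\sum_i \dist(C,v_i)$) that the paper leaves implicit by citation, and your bookkeeping is sound.
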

\begin{proof}
    The proof from \cite[Lemma 2.11]{silva1} for the case $C = \{ c \}$ only relies on the closing squares lemmas. By \cref{lem:C-closingsquares}, the proof carries over \emph{mutatis mutandis} to the case where $C$ is any panel-closed subset of $\Delta$.
\end{proof}

\medskip

Our next goal is to generalize the following result from \cite{silva1}.

\begin{lemma}
	\label{lem:technicalprojectionstuff}
	Let $c \in \Delta$ and let $\P \subseteq \Delta$ be an $i$-panel. Let $c'=\proj_\P(c)$ and $\dist(c,c') = n$. Let $d\in\ball_{n+1}(c)\setminus\P$, let $d'=\proj_\P(d)$, and assume that $d'\neq c'$, i.e., $d' \in \sphere_{n+1}(c)$.
	
	Then $d'$ is $j$-adjacent to some chamber in $\sphere_n(c)$ with $m_{ij}=2$.
\end{lemma}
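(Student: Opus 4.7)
The plan is to proceed by induction on $k := \dist(d, d')$, which is a positive integer since $d \notin \P$ while $d' \in \P$.

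For the base case $k = 1$, we have $d \sim_j d'$ for some type $j \neq i$ (as $d \notin \P$). Since $\dist(c, d) \leq n+1$ and $\dist(c, d') = n+1$, the triangle inequality forces $\dist(c, d) \in \{n, n+1\}$. If $\dist(c, d) = n$, then the configuration $d \sim_j d' \sim_i c'$ satisfies the hypotheses of \itemref{lem:closingsquares}{1}, whence $m_{ij} = 2$, and $d$ itself is the desired $j$-neighbor of $d'$ in $\sphere_n(c)$. If instead $\dist(c, d) = n+1$, I first note that $q_j \geq 3$: were $q_j = 2$, the $j$-panel through $d$ and $d'$ would consist of just these two chambers, but they are equidistant from $c$, contradicting the uniqueness of the projection of $c$ onto this panel. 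Then \itemref{lem:closingsquares}{2} produces $m_{ij} = 2$ along with an auxiliary chamber $e$ satisfying $d \sim_i e \sim_j c'$ and $\dist(c, e) = n$. The four chambers $c', d', d, e$ form a square inside a rank-$2$ residue $\R$ of type $\{i,j\}$, whose chamber graph is $K_{q_i} \times K_{q_j}$ because $m_{ij} = 2$. A case analysis using the gate property pins down $\proj_\R(c)$ as a chamber $f_0$ lying in the $j$-panel through $c'$ and $e$, distinct from both, at distance $n-1$ from $c$. The unique chamber $f \in \R$ with $f \sim_i f_0$ and $f \sim_j d'$ then satisfies $\dist(c, f) = n$ and is the required $j$-neighbor of $d'$.

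For the inductive step $k \geq 2$, pick a minimal gallery $(d = v_0, v_1, \ldots, v_L = d')$ from $d$ to $d'$ that is concave with respect to $c$, as provided by \cref{prop:concave}. A short gate-property argument shows that every intermediate chamber $v_\ell$ (for $\ell < L$) lies outside $\P$ and satisfies $\proj_\P(v_\ell) = d'$. Because the gallery is concave and $v_{L-1}$ is adjacent to $d' = v_L$, we have $\dist(c, v_{L-1}) \in \{n, n+1\}$. Hence $v_{L-1}$ satisfies the hypotheses of the lemma with $\dist(v_{L-1}, d') = 1 < k$, and the induction hypothesis applied to $v_{L-1}$ in place of $d$ yields the conclusion.

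I expect the main obstacle to be the second sub-case of the base case, $\dist(c, d) = n+1$. The closing squares lemmas alone do not directly supply a $j$-neighbor of $d'$: the chamber $e$ they furnish is $j$-adjacent to $c'$ rather than $d'$. The required chamber must be obtained as the fourth corner of a different square inside the rank-$2$ residue $\R$, whose identification rests on a careful computation of $\proj_\R(c)$ together with the a priori observation that $q_j \geq 3$.
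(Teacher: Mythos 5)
Your proof is correct. Since the paper itself does not reprove this lemma (it simply cites \cite[Lemma~2.15]{silva1}), I cannot compare it to a proof ``in the paper''; however, I can compare it with the paper's own generalization, \cref{lem:convextechnicalprojectionstuff}, which covers panel-closed $C$ in place of a single chamber $c$. Both arguments are driven by concave galleries (\cref{prop:concave}) together with the closing squares lemmas (\cref{lem:closingsquares}), and both reduce via a concave gallery from $d'$ to $d$ to the case of chambers adjacent to $d'$. The interesting divergence is in the ``hard'' sub-case where the concave gallery stays at distance $n+1$ from $c$. The generalized lemma in the paper deliberately weakens the conclusion there: it allows the alternative outcome that $d$ lies in an $i$-panel parallel to $\P$, and the argument is an induction on $\dist(d,\P)$ propagating parallelism. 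The singleton statement you are proving has no such escape clause, and your treatment of the $\dist(c,d)=n+1$ sub-case with a rank-two grid analysis --- using both closing squares lemmas, the thickness inequality $q_j\geq 3$ forced by the gate property on the $j$-panel through $d$ and $d'$, the computation $\proj_\R(c)=f_0\in\mathcal{Q}\setminus\{c',e\}$ via compatibility of projections ($\proj_\P\circ\proj_\R=\proj_\P$), and the fourth-corner chamber $f$ --- is exactly the extra work needed to pin down a genuine $j$-neighbor of $d'$ in $\sphere_n(c)$. I verified the details: $f_0\neq c'$ and $f_0\neq e$ both follow from the gate property for $\R$, $\dist(c,f_0)=n-1$, and $\dist(c,f)=n$ with $f\sim_j d'$. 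The inductive step is also sound: $v_{L-1}\notin\P$, $\proj_\P(v_{L-1})=d'$, and concavity excludes $\dist(c,v_{L-1})=n+2$ since a fully decreasing gallery would force $\dist(c,d)>n+1$. This is a clean, self-contained proof that buys a sharper conclusion than the paper's generalized version at the cost of the extra grid computation.
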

\begin{proof}
    This is \cite[Lemma 2.15]{silva1}.
\end{proof}

Here is our generalization to panel-closed sets.
\begin{lemma}
	\label{lem:convextechnicalprojectionstuff}
	Let $C \subseteq \Delta$ be panel-closed and let $\P$ be an $i$-panel as in \itemref{lem:C-spheres}{b}, with $\P\cap\sphere_n(C) = \{ c' \}$.
	Let $d\in\ball_{n+1}(C)\setminus\P$, let $d'=\proj_\P(d)$, and assume that $d' \neq c'$, i.e., $d'\in\sphere_{n+1}(C)$.
	
	Then either $d$ is contained in an $i$-panel parallel to $\P$, or $d'$ is $j$-adjacent to some chamber in $\sphere_n(C)$ with $m_{ij}=2$.
\end{lemma}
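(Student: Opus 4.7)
The plan is to prove, by induction on $m := \dist(d, d')$, the slightly stronger statement that either conclusion~(2) of the lemma holds, or $d$ lies in an $i$-panel $\P_d$ parallel to $\P$ that additionally contains a chamber of $\sphere_n(C)$. Carrying this extra $\sphere_n(C)$-chamber through the induction is essential: it is precisely what will feed into the closing-squares argument in the inductive step.

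For the base case $m = 1$, write $d \sim_k d'$ with $k \neq i$. Since $d \in \ball_{n+1}(C)$ is adjacent to $d' \in \sphere_{n+1}(C)$, one has $\dist(C, d) \in \{n, n+1\}$. If $\dist(C, d) = n$, then \itemref{lem:C-closingsquares}{1} applied to $(d, d', c')$ yields $m_{ki} = 2$, giving conclusion~(2). If $\dist(C, d) = n+1$, then \itemref{lem:C-closingsquares}{2} applied to the same triple produces $d_0 \in \sphere_n(C)$ with $d \sim_i d_0 \sim_k c'$ and $m_{ki} = 2$; the two chambers $c', d' \in \P$ then have distinct projections $d_0, d$ onto the $i$-panel $\P_d$ through $d$ and $d_0$, so \itemref{prop:parallelinrab}{4} gives $\P_d \parallel \P$, and $d_0 \in \P_d \cap \sphere_n(C)$ is the promised chamber.

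For the inductive step $m \geq 2$, I would invoke \cref{prop:C-concave} to choose a minimal gallery $\gamma = (v_0 = d, v_1, \dotsc, v_m = d')$ that is concave with respect to $C$. The last step $v_{m-1} \sim_{k_m} d'$ satisfies $k_m \neq i$ (since $v_{m-1} \notin \P$) and $\dist(C, v_{m-1}) \in \{n, n+1\}$. The subcase $\dist(C, v_{m-1}) = n$ is dispatched by \itemref{lem:C-closingsquares}{1} exactly as in the base, yielding conclusion~(2). Otherwise, the concavity pattern combined with $d \in \ball_{n+1}(C)$ and $d' \in \sphere_{n+1}(C)$ forces the entire gallery into $\sphere_{n+1}(C)$. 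In particular $v_1 \in \sphere_{n+1}(C) \setminus \P$ with $\proj_\P(v_1) = d' \neq c'$, so the strengthened inductive hypothesis applies to $v_1$: either conclusion~(2) already holds (for the same $d'$, which is what we need), or $v_1$ lies in an $i$-panel $\P_{v_1}$ parallel to $\P$ and containing some $d_0^{(1)} \in \sphere_n(C)$.

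Transferring the conclusion from $v_1$ to $d$ is the hard part, and it splits on the type $k_1$ of the step $d \sim_{k_1} v_1$. If $k_1 = i$, then $d \in \P_{v_1}$ and there is nothing further to do. If $k_1 \in i^\perp$, then $d$, $v_1$, and $d_0^{(1)}$ all lie in a common $\{i, k_1\}$-residue, which factors as the product of an $i$-panel and a $k_1$-panel because $m_{ik_1} = 2$; the $i$-panel $\P_d$ of $d$ in this residue is then parallel to $\P_{v_1}$ (hence to $\P$), and \itemref{lem:C-closingsquares}{2} applied to $(d, v_1, d_0^{(1)})$ furnishes a chamber $d^* \in \sphere_n(C)$ with $d \sim_i d^*$, so that $d^* \in \P_d \cap \sphere_n(C)$. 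The main obstacle is the remaining case $k_1 \notin \{i\} \cup i^\perp$, which must be ruled out: applying \itemref{lem:C-closingsquares}{2} to the same triple $(d, v_1, d_0^{(1)})$ with $d \sim_{k_1} v_1 \sim_i d_0^{(1)}$ forces $m_{k_1 i} = 2$, contradicting $k_1 \notin i^\perp$. This last closing-squares step is precisely where the $\sphere_n(C)$-chamber supplied by the strengthened inductive hypothesis is needed.
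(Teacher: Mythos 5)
Your proof is correct. The core technique is the same as the paper's---a concave gallery combined with the closing-squares lemmas, with induction on the length $\dist(d,d')=\dist(d,\P)$---but the induction is organized from the opposite end of the gallery. The paper takes the concave gallery $\gamma=(d',e,\dots,d)$, and in the inductive step it replaces the panel $\P$ by a closer parallel $i$-panel $\P'$ through $e$ (found via \itemref{lem:C-closingsquares}{2}), leaving $d$ fixed; the new projection $\proj_{\P'}(d)=e$ strips off the $\P$-end of the gallery. You instead fix $\P$ and strip off the $d$-end, recursing on $v_1$ and then transferring the conclusion back to $d$ with a final application of \itemref{lem:C-closingsquares}{2}. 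To feed that final closing-squares step you carry a strengthened induction hypothesis (the parallel $i$-panel contains a chamber of $\sphere_n(C)$). That strengthening is in fact automatic: any $i$-panel parallel to $\P$ is not parallel to any panel in $C$ (transitivity of parallelism), hence falls under \itemref{lem:C-spheres}{b}, and since one of its chambers is known to lie in $\sphere_{n+1}(C)$, its unique closest chamber must lie in $\sphere_n(C)$. The paper uses exactly this observation (implicitly) to justify applying the induction hypothesis to $\P'$, so your device trades an explicit invariant in the IH for an implicit structural fact; both routes are sound.
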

\begin{proof}
    By \cref{prop:C-concave}, there is a minimal gallery $\gamma = (d', e, \dots, d)$ from $d'$ to $d$ that is concave with respect to $C$.
    By concavity, either $e \in \sphere_n(C)$, or all chambers of $\gamma$ lie in $\sphere_{n+1}(C)$.
    If $e \in \sphere_n(C)$, then the result already follows from \itemref{lem:C-closingsquares}{1} applied on $c' \sim_i d' \sim_j e$.
    
    Assume now that all chambers of $\gamma$ lie in $\sphere_{n+1}(C)$.
    We will show by induction on $\dist(d, \P)$ that $d$ is contained in an $i$-panel parallel to $\P$.
    Of course, this is obvious if $\dist(d, \P) = 0$, so assume $\dist(d, \P) \geq 1$, so that $e$ really exists. (If $\dist(d, \P)= 1$, we set $e=d$.)
    We can now apply \itemref{lem:C-closingsquares}{2} on $e \sim_j d' \sim_i c'$ to get $m_{ij} = 2$ and to find a chamber $e' \in \sphere_n(C)$ with $e' \sim_i e$.
    In particular, $\P$ is parallel to the $i$-panel $\P'$ through $e$ and $e'$, and $\dist(d, \P') = \dist(d, \P) - 1$.
    We have $\proj_{\P'}(d) = e$, and the subgallery $\gamma' = (e,\dots,d)$ of $\gamma$ is still a concave gallery completely contained in $\sphere_{n+1}(C)$.
    Hence we can apply the induction hypothesis to find that $d$ is contained in an $i$-panel parallel to $\P'$.
    Since parallelism is an equivalence relation, the result follows.
%
\end{proof}

Here is our promised ``extension result''.

\begin{proposition}
\index{convex}
	\label{prop:evenmoretechnicalextensionstuff}
	Let $C \subseteq \Delta$ be panel-closed. Let $\F$ be local data over $I$, and let $g\in\Aut(\Delta)$ be an automorphism mapping chambers in $C$ to harmonious chambers (with respect to $\F$).
	
	Then there exists an automorphism $h\in\Aut(\Delta)$ with the following properties:
	\begin{enumerate}
		\item $\restrict{g}{C}=\restrict{h}{C}$;
		\item for each $i$ and for each $i$-panel $\P$, either $\P$ is parallel to a panel contained in~$C$, or we have $\sigma_\lambda(h,\P)\in F_i$ (or both).
	\end{enumerate}
	Note that, in particular, $h$ maps all chambers in $\Delta$ to harmonious chambers.
\end{proposition}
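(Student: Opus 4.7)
The plan is to build $h$ inductively, extending $g|_C$ shell by shell: define $h$ on $\ball_n(C)$ for $n = 0, 1, 2, \dots$ and take the union. The base case $n = 0$ sets $h|_C := g|_C$, which satisfies the harmony hypothesis by assumption. The inductive hypothesis at step $n$ reads: $h|_{\ball_n(C)}$ is a partial automorphism extending $g|_C$; every $d \in \ball_n(C)$ is harmonious with $h(d)$; and every $i$-panel $\P \subseteq \ball_n(C)$ either is parallel to a panel in $C$ or satisfies $\sigma_\lambda(h,\P) \in F_i$.

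To extend to $\sphere_{n+1}(C)$, invoke \cref{lem:C-spheres}: every $d \in \sphere_{n+1}(C)$ lies in at least one \emph{parent panel}, i.e., an $i$-panel $\P$ of type~(b) with $\P \cap \sphere_n(C) = \{c'\}$. On each such parent panel $\P$, I would pick a local action $\sigma_\P \in \Sym(\Omega_i)$ subject to $\sigma_\P(\lambda_i(c')) = \lambda_i(h(c'))$ as follows. If $\P$ is parallel to some panel $\P^* \subseteq \ball_n(C)$, set $\sigma_\P := \sigma_\lambda(h,\P^*)$; this is independent of the choice of $\P^*$ by \cref{prop:parallellocals} applied inductively. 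Otherwise, freely choose $\sigma_\P \in F_i$ satisfying the above constraint, which is possible because $c'$ and $h(c')$ are harmonious; to avoid conflicts one processes each new parallelism class only once. Then set $h(d')$, for $d' \in \P$, to be the chamber of the $i$-panel through $h(c')$ with $i$-label $\sigma_\P(\lambda_i(d'))$.

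The key obstacle is well-definedness when $d \in \sphere_{n+1}(C)$ has two distinct parent panels $\P_1, \P_2$ of types $i_1 \neq i_2$, with parent chambers $c_1', c_2' \in \sphere_n(C)$. Applying \itemref{lem:C-closingsquares}{1} to $c_1' \sim_{i_1} d \sim_{i_2} c_2'$ gives $m_{i_1 i_2} = 2$ together with a chamber $e \in \sphere_{n-1}(C)$ satisfying $c_1' \sim_{i_2} e \sim_{i_1} c_2'$. By \itemref{prop:parallelinrab}{2} the $i_1$-panel through $e$ and $c_2'$ is parallel to $\P_1$ and, thanks to \cref{lem:C-spheres}, it lies entirely in $\ball_n(C)$; hence $\sigma_{\P_1}$ is forced by the parallel-panel rule, and symmetrically for $\sigma_{\P_2}$. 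Since $\{d, c_1', c_2', e\}$ forms a square in the common $\{i_1,i_2\}$-residue, and $h$ is already a partial automorphism on $\{c_1', c_2', e\}$ by the inductive hypothesis, the chamber $h(d)$ is uniquely determined as the fourth corner of the image square, consistent with both the $\sigma_{\P_1}$- and the $\sigma_{\P_2}$-prescription.

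Closing the induction requires two verifications on $\ball_{n+1}(C)$. For harmony at a newly defined $d$ via parent panel $\P$ of type $i$: in every coordinate $k \neq i$ one has $\lambda_k(d) = \lambda_k(c')$ and $\lambda_k(h(d)) = \lambda_k(h(c'))$, so harmony transfers directly from $(c', h(c'))$ by the inductive hypothesis; in coordinate $i$ it is immediate when $\sigma_\P \in F_i$, while if $\P$ is parallel to a panel in $C$ one uses the harmony hypothesis on that panel to show that $\sigma_\P$ preserves every $F_i$-orbit (equivalently, $\sigma_\P \in \widehat{F_i}$), so that $\lambda_i(h(d)) = \sigma_\P(\lambda_i(d))$ lies in the same $F_i$-orbit as $\lambda_i(d)$. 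The local-action condition is immediate on parent panels by construction; any panel $\P' \subseteq \sphere_{n+1}(C)$ not arising as a parent panel has all its chambers at the same distance from $C$, hence by \cref{lem:C-spheres} is parallel to a panel in $C$, and thus falls under the exceptional clause. Iterating over $n$ exhausts $\Delta = \bigcup_n \ball_n(C)$ and produces the desired $h \in \Aut(\Delta)$.
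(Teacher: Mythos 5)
Your plan replaces the paper's strategy by a genuinely different one: the paper never leaves $\Aut(\Delta)$ --- it keeps the given global automorphism $g$ as a scaffold and corrects its defective local actions sphere by sphere, composing with \emph{genuine} automorphisms supplied by \cite[Proposition 4.2]{caprace2014} (supported by \cref{lem:convextechnicalprojectionstuff} and the closing squares lemma to see that these corrections neither disturb the ball already treated nor interfere with each other), and then takes a limit of automorphisms. You instead discard $g$ except for $g|_C$ and try to build $h$ chamber by chamber on the spheres $\sphere_{n+1}(C)$. The decisive gap is that you never verify that the map you construct is an automorphism of $\Delta$. You check well-definedness at a chamber with two parent panels, but not that adjacencies \emph{inside} $\sphere_{n+1}(C)$ are preserved: two new chambers $d_1\sim_j d_2$ whose common $j$-panel is of type~\ref{lem:C-spheres:a} (entirely contained in the sphere), or which are attached to distinct parent panels, receive their images through different prescriptions, and nothing in your argument shows these images are again $j$-adjacent; likewise injectivity and surjectivity of the limit map are not addressed. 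This is precisely the difficulty that the paper's ``always stay inside $\Aut(\Delta)$'' setup is designed to avoid, and it is also why the statement assumes a global $g\in\Aut(\Delta)$: your proof uses only $g|_C$, so it is implicitly claiming a strictly stronger extension theorem, whose entire content lies in the verifications you omit.

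Two further steps do not go through as written. First, you invoke \cref{prop:parallellocals} to claim that $\sigma_\lambda(h,\P^*)$ is independent of the choice of $\P^*\subseteq\ball_n(C)$; that proposition is about automorphisms of $\Delta$, not about a partial map defined on $\ball_n(C)$, so at best you need a separate argument (via galleries of type $i^\perp$ staying inside $\ball_n(C)$, which itself requires \cref{prop:C-concave}, as $\ball_n(C)$ is not convex). Second, the rule ``one $\sigma_\P$ per parallelism class'' can force a single permutation to satisfy \emph{several} point constraints at once: every parent panel of the class contributes the condition $\sigma_\P(\lambda_i(c'))=\lambda_i(h(c'))$ at its own parent chamber, and in the forced branch you additionally need $\sigma_\lambda(h,\P^*)$ to meet this condition. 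Harmony of chambers only guarantees that each single-point condition can be met by \emph{some} element of $F_i$; you give no argument that one element of $F_i$ meets them all simultaneously, nor that the forced value does. Note how the paper sidesteps exactly this: it only modifies panels that are \emph{not} parallel to any panel of $\ball_n(C)$ (the $n$-defective ones), observes via the closing squares lemma that no chamber lies in two such panels, and lets parallelism transport property~(ii) for free on all other panels, so no cross-class consistency problem ever arises.
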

\begin{proof}
	We will recursively construct a sequence of elements $g_n \in \Aut(\Delta)$ such that $\restrict{g_n}{C}=\restrict{h}{C}$ for all $n$, such that property (ii) holds for all panels $\P$ contained in $\ball_n(C)$, and such that $g_n$ and $g_m$ agree on the ball $\ball_m(C)$ whenever $m<n$. Note that $g_n$ maps chambers in $\ball_n(C)$ to harmonious chambers. For $n=0$, take $g_0=g$.


	Now assume that $n\geq 0$ and that we have constructed $g_n$ with all the required properties. In order to define $g_{n+1}$, we will construct an automorphism $h_n$ that stabilizes $\ball_n(g_n\acts C)$ pointwise and that fixes the mismatching local actions at $\sphere_{n+1}(g_n\acts C)$ --- we can then set $g_{n+1}=h_n\circ g_n$. Already let $C'=g_n\acts C=g\acts C$.
	
	For convenience, we will call a panel \emph{$n$-defective} if it intersects both $\sphere_n(C)$ and $\sphere_{n+1}(C)$  and it does not satisfy property (ii) with respect to $g_n$. Explicitly, if an $i$-panel $\P$ is $n$-defective, then the local action $\sigma_\lambda(g_n,\P)\notin F_i$ and $\P$ is not parallel to any panel contained in $C$. Observe that, if property~(ii) holds for some panel, then it holds for every parallel panel as well, since parallelism is transitive and local actions on parallel panels agree. This implies that an $n$-defective panel cannot be parallel to any panel in $\ball_n(C)$.
	
	Let $\P$ be an $n$-defective $i$-panel. By \cref{lem:C-spheres}, $\P$ intersects $\sphere_n(C)$ in a single chamber $c$. Let $\P'=g_n\acts\P$ and $c' = g_n\acts c$. By the induction hypothesis, $c$ and $c'$ are harmonious. Hence we can find a permutation $f_\P\in F_i$ such that $f_\P\acts\lambda_i(c)=\lambda_i(c')$, and a permutation $\pi_\P$ of the chambers in $\P'$ that makes the diagram below commute.
	\[\begin{tikzcd}[dims={5em}{4em}]
		\P \ar[d,"\lambda_i",swap] \ar[r,"g_n"] &
			\P' \ar[r,"\pi_\P"] &
			\P' \ar[d,"\lambda_i"]\\
		\Omega_i \ar[rr,"f_\P"] && \Omega_i
	\end{tikzcd}\]
	By \cite[Proposition 4.2]{caprace2014}, $\pi_\P$ extends to an automorphism $\widetilde\pi_\P \in \Aut(\Delta)$ that fixes all chambers whose projection onto $\P$ is fixed by $\pi_\P$. Note that $\pi_\P$ fixes $c'$ by construction.
	Consider $d\in\ball_{n+1}(C')\setminus\P'$ and denote $d'=\proj_{\P'}(d)$. If $\dist(d',C')=n$, or in other words if $c'=d'$, then $d'$ is fixed by $\pi_\P$ and $d$ is fixed by $\widetilde\pi_\P$. Otherwise $\dist(d',C')=n+1$. By \cref{lem:convextechnicalprojectionstuff}, $\P'$ is parallel either to the $i$-panel containing $d$, or to an $i$-panel contained in $\ball_n(C')$. However, we have excluded the latter possibility by assuming that $\P$ violates property (ii).
	
	We have thus constructed, for every $n$-defective panel $\P$, an auto\-morphism $\widetilde\pi_\P$ of the building with the property that all chambers of $\ball_{n+1}(C')$ that are moved by $\widetilde\pi_\P$ are contained in $\sphere_{n+1}(C')\cap\R$, with $\R$ the set of all chambers in a panel parallel to $\P'$ (in other words, the residue of type $i\cup i^\perp$ containing $\P'$).

	Next, we claim that no chamber in $\sphere_{n+1}(C')$ lies in more than one $n$-defective panel. Indeed, suppose that $c_1\sim_i d\sim_j c_2$ with $d\in\sphere_{n+1}(C')$ and $c_1,c_2\in\ball_n(C')$. Then by \itemref{lem:C-closingsquares}{1}, the $i$-panel (and also the $j$-panel) through $d$ is parallel to a panel in $\ball_n(C')$, so that it cannot be $n$-defective.
	
	\medskip
	
	We now define
	\[h_n = \prod_\P \widetilde\pi_\P \in \Aut(\Delta),\]
	where the product runs over any set of arbitrarily chosen representatives of all equivalence classes of parallel $n$-defective panels, in an arbitrary order. Note that $h_n$ leaves invariant the set $\ball_n(C')$, since all factors do. In particular, the automorphism $g_{n+1}=h_n\circ g_n$ satisfies $\restrict{g_{n+1}}{C}=\restrict{g_n}{C}=\restrict{g}{C}$.
	
	Moreover, let $\P$ be any panel contained in $\ball_{n+1}(C)$. If $\P\subseteq\ball_n(C)$, then property (ii) holds by the induction hypothesis. If $\P\subseteq\ball_{n+1}(C)$, then $\P$ is parallel to some panel contained in $\ball_n(C)$ and again property (ii) holds by induction. It only remains to verify that property (ii) is valid for an $n$-defective panel $\P$ with respect to $g_{n+1}$. Let $\widetilde\P$ be the representative of $\P$ in the class of parallel $n$-defective panels. Then
	\[\sigma_\lambda(g_{n+1},\P)
		= \sigma_\lambda(g_{n+1},\widetilde\P)
		= \sigma_\lambda(h_n,g_n\acts\widetilde\P) \circ \sigma_\lambda(g_n,\widetilde\P)
		= \pi_{\widetilde\P} \circ \sigma_\lambda(g_n,\widetilde\P)
		= f_{\widetilde\P}\in F_i.\]
	Finally, by construction, $g_{n+1}$ agrees with $g_n$ on the ball $\ball_n(C)$. The sequence of automorphisms $g_0,g_1,g_2,\ldots$ thus obtained, converges to an automorphism satisfying the desired properties.
\end{proof}

\begin{remark}
    The intuitive reason for the assumption that $C$ be panel-closed in the above proposition is the following: If a partial automorphism is defined on a nontrivial subset of a panel only, we would need to extend it to the full panel first. This would require us to assume the partial local actions to be extendable to a full permutation in the local group in the first place. Panel-closure is a sufficient assumption to get rid of this additional technicality.
\end{remark}


\section{Restricted universal groups for right-angled buildings}
\label{sec:restrunivdef}

\begin{definition}
	\label{def:restricted1}
	Let $\F$ be a collection of permutation groups $F_i\leq\Sym(\Omega_i)$, indexed by $i\in I$. Then we define 
	\begin{multline*}
	   \G(\F) = \bigl\{g\in\Aut(\Delta) \bigm| \text{$\sigma_\lambda(g,\P)\in F_i$ for every $i\in I$ and} \\
	       \text{\emph{all but finitely many} $i$-panels $\P$}\bigr\}.
	\end{multline*}
	Note that by \cref{lem:localcomposition}, $\G(\F)$ is closed under composition and inversion, and is hence a subgroup of $\Aut(\Delta)$. We clearly have inclusions
    \[ \U(\F) \leq \G(\F) \leq \Aut(\Delta) . \]
\end{definition}

\begin{definition}
	Let $g\in \G(\F)$. An $i$-panel $\P$ with $\sigma_\lambda(g,\P)\notin F_i$ will be called a \emph{singularity} of $g$. We denote the (finite) set of all singularities of $g$ by $S(g)$.
\end{definition}

Even though the definition of $\G(\F)$ may suggest otherwise, the local action at a singularity cannot just be any permutation.
Indeed, some panels cannot be a singularity at all: Due to \cref{prop:parallellocals}, the set of panels parallel to any singularity must be finite.
This can go wrong in two different ways.
The first is easy:
\begin{lemma}
	\label{lem:locallyinfinitesingularities}
	Assume that $i,j\in I$ such that $m_{ij}=2$. If $q_j$ is infinite, then a panel of type $i$ can never be a singularity of an element $g\in\G(\F)$.
\end{lemma}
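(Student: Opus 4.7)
The plan is to exploit the fact that when $m_{ij}=2$, an $i$-panel is parallel to many other $i$-panels, together with \cref{prop:parallellocals} which ensures all these parallel panels share the same local action. Combining this with the finiteness of the singularity set $S(g)$ will yield the contradiction.

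More concretely, suppose $\P$ is an $i$-panel. Since $m_{ij}=2$, we have $j \in i^\perp$, so $\{i,j\} \subseteq \{i\} \cup \{i\}^\perp$. Let $\R$ be the $\{i,j\}$-residue containing $\P$; because $m_{ij}=2$, this residue is a direct product of an $i$-panel and a $j$-panel, so $\R$ contains exactly $q_j$ distinct $i$-panels (one through each chamber of any $j$-panel in $\R$). All these $i$-panels are contained in the common residue $\R$ of type $\{i\} \cup \{i\}^\perp$, so by \itemref{prop:parallelinrab}{2}, they are pairwise parallel to $\P$.

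Now invoke \cref{prop:parallellocals}: for any $g \in \Aut(\Delta)$, the local action $\sigma_\lambda(g,\P')$ is the same for every $i$-panel $\P'$ parallel to $\P$. In particular, if $\P$ were a singularity of some $g \in \G(\F)$, that is $\sigma_\lambda(g,\P) \notin F_i$, then every one of the $q_j$ many parallel $i$-panels inside $\R$ would also be a singularity of $g$. Since $q_j$ is infinite, this contradicts the defining property of $\G(\F)$ that $S(g)$ be finite, so $\P$ cannot be a singularity.

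There is essentially no obstacle here: the argument is a direct application of the parallelism machinery already set up in \cref{prop:parallelinrab} and \cref{prop:parallellocals}. The only minor point to be careful about is noting that the $i$-panels inside the $\{i,j\}$-residue are genuinely distinct (which is immediate from $m_{ij}=2$, since the residue splits as a product) and that there are $q_j$ of them, so that infinitude of $q_j$ translates into infinitely many forced singularities.
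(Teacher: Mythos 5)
Your argument is correct and is exactly the paper's proof, just written out in more detail: the $i$-panels in a $\{i,j\}$-residue with $m_{ij}=2$ are pairwise parallel, so by \cref{prop:parallellocals} a singularity there would force infinitely many ($q_j$ many) singularities, contradicting finiteness of $S(g)$. No issues.
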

\begin{proof}
	Since the $i$-panels in a residue of type $\{i,j\}$ with $m_{ij}=2$ are parallel, this follows immediately from \cref{prop:parallellocals}.
\end{proof}

Even if all parameters $q_i$ are finite, there can be an infinite number of panels parallel to a given panel.
This situation is captured by the following definition.

\begin{definition}\label{def:ladder}
\begin{enumerate}
    \item 
    	A \emph{ladder} is a rank three building of type
    	\[M = \begin{pmatrix} 1 & 2 & 2\\ 2 & 1 & \mathclap{\infty}\\ 2 & \mathclap{\infty} & 1\end{pmatrix}
    	\qquad
    	\begin{tikzpicture}
    		\path (0,0) node[myvertex] (B) {} +(.5,.75) node[myvertex] (A) {} +(1,0) node[myvertex] (C) {};
    		\draw[myedge] (B) -- node[above] {$\infty$} (C);
    	\end{tikzpicture} \quad . \]
    	The \emph{rungs} of a ladder are the panels of the type corresponding to the isolated node of the diagram. We have visualized the ladder's Coxeter complex in \cref{fig:coxeterladder}.
    \item 
    	A \emph{ladder of $\Delta$} is a residue of $\Delta$ that has the type of a ladder, and its \emph{rungs} will be the corresponding panels of $\Delta$. Clearly, a ladder always has infinitely many rungs.
\end{enumerate}
\end{definition}

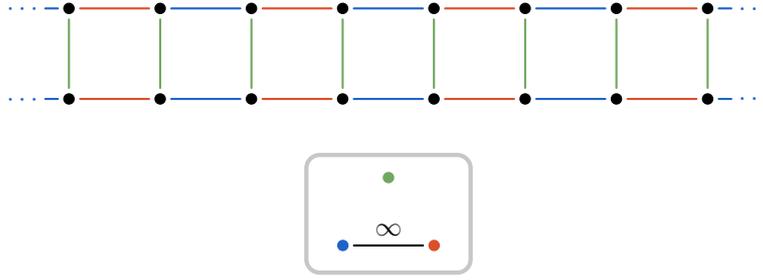
\begin{figure}[!ht]
	\centering
	\begin{tikzpicture}[x=12mm,y=12mm]
		\foreach\i in {0,...,7}
			\foreach\j in {0,1}
				\node[myvertex] (P\i\j) at (\i,\j) {};
		\foreach\i in {0,...,7}
			\draw[myedge,ugentgreen] (P\i0) -- (P\i1);
		\foreach\j in {0,1}
			{\draw[myedge,ugentred] (P0\j) -- (P1\j) (P2\j) -- (P3\j) (P4\j) -- (P5\j) (P6\j) -- (P7\j);
			\draw[myedge,ugentblue] (P1\j) -- (P2\j) (P3\j) -- (P4\j) (P5\j) -- (P6\j);
			\draw[myedge,ugentblue,-dots] (P0\j) -- (-.26,\j);
			\draw[myedge,ugentblue,-dots] (P7\j) -- (7.26,\j);
			}
	\end{tikzpicture}
	\par\vspace*{4ex}
	\begin{tikzpicture}
		\path (0,0) node[myvertex,ugentblue] (B) {}
			+(.5,.75) node[myvertex,ugentgreen] (A) {}
			+(1,0) node[myvertex,ugentred] (C) {};
		\draw[myedge] (B) -- node[above] {$\infty$} (C);
		\draw[myedge,ultra thick,mydarkgray,rounded corners=5pt] (-.4,-.3) rectangle (1.4,1);
	\end{tikzpicture}
\caption{The Coxeter complex of a ladder.}
\label{fig:coxeterladder}
\end{figure}

\begin{lemma}
	\label{lem:rungsnotsingular}
	Let $\R$ be a ladder in $\Delta$. Then a rung of\, $\R$ can never be a singularity of an element $g\in\G(\F)$.
\end{lemma}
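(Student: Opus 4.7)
The plan is to exploit the fact that, inside a ladder, all rungs are parallel to one another, and that any ladder contains infinitely many rungs. Combined with \cref{prop:parallellocals}, this will immediately force any rung-singularity to produce infinitely many singularities, contradicting the definition of $\G(\F)$.

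First, I would fix the notation: let the ladder $\R$ have type $J = \{i,j,k\}$, where $i$ is the type of a rung (the isolated node), so that $m_{ij} = m_{ik} = 2$ while $m_{jk} = \infty$. Then by definition $i^\perp \supseteq \{j,k\}$, hence $\{i\} \cup i^\perp \supseteq J$. In particular, any two $i$-panels contained in $\R$ are both contained in the common residue $\R$ itself, which has type $\{i\} \cup i^\perp$ (on the generators actually occurring). By \itemref{prop:parallelinrab}{2}, this means that any two rungs of $\R$ are parallel. Moreover, because $m_{jk} = \infty$, the Coxeter complex of a ladder (as pictured in \cref{fig:coxeterladder}) is infinite, so $\R$ contains infinitely many rungs.

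Now suppose, for contradiction, that some rung $\P \subseteq \R$ is a singularity of an element $g \in \G(\F)$, i.e.\ $\sigma_\lambda(g,\P) \notin F_i$. By \cref{prop:parallellocals}, the local action of $g$ at any panel parallel to $\P$ equals $\sigma_\lambda(g,\P)$, so every rung of $\R$ is likewise a singularity of $g$. But there are infinitely many such rungs, contradicting the finiteness of $S(g)$ from the definition of $\G(\F)$. Hence no rung can be a singularity.

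I do not anticipate any real obstacle here: once one recognizes that the \emph{entire} ladder residue is of the form $\{i\} \cup i^\perp$ and therefore witnesses parallelism of all of its $i$-panels, the argument is parallel to (and essentially the same as) the proof of \cref{lem:locallyinfinitesingularities}, with infiniteness of the panel-set now coming from the rank-two residue of type $\{j,k\}$ with $m_{jk}=\infty$ rather than from infinitude of the parameter $q_j$.
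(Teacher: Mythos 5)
Your proof is correct and follows essentially the same route as the paper's own (very brief) argument: the rungs of a ladder are pairwise parallel, there are infinitely many of them, and \cref{prop:parallellocals} then forces a rung-singularity to propagate to infinitely many singularities, contradicting the definition of $\G(\F)$. Your extra details (invoking \itemref{prop:parallelinrab}{2} for parallelism and $m_{jk}=\infty$ for infinitude) just make explicit what the paper leaves implicit.
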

\begin{proof}
	Since the rungs of a ladder are parallel, this follows immediately from \cref{prop:parallellocals}.
\end{proof}

In fact, the occurrence of an unbounded set of parallel panels implies the existence of ladders in $\Delta$:
\begin{proposition}
	\label{prop:unboundedrungs}
	Let $\P$ be an $i$-panel of $\Delta$ such that the set of all panels parallel to $\P$ is unbounded. Then $\P$ is the rung of a ladder in $\Delta$.
\end{proposition}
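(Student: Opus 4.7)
The plan is to convert unboundedness of the parallel class into infiniteness of a parabolic Coxeter subgroup, from which the ladder diagram will emerge directly.

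First, I would invoke \itemref{prop:parallelinrab}{2}: two $i$-panels are parallel if and only if they lie in a common residue of type $\{i\}\cup i^\perp$. Since a residue of given type through a given chamber is unique, every panel parallel to $\P$ must in fact lie in the single residue $\R$ of type $\{i\}\cup i^\perp$ that contains $\P$; conversely any two $i$-panels of $\R$ are automatically parallel by the same proposition. Thus the set of panels parallel to $\P$ is precisely the set of $i$-panels in $\R$, whose cardinality is $|\R|/q_i$.

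Since $q_i$ is fixed, unboundedness forces $\R$ to be infinite, which is equivalent to the parabolic Coxeter group $W_{\{i\}\cup i^\perp}$ being infinite. Because $i$ commutes with every element of $i^\perp$, this parabolic decomposes as the direct product $\langle i\rangle \times W_{i^\perp}$, so $W_{i^\perp}$ itself must be infinite. A right-angled Coxeter group is finite if and only if all its generators pairwise commute (yielding an elementary abelian $2$-group), so there exist $j,k\in i^\perp$ with $m_{jk} = \infty$.

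Finally, the Coxeter matrix entries on $\{i,j,k\}$ are $m_{ij} = m_{ik} = 2$ (since $j,k\in i^\perp$) and $m_{jk} = \infty$, which is precisely the ladder diagram of \cref{def:ladder}. Hence the residue of $\Delta$ of type $\{i,j,k\}$ containing $\P$ is a ladder, and $\P$, being of type $i$ — the isolated node — is one of its rungs. There is no serious obstacle in this argument; the only point worth double-checking is the uniqueness step that confines all panels parallel to $\P$ to a single residue of type $\{i\}\cup i^\perp$, after which the proof reduces to a small Coxeter-combinatorial observation.
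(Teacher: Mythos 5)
Your argument is correct, but it takes a genuinely different route from the paper. You confine the whole parallel class to the single residue $\R$ of type $\{i\}\cup i^\perp$ through $\P$ (via \itemref{prop:parallelinrab}{2} and uniqueness of residues of a given type through a chamber) and then invoke the structure theory of right-angled Coxeter groups: $W_{\{i\}\cup i^\perp}\cong \langle i\rangle\times W_{i^\perp}$, and a right-angled Coxeter group is finite exactly when all generators commute, so infiniteness of $W_{i^\perp}$ hands you $j,k\in i^\perp$ with $m_{jk}=\infty$. The paper instead works directly with one far-away parallel panel: it takes a minimal gallery from $c\in\P$ to its projection on a panel at distance at least $\lvert I\rvert$, notes its type is a reduced word in $i^\perp$, and uses the pigeonhole principle (a repeated letter $j$, with some non-commuting $k$ between the two occurrences, since the word is reduced) to produce the same triple $\{i,j,k\}$. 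Your version is shorter and more conceptual; the paper's is purely gallery-combinatorial and yields the explicit quantitative fact that a single parallel panel at distance $\geq\lvert I\rvert$ already forces a ladder. One point to tighten: the pivot should be \emph{unboundedness}, not cardinality. At this stage of the paper local finiteness of $\Delta$ is not yet assumed (cf.\ \cref{lem:locallyinfinitesingularities}), so ``$\R$ infinite'' is \emph{not} equivalent to ``$W_{\{i\}\cup i^\perp}$ infinite'': an infinite parameter $q_j$ with $j\in i^\perp$ makes $\R$ infinite even when the parabolic is finite. The correct (and equally short) bridge is that if $W_{\{i\}\cup i^\perp}$ were finite, then $\R$ would have diameter bounded by the length of its longest element, so the parallel class of $\P$, being contained in $\R$, would be bounded---contradicting the hypothesis. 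With that one-line repair your proof is complete.
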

\begin{proof}
	Let $\P'$ be a panel such that $\dist(\P,\P') \geq |I|$. Let $c\in \P$ and let $c'=\proj_{\P'}(c)$. A minimal gallery $\gamma$ from $c$ to $c'$ has only types in $i^\perp$ (by \cref{prop:parallelinrab}) and has length $\dist(\P,\P') > |i^\perp|$. By the pigeonhole principle, there is a type $j\in i^\perp$ that occurs at least twice in the type of $\gamma$. In between, there must exist a type $k\in i^\perp$ in $\gamma$ such that $m_{jk}=\infty$, since the type of $\gamma$ is a reduced word. Hence, we have found three types $\{i,j,k\}$ satisfying $m_{ij}=m_{ik}=2$ and $m_{jk}=\infty$, i.e., the $i$-panel $\P$ is the rung of at least one ladder in $\Delta$.
\end{proof}

Because of \cref{lem:rungsnotsingular}, it is possible that allowing for a finite number of singularities in fact does not expand the group at all.
\begin{definition}
	Let $G$ be a simple, undirected graph. A vertex~$v$ of $G$ will be called a \emph{rung} if there exist two other, adjacent vertices $w_1\sim w_2$ in $G$ such that $v$ is adjacent to neither $w_1$ nor $w_2$. We call the graph $G$ \emph{ladderful} if every vertex is a rung.
	The right-angled diagram $M$ is called \emph{ladderful} if its underlying graph is ladderful.
	In \cref{fig:smallladderfulgraphs}, we have drawn all ladderful diagrams of rank $\leq 6$.
\end{definition}



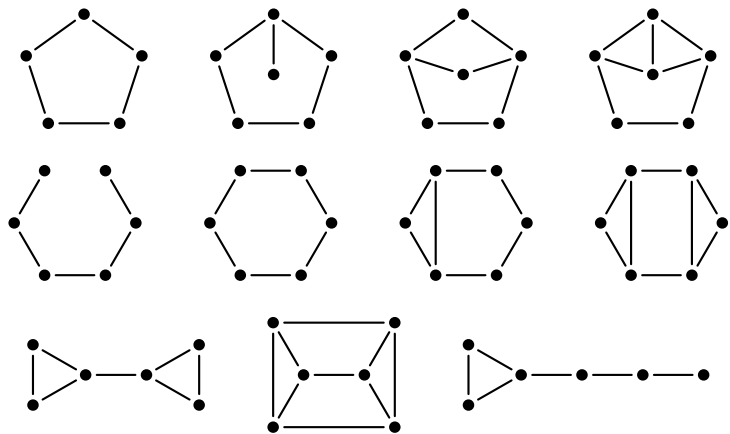
\begin{figure}[!th]
\newcommand{\pentagon}{\foreach\i in {0,...,4} \node[myvertex] (P\i) at (-72*\i-126:1) {}; \node[myvertex] (P5) at (0,0) {};}
\newcommand{\hexagon}{\foreach\i in {0,...,5} \node[myvertex] (P\i) at (-60*\i-120:1) {};}
	\centering
	\begin{tikzpicture}[x=8mm,y=8mm]
		\foreach\i in {0,...,4} \node[myvertex] (P\i) at (-72*\i-126:1) {};
		\draw[myedge] (P2) -- (P3) -- (P4) -- (P0) -- (P1) -- (P2);
	\end{tikzpicture}
	\qquad
	\begin{tikzpicture}[x=8mm,y=8mm]
		\pentagon;
		\draw[myedge] (P2) -- (P3) -- (P4) -- (P0) -- (P1) -- (P2) -- (P5);
	\end{tikzpicture}
	\qquad
	\begin{tikzpicture}[x=8mm,y=8mm]
		\pentagon;
		\draw[myedge] (P1) -- (P2) -- (P3) -- (P4) -- (P0) -- (P1) -- (P5) -- (P3);
	\end{tikzpicture}
	\qquad
	\begin{tikzpicture}[x=8mm,y=8mm]
		\pentagon;
		\draw[myedge] (P1) -- (P2) -- (P3) -- (P4) -- (P0) -- (P1) -- (P5) -- (P3) (P5) -- (P2);
	\end{tikzpicture}
	\par\bigskip\medskip
	\begin{tikzpicture}[x=8mm,y=8mm]
		\hexagon;
		\draw[myedge] (P3) -- (P4) -- (P5) -- (P0) -- (P1) -- (P2);
	\end{tikzpicture}
	\qquad
	\begin{tikzpicture}[x=8mm,y=8mm]
		\hexagon;
		\draw[myedge] (P0) -- (P1) -- (P2) -- (P3) -- (P4) -- (P5) -- (P0);
	\end{tikzpicture}
	\qquad
	\begin{tikzpicture}[x=8mm,y=8mm]
		\hexagon;
		\draw[myedge] (P0) -- (P1) -- (P2) -- (P3) -- (P4) -- (P5) -- (P0) -- (P2);
	\end{tikzpicture}
	\qquad
	\begin{tikzpicture}[x=8mm,y=8mm]
		\hexagon;
		\draw[myedge] (P0) -- (P1) -- (P2) -- (P3) -- (P4) -- (P5) -- (P0) -- (P2) (P3) -- (P5);
	\end{tikzpicture}
	\par\bigskip\medskip
	\begin{tikzpicture}[x=8mm,y=8mm]
		\path (0,0) node[myvertex] (P0) {} +(-150:1) node[myvertex] (P1) {} +(150:1) node[myvertex] (P2) {}
			++(1,0) node[myvertex] (P3) {} +(30:1) node[myvertex] (P4) {} +(-30:1) node[myvertex] (P5) {};
		\draw[myedge] (P0) -- (P1) -- (P2) -- (P0) -- (P3) -- (P4) -- (P5) -- (P3);
	\end{tikzpicture}
	\qquad
	\begin{tikzpicture}[x=8mm,y=8mm]
		\path (0,0) node[myvertex] (P0) {} +(-120:1) node[myvertex] (P1) {} +(120:1) node[myvertex] (P2) {}
			++(1,0) node[myvertex] (P3) {} +(60:1) node[myvertex] (P4) {} +(-60:1) node[myvertex] (P5) {};
		\draw[myedge] (P0) -- (P1) -- (P2) -- (P0) -- (P3) -- (P4) -- (P5) -- (P3) (P2) -- (P4) (P1) -- (P5);
	\end{tikzpicture}
	\qquad
	\begin{tikzpicture}[x=8mm,y=8mm]
		\path (3,0) node[myvertex] (P0) {} (2,0) node[myvertex] (P1) {}
			(1,0) node[myvertex] (P2) {} (0,0) node[myvertex] (P3) {}
			(150:1) node[myvertex] (P4) {} (-150:1) node[myvertex] (P5) {};
		\draw[myedge] (P0) -- (P1) -- (P2) -- (P3) -- (P4) -- (P5) -- (P3);
	\end{tikzpicture}
	\caption{All irreducible ladderful diagrams of rank $\leq 6$. (We have omitted the labels $\infty$). The first diagram is the only one of rank $5$; it is the diagram of the Bourdon building $I_{5,2}$.}
	\label{fig:smallladderfulgraphs}
\end{figure}

\begin{corollary}\label{cor:ladderful}
    If $M$ is ladderful, then $\G(\F) = \U(\F)$.
\end{corollary}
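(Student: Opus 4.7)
The plan is straightforward: the inclusion $\U(\F) \leq \G(\F)$ is immediate from the definitions, so it suffices to prove the reverse inclusion by showing that, under the ladderful assumption, no panel of $\Delta$ can be a singularity of any element of $\G(\F)$.

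First, I would fix an arbitrary $i \in I$ and an arbitrary $i$-panel $\P$ of $\Delta$. Since $M$ is ladderful, the vertex $i$ is a rung of the underlying graph of the diagram, so there exist $j,k \in I$ with $j \sim k$ in the diagram (that is, $m_{jk} = \infty$) such that $i$ is adjacent to neither (that is, $m_{ij} = m_{ik} = 2$). The triple $\{i,j,k\}$ is therefore precisely the type of a ladder in the sense of \cref{def:ladder}, with $i$ being the isolated node.

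Next, consider the $\{i,j,k\}$-residue $\R$ of $\Delta$ containing $\P$. By construction, $\R$ is a ladder of $\Delta$ and $\P$ is one of its rungs. \Cref{lem:rungsnotsingular} then yields that $\P$ cannot be a singularity of any element of $\G(\F)$. Since both $i$ and $\P$ were arbitrary, no panel of $\Delta$ can be a singularity; hence $S(g) = \emptyset$ for every $g \in \G(\F)$, which means $g \in \U(\F)$.

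I do not expect any real obstacle: the entire argument is a repackaging of \cref{lem:rungsnotsingular}, and the only thing to check is that the graph-theoretic definition of ``rung'' in the diagram matches the building-theoretic notion, which is immediate from the right-angled convention that edges in the diagram correspond to $m_{ij} = \infty$ and non-edges to $m_{ij} = 2$.
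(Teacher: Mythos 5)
Your proof is correct and follows essentially the same route as the paper: every panel is the rung of some ladder by the ladderful hypothesis, and \cref{lem:rungsnotsingular} then forbids any singularities. The paper states this more tersely but makes the identical argument.
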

\begin{proof}
    Since $M$ is ladderful, every panel of $\Delta$ is the rung of some ladder in $\Delta$.
    By \cref{lem:rungsnotsingular}, this implies that an element $g \in \G(\F)$ cannot have any singularities at all.
    Hence $\G(\F) = \U(\F)$.
\end{proof}

Even for singularities that are not the rung of a ladder, there is some restriction on the local action. Recall the notion of Young overgroups from \cref{def:young}.

%

\begin{proposition}
	\label{prop:localinyoung}
	Let $g\in\G(\F)$ and let $\P$ be a singularity of $g$ of type $i$. Assume that $i$ is not an isolated node of the diagram. Then at least one of the following holds:
	\begin{itemize}
		\item the local action $\sigma_\lambda(g,\P)$ is contained in the Young overgroup $\widehat F_i$ of the local group $F_i$, or
		\item $F_i$ has at least two infinite orbits, while $F_j$ has finite degree for all $j\neq i$.
	\end{itemize}
\end{proposition}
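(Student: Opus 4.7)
The plan is to prove the contrapositive: assuming $\sigma := \sigma_\lambda(g,\P) \notin \widehat F_i$, I will show that $q_j$ is finite for every $j \neq i$ and that $F_i$ has at least two infinite orbits. Throughout I fix a ``bad'' color $\omega \in \Omega_i$ with $\tau := \sigma(\omega) \notin F_i\cdot\omega$, and let $c \in \P$ be the chamber with $\lambda_i(c) = \omega$, so that $\lambda_i(gc) = \tau$.

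Finiteness of the $q_j$ splits according to $m_{ij}$. For $j$ with $m_{ij}=2$, since $\P$ is a singularity, \cref{lem:locallyinfinitesingularities} directly gives $q_j < \infty$. For $j$ with $m_{ij}=\infty$, consider the $j$-panel $\P_c^j$: every $e \in \P_c^j \setminus \{c\}$ satisfies $\lambda_i(e)=\omega$ and $\lambda_i(ge)=\tau$, so the $i$-panel $\P_e$, which differs from $\P$, must be singular (otherwise $\sigma_\lambda(g,\P_e)\in F_i$ would send $\omega$ to $\tau$, contradicting $\tau\notin F_i\cdot\omega$). Distinct choices of $e$ yield pairwise distinct $\P_e$, producing $q_j-1$ singularities, and finiteness of the singularity set of $g$ then forces $q_j < \infty$.

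To argue the orbit condition, I would first isolate a purely permutation-theoretic lemma: \emph{if $F_i$ has at most one infinite orbit, then every $\sigma' \in \Sym(\Omega_i) \setminus \widehat F_i$ has at least two ``bad'' elements, i.e., two distinct elements $\omega',\alpha$ with $\sigma'(\omega') \notin F_i\cdot\omega'$ and $\sigma'(\alpha) \notin F_i\cdot\alpha$.} Indeed, if $\sigma'(\omega') = \tau'$ with $\omega' \in O'_1,\tau' \in O'_2$ and $O'_1\neq O'_2$, then under the hypothesis at least one of $O'_1, O'_2$ is finite; a cardinality/pigeonhole argument applied to the bijection $\sigma'$ (or $(\sigma')^{-1}$) restricted to that finite orbit then produces a second bad element $\alpha\neq\omega'$ (some element of the finite orbit must be an image from outside it to balance the one that leaves).

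Assuming for contradiction that $F_i$ has at most one infinite orbit, I would then construct an infinite chain of singular $i$-panels $\P_0 = \P, \P_1, \P_2, \ldots$ inside the $\{i,j\}$-residue through $c$, for some fixed $j \neq i$ with $m_{ij}=\infty$ (available since $i$ is not isolated). At stage $n \geq 1$ the chain enters $\P_n$ carrying a propagated bad color $\omega_{n-1}^*$; the lemma furnishes a second bad color $\omega_n^* \neq \omega_{n-1}^*$ at $\P_n$, whose labelled chamber lies in a $j$-panel distinct from the one used to enter $\P_n$. Stepping across that new $j$-panel produces a singular $\P_{n+1}$ carrying the bad pair at $\omega_n^*$, by the same forcing argument as in the $q_j$-finiteness step. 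Because the $\{i,j\}$-residue with $m_{ij}=\infty$ is a tree, the walk never backtracks, so the $\P_n$ are pairwise distinct, contradicting the finiteness of the singularity set of $g \in \G(\F)$. The main obstacle is precisely this combinatorial propagation: the two-bad-color lemma is what guarantees that the chain strictly extends at every step, which is why the proposition's exceptional clause requires \emph{two} infinite orbits rather than one.
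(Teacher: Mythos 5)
Your proposal is correct and follows essentially the same route as the paper's proof: forcing new singular $i$-panels through chambers adjacent to $\P$ in a transversal type to bound the $q_j$, and then iterating a ``second bad colour'' counting argument along a non-backtracking alternating chain in a residue of type $\{i,k\}$ with $m_{ik}=\infty$ to contradict the finiteness of $S(g)$. The only differences are cosmetic: the paper treats all $j\neq i$ uniformly instead of splitting off the $m_{ij}=2$ case via \cref{lem:locallyinfinitesingularities}, and rather than isolating your permutation-theoretic lemma it fixes the putative finite orbit $X=F_i\cdot x$ and shows directly that both orbits of $x$ and of $\sigma_\lambda(g,\P)(x)$ must be infinite.
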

\begin{proof}
	Assume that $\sigma = \sigma_\lambda(g,\P)$ is not contained in $\widehat F_i$, so there exists a color $x\in\Omega_i$ such that $x$ and $\sigma\!\acts x$ are contained in \emph{different} $F_i$-orbits. Let $c\in\P$ be such that $\lambda_i(c)=x$, let $j\neq i$, and let $d$ be any chamber $j$-adjacent to $c$. Since $\lambda_i(c)=\lambda_i(d)$ and $\lambda_i(g\acts c)=\lambda_i(g\acts d)$, the $i$-panel containing $d$ is again a singularity. As there are only finitely many singularities, this implies that the parameter $q_j$ is finite, for every $j\neq i$.
	
	Next, we claim that both orbits $F_i\acts x$ and $F_i\acts(\sigma\!\acts x)$ are infinite. Assume by means of contradiction that $X = F_i\acts x$ is finite. Since $\sigma$ is a permutation and $x\in X$ with $\sigma\!\acts x\notin X$, there exists a color $y\notin X$ with $\sigma\!\acts y\in X$. Let $c'\in\P$ be such that $\lambda_i(c')=y$ and note that $c\neq c'$. Since $i$ is not an isolated node in the diagram, there exists some $k\in I$ with $m_{ik}=\infty$; choose $d\sim_k c'$. Since $\lambda_i(d) = \lambda_i(c') = y$ and $\lambda_i(g \acts d) = \lambda_i(g \acts c') = \sigma \acts y$ are in different $F_i$-orbits, the local action at the $i$-panels containing $d$ is again a permutation not contained in~$\widehat F_i$. We can thus repeat this construction to obtain an infinite chain
	\[  c \sim_i c' \sim_k d \sim_i d' \sim_k \dotsm , \]
	every chamber of which lies in a singularity of type $i$. This contradiction shows that the orbit $F_i\acts x$ cannot be finite. Similarly, $F_i\acts(\sigma\!\acts x)$ cannot be finite.
\end{proof}

It follows from \cref{prop:localinyoung} that local actions at singularities are contained in the Young overgroups of the local groups, except for possibly one single type, depending on the finiteness of the parameters. Motivated by this observation and by \cref{lem:locallyinfinitesingularities}, it makes sense to restrict our setting to \emph{locally finite} right-angled buildings for the most interesting results. In particular, we then have the following corollary.

\begin{corollary}
	\label{cor:younggroupsinlocfincase}
	Assume that $\Delta$ is locally finite. Then we have an inclusion $\G(\F) \leq \U(\Fwidehat)$, with $\Fwidehat$ the local data obtained from the Young overgroups of the local groups in $\F$.
\end{corollary}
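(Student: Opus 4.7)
The plan is to fix an arbitrary $g\in\G(\F)$ and an arbitrary $i$-panel $\P$, and to show that $\sigma_\lambda(g,\P)\in\widehat F_i$; once this is established for every panel, $g\in\U(\Fwidehat)$ follows by definition. If $\P$ is not a singularity of $g$, there is nothing to do, since the definition of $\G(\F)$ gives $\sigma_\lambda(g,\P)\in F_i\subseteq\widehat F_i$. So the real content concerns the finitely many singularities.

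For a singularity $\P$ of type $i$ with $i$ a non-isolated node of the diagram, I would simply invoke \cref{prop:localinyoung}. That proposition offers two alternatives: either $\sigma_\lambda(g,\P)$ already lies in $\widehat F_i$, in which case we are done, or $F_i$ admits at least two \emph{infinite} orbits on $\Omega_i$. Under the standing local-finiteness assumption, the color set $\Omega_i$ has cardinality $q_i<\infty$, so no orbit of $F_i$ can be infinite; the second alternative is vacuous and only the first survives.

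The only gap left by \cref{prop:localinyoung} is the case in which $i$ is an isolated node of the diagram, which I expect to be the main subtlety. Here I would argue directly via parallelism: since $i$ is isolated, $i^\perp=I\setminus\{i\}$, hence $\{i\}\cup i^\perp=I$, so by \itemref{prop:parallelinrab}{2} any two $i$-panels in a common residue of type $I$ are parallel, i.e., all $i$-panels of the (connected) building $\Delta$ are mutually parallel. In the locally finite setting, whenever $\Delta$ contains infinitely many $i$-panels, \cref{prop:parallellocals} forces $\sigma_\lambda(g,\cdot)$ to be constant across infinitely many of them; since $g$ has only finitely many singularities by assumption, that common permutation must already lie in $F_i\subseteq\widehat F_i$, so in fact no $i$-panel at an isolated node can be a singularity at all. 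Combining the three cases covers every panel and yields the inclusion $\G(\F)\leq\U(\Fwidehat)$.
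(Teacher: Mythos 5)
Your handling of the main case is exactly the paper's: the published proof consists of citing \cref{prop:localinyoung}, and, as you observe, local finiteness makes the second alternative there vacuous (no orbit of $F_i$ on the finite set $\Omega_i$ can be infinite), so singular panels of non-isolated type have local action in $\widehat F_i$, while non-singular panels are trivial. Where you go beyond the paper is the isolated-node case, which \cref{prop:localinyoung} formally excludes and which the one-line proof in the paper passes over in silence; your parallelism argument is correct: for isolated $i$ one has $\{i\}\cup i^\perp=I$, so any two $i$-panels lie in the unique residue of type $I$ and are parallel by \itemref{prop:parallelinrab}{2}, hence carry the same local action by \cref{prop:parallellocals}, and if there are infinitely many $i$-panels this forces that common action into $F_i$, i.e., no singularities of type $i$ at all. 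The only caveat is your closing claim that the three cases cover every panel: the case of an isolated $i$ with only \emph{finitely} many $i$-panels is left open, and it cannot be closed, since it occurs exactly when the diagram has no $\infty$-edges at all, so that $\Delta$ is a finite building, every panel may simultaneously be a singularity, $\G(\F)$ is the full (type-preserving) automorphism group, and the stated inclusion genuinely fails whenever some $F_i$ is intransitive (compare \cref{lem:reducible}(i)). This degenerate situation is an imprecision in the statement that the paper's own proof ignores as well, so your argument is, if anything, more careful than the original; just phrase the isolated-node conclusion as conditional on $\Delta$ being infinite (or the diagram having at least one $\infty$-edge) rather than unconditional.
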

\begin{proof}
	This follows immediately from \cref{prop:localinyoung}.
\end{proof}

\myblock{From now on, we will always assume that the building $\Delta$ is locally finite. In particular, the local groups $F_i$ are permutation groups of finite degree.}

\begin{definition}
	\label{def:restricted2}
	Let $\F$ and $\Facute$ be two collections of local data for $\Delta$, satisfying
	\[F_i \leq \acute F_i \leq \widehat F_i \leq \Sym(\Omega_i)\]
	for every $i\in I$. In particular, $F_i$ and $\acute F_i$ have identical orbits.
	We define the \emph{restricted universal group} of $\F$ and $\Facute$ over $\Delta$ as
	\[\G(\F,\Facute) = \G(\F) \cap{} \U(\Facute).\]
	In other words, $\G(\F,\Facute)$ is the group of all automorphisms that locally act like permutations in $F_i$ but with a finite number of exceptions for which the local action still follows a prescribed behavior, namely a permutation in $\acute F_i$. We will continue to refer to those exceptions as \emph{singularities}, i.e., a singularity of $g \in \G(\F,\Facute)$ is an $i$-panel $\P$ such that $\sigma_\lambda(g,\P)\in\acute F_i\setminus F_i$. The set of all singularities will still be denoted as $S(g)$.
\end{definition}

The local groups $\acute F_i$ can be chosen between $F_i$ and $\widehat F_i$. In the extreme cases, we clearly have
\[ \G(\F,\F) = \U(\F)  \qquad\text{and}\qquad  \G(\F,\Fwidehat) = \G(\F) , \]
where the latter equality follows by \cref{cor:younggroupsinlocfincase}.

\begin{remark}
	Restricted universal groups were first introduced by Adrien Le Boudec in \cite{leboudec} in the setting of trees. The name originates from \cite{caprace_dense}, where the authors remark the analogy with \emph{restricted direct products}. There are a few related constructions in the literature, but especially in topological group theory, the restricted direct product of a collection of groups $\{G_i\}_{i\in I}$ with respect to subgroups $\{H_i\leq G_i\}_{i\in I}$ (over any index set $I$) is defined to be the subgroup
	\[ \Bigl\{ (g_i)_{i\in I}\in \prod_{i\in I} G_i \Bigm| \text{$g_i\in H_i$ for all but finitely many $i\in I$}\Bigr\}\]
	of the standard direct product. Just like the restricted direct product imposes additional constraints on all but finitely many factors, automorphisms in $\G(\F,\Facute)$ are essentially automorphisms in $\U(\Facute)$ with additional constraints on all but finitely many panels.
\end{remark}


\section{Permutational properties}\label{sec:permutational}

It is natural to focus on irreducible buildings. However, passing from reducible to irreducible buildings depends in a subtle way on the existence of isolated nodes (although it is not difficult).
\begin{lemma}\label{lem:reducible}
	Let $\Delta$ be a reducible right-angled building over $I$. Let $J_1,\ldots, J_m$ be the connected components of the diagram with at least two nodes, and let $J$ be their union. Let $k_1,\ldots,k_n$ be the isolated nodes of the diagram, and let $K$ be their union.
	\begin{enumerate}
		\item If $m=0$ (i.e., all nodes are isolated), then \[ \G_\Delta(\F,\Facute) = \U_\Delta(\Facute) \cong \acute F_{k_1} \times {\cdots} \times \acute F_{k_n} . \]
		\item If $m=1$, then
		      \begin{align*}
		      \G_\Delta(\F,\Facute)
				&\cong \G_{\R_J}(\restrict\F J,\,\restrict\Facute J) \times \U_{\R_K}(\restrict\F K) \\
				&\cong \G_{\R_J}(\restrict\F J,\,\restrict\Facute J) \times F_{k_1} \times {\cdots} \times F_{k_n} ,
		      \end{align*}
			where $\R_J$ and $\R_K$ are residues of types $J$ and $K$, respectively.
		\item If $m\geq 2$, then \[ \G_\Delta(\F,\Facute) = \U_\Delta(\F) \cong \U_{\R_1}\!\left(\textstyle\restrict\F{J_1}\right) \times {\cdots} \times \U_{\R_m}\!\left(\textstyle\restrict\F{J_m}\right) . \]
	\end{enumerate}
	In particular, if the diagram is reducible and has no isolated nodes, then $\G_\Delta(\F,\Facute) = \U_\Delta(\F)$.
\end{lemma}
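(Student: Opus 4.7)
The plan is to determine, case by case, exactly which panels may host a singularity of an element $g \in \G_\Delta(\F,\Facute)$, and then combine this analysis with the standard direct-product decomposition of a right-angled building with reducible diagram. The key preliminary claim I would isolate first is the following criterion: a type $i \in I$ admits singularities (i.e., some $i$-panel can be a singularity of some $g \in \G_\Delta(\F,\Facute)$) if and only if $i^\perp$ contains no pair $j,l$ with $m_{jl}=\infty$. If such a pair exists, every $i$-panel is a rung of the ladder residue of type $\{i,j,l\}$, so \cref{lem:rungsnotsingular} forbids singularities. Conversely, if $i^\perp$ consists of mutually commuting generators, then by \itemref{prop:parallelinrab}{2} minimal galleries between parallel $i$-panels have reduced types lying in $i^\perp$ of length at most $|i^\perp|$, so the parallel class of any $i$-panel is bounded and hence finite by local finiteness, removing the obstruction coming from \cref{prop:parallellocals}.

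In case (iii), the criterion forces \emph{every} type in $I$ to be a rung. Indeed, if $i$ belongs to some component $J_\ell$, then any other component $J_{\ell'}$ (which exists as $m \geq 2$) lies entirely in $i^\perp$ and, being connected of size at least two with edge labels in $\{2,\infty\}$, contains a pair of non-commuting generators; if $i$ is an isolated node, the same applies with any $J_\ell \subseteq i^\perp$. Hence no singularities are possible, giving $\G_\Delta(\F,\Facute) = \U_\Delta(\F)$. The stated decomposition then follows from the classical fact that a right-angled building whose diagram splits as a disjoint union decomposes as a direct product of the corresponding buildings, and that local data and local actions are compatible with this product.

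In case (ii), the same criterion shows that singularities are forbidden at every isolated type $k \in K$, since $k^\perp \supseteq J$ and $J$ is a non-trivial connected component containing non-commuting generators. The product decomposition $\Delta \cong \R_J \times \R_K$ induces $\Aut(\Delta) \cong \Aut(\R_J) \times \Aut(\R_K)$; restricting $g \in \G_\Delta(\F,\Facute)$ to these factors, the $\R_J$-component is constrained only to lie in $\G_{\R_J}(\restrict\F J, \restrict\Facute J)$ (singularities are allowed at types in $J$), while the $\R_K$-component must lie in $\U_{\R_K}(\restrict\F K)$. Applying case (i) to the finite residue $\R_K$ then yields the identification with $F_{k_1} \times \cdots \times F_{k_n}$.

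Finally, case (i) is essentially immediate: if all nodes of $I$ are isolated, then $\Delta$ is the Cartesian product of the rank-one buildings of sizes $q_{k_i}$, hence is finite, so the condition of finitely many singularities is vacuous and $\G_\Delta(\F,\Facute) = \U_\Delta(\Facute)$; the identification with $\acute F_{k_1} \times \cdots \times \acute F_{k_n}$ is the rank-one description of $\U$ applied to each factor. The main obstacle is really the preliminary criterion relating admissible singularity types to the absence of non-commuting pairs in $i^\perp$; once this is settled, the three cases reduce to careful bookkeeping of the standard product decomposition of right-angled buildings.
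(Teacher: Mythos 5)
Your proposal is correct and takes essentially the same route as the paper: it handles case (i) by finiteness of $\Delta$, case (ii) via the product decomposition $\Delta \cong \R_J \times \R_K$ together with the observation that isolated types are rung types (so \cref{lem:rungsnotsingular} forbids singularities there), and case (iii) by showing every type is a rung when $m \geq 2$, forcing $\G_\Delta(\F,\Facute) = \U_\Delta(\F)$. Two minor remarks: the ``if'' direction of your preliminary criterion is never actually used (and as stated it would additionally require $\acute F_i \neq F_i$ and an explicit construction such as \cref{lem:restrictedlocallemma}), and in case (ii) the inclusion $\G_{\R_J}(\restrict\F J,\,\restrict\Facute J) \times \U_{\R_K}(\restrict\F K) \subseteq \G_\Delta(\F,\Facute)$ deserves the explicit (easy) remark that each singular panel of $\R_J$ gives rise to only finitely many panels of $\Delta$ because $\R_K$ is finite, which is exactly the point the paper makes.
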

\begin{proof}
	\begin{enumerate}
        \item If $m=0$, then $\Delta$ is a finite building so that all panels can be singularities at the same time, hence $\G_\Delta(\F,\Facute) = \U_\Delta(\Facute)$.
        \item If $m=1$, then $\Delta$ is isomorphic to the direct product of an infinite building $\R_J$ with a finite building $\R_K$. Then, viewing the chambers of $\Delta$ as pairs of chambers in $\R_J$ and $\R_K$, we can identify $\G_\Delta(\F,\Facute)$ with a subgroup $H$ of $\U_{\R_J}(\restrict\Facute J) \times \U_{\R_K}(\restrict\Facute K)$. Since $\R_K$ is finite, each potential singularity $\P$ in $\R_J$ corresponds to finitely many panels in $\R_J \times \R_K$, hence $\G_{\R_J}(\restrict\F J,\,\restrict\Facute J) \times 1$ is contained in $H$. It is obvious that also $1 \times \U_{\R_K}(\restrict\F K) \leq H$, so we already get
            \[ \G_{\R_J}(\restrict\F J,\,\restrict\Facute J) \times \U_{\R_K}(\restrict\F K) \leq H . \]
            Conversely, if $g \in H$, then no panel of $\R_K$ can be a singularity for $g$ in $\R_J \times \R_K$ because $\R_J$ is infinite (or because it is contained in a ladder of $\Delta$), so the restriction of $g$ to $\R_K$ is contained in $\U_{\R_K}(\restrict\F K)$.
            It is obvious that the restriction of $g$ to $\R_J$ is contained in $\G_{\R_J}(\restrict\F J,\,\restrict\Facute J)$, and the conclusion follows.
        \item If $m=2$, then the diagram $M$ is ladderful, so by \cref{lem:rungsnotsingular}, we have $\G_\Delta(\F) = \U_\Delta(\F)$. It follows that
        \[ \G_\Delta(\F,\Facute) = \G_\Delta(\F) \cap \U_\Delta(\Facute) = \U_\Delta(\F) \cap \U_\Delta(\Facute) = \U_\Delta(\F) . \qedhere \]
	\end{enumerate}
\end{proof}

The next few propositions follow for free from the inclusions $\U(\F)\leq\G(\F,\Facute)\leq\U(\Facute)$ and the corresponding properties of the universal groups.
\begin{proposition}
	\label{prop:restrictedorbits}
	Two residues lie in the same orbit of $\G(\F,\Facute)$ if and only if they are of the same type and harmonious. 
\end{proposition}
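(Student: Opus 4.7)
The plan is to exploit the sandwich inclusions $\U(\F) \leq \G(\F,\Facute) \leq \U(\Facute)$ together with \cref{prop:universalorbits}, which already characterizes orbits of the (unrestricted) universal groups in terms of harmoniousness. The only subtlety to flag is that \emph{harmonious} refers to orbits of local groups, and here we have two candidate local data; however, the standing hypothesis $F_i \leq \acute F_i \leq \widehat F_i$ forces $F_i$ and $\acute F_i$ to have identical orbits on $\Omega_i$ (since $\widehat F_i$ is the Young overgroup of $F_i$), so ``harmonious with respect to $\F$'' and ``harmonious with respect to $\Facute$'' are one and the same condition.

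For the forward implication, suppose $g \in \G(\F,\Facute)$ maps a residue $\R$ to a residue $\R'$. Since $g$ is in particular an automorphism of $\Delta$, it preserves the type of residues, so $\R$ and $\R'$ have equal type. Moreover, $g \in \G(\F,\Facute) \leq \U(\Facute)$, so \cref{prop:universalorbits} applied to the local data $\Facute$ immediately gives that $\R$ and $\R'$ are harmonious with respect to $\Facute$, which by the preceding observation is the same as being harmonious with respect to $\F$.

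For the backward implication, suppose $\R$ and $\R'$ have the same type and are harmonious. Applying \cref{prop:universalorbits} to the local data $\F$ produces an element $g \in \U(\F)$ sending $\R$ to $\R'$. Since $\U(\F) \leq \G(\F,\Facute)$, this $g$ already lies in $\G(\F,\Facute)$, completing the proof.

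There is really no main obstacle here; the statement is a direct corollary of \cref{prop:universalorbits}, packaged via the two-sided inclusion of $\G(\F,\Facute)$ between $\U(\F)$ and $\U(\Facute)$. The only bookkeeping point worth making explicit in the written proof is the equality of orbits of $F_i$ and $\acute F_i$, which legitimizes the otherwise slightly ambiguous use of the word \emph{harmonious} in the statement.
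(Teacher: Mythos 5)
Your proof is correct and follows essentially the same route as the paper: both arguments rest on the equality of $F_i$- and $\acute F_i$-orbits (hence of the two notions of harmoniousness), the sandwich $\U(\F)\leq\G(\F,\Facute)\leq\U(\Facute)$, and \cref{prop:universalorbits} applied on each side. Your write-up just spells out the two implications a bit more explicitly than the paper does.
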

\begin{proof}
	For each $i\in I$, the orbits of the local groups $F_i$ and $\acute F_i$ agree. By \cref{prop:universalorbits}, the same is true for the orbits of $\U(\F)$ and $\U(\Facute)$ on $\Delta$. Since $\U(\F)\leq\G(\F,\Facute)\leq\U(\Facute)$, the result now follows from the same \cref{prop:universalorbits}.
\end{proof}

The next lemma yields in particular a converse to \cref{lem:rungsnotsingular}: every panel that is \emph{not} the rung of a ladder, is a singularity of some element of $\G(\F,\Facute)$.
\begin{lemma}
	\label{lem:restrictedlocallemma}
	Let $f$ be a permutation in $\acute F_i$ and let $\P$ be an $i$-panel. Then there exists an automorphism $g\in\U(\Facute)$ with the following properties:
	\begin{enumerate}[label={\rm (\alph*)}]
		\item\label{lem:restrictedlocallemma:a} $g$ stabilizes $\P$;
		\item\label{lem:restrictedlocallemma:b} the local action is equal to $f$ at every panel parallel to $\P$;
		\item\label{lem:restrictedlocallemma:c} the local action is a permutation in $F_i$ at every other panel.
	\end{enumerate}
	If $\P$ is not the rung of a ladder in $\Delta$, then $g\in\G(\F,\Facute)$.
\end{lemma}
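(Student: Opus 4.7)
The plan is to apply the extension result \cref{prop:evenmoretechnicalextensionstuff} with $C$ chosen as the residue of type $J := \{i\} \cup i^\perp$ containing $\P$. Denoting this residue by $\R$, it is panel-closed (being a residue), and since every type in $i^\perp$ commutes with $i$, we have a direct product decomposition $\R \cong \P \times \R_{i^\perp}$, where $\R_{i^\perp}$ is an $i^\perp$-residue. By \itemref{prop:parallelinrab}{2}, the $i$-panels parallel to $\P$ are precisely the $i$-panels contained in $\R$, parametrized as $\P \times \{c'\}$ for $c' \in \R_{i^\perp}$.

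First I would construct the automorphism $g_0$ of $\R$ acting as $f$ on the first factor (via the coloring bijection $\lambda_i|_\P \colon \P \to \Omega_i$) and as the identity on the second. By design, $g_0$ stabilizes $\P$, acts as $f \in \acute F_i$ on every $i$-panel of $\R$, and trivially on every $j$-panel of $\R$ for $j \in i^\perp$; hence $g_0 \in \U_\R(\restrict{\Facute}{J})$, and \cref{prop:ext-res} extends it to some $g_0' \in \U_\Delta(\Facute)$.

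Next I would apply \cref{prop:evenmoretechnicalextensionstuff} to $g_0'$ with $C = \R$, after verifying that $g_0'$ sends chambers of $\R$ to harmonious chambers (with respect to $\F$): for $k = i$, $\lambda_i$ changes by $f \in \acute F_i \leq \widehat F_i$, which preserves $F_i$-orbits by definition of the Young overgroup; for $k \in i^\perp$, the color $\lambda_k$ is fixed by $g_0'|_\R = g_0$; for $k \notin J$, the color $\lambda_k$ is constant on $\R$ (since $k$-colors are preserved along every adjacency of type in $J$) while $g_0'$ preserves $\R$ setwise. This produces $h \in \Aut(\Delta)$ with $h|_\R = g_0|_\R$ such that every panel $\P''$ of type $k$ is either parallel to a panel in $\R$, or satisfies $\sigma_\lambda(h,\P'') \in F_k$.

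Setting $g := h$, properties (a) and (b) are immediate from $h|_\R = g_0|_\R$. For (c), an $i$-panel outside $\R$ cannot be parallel to any panel in $\R$ (parallel panels share type, and the $i$-parallelism class of $\P$ is contained in $\R$), so $\sigma_\lambda(h,\cdot) \in F_i$ there; for $k$-panels with $k \neq i$, the local action lies in $F_k$ either by the same conclusion or, for $k \in i^\perp$, by \cref{prop:parallellocals} applied to the trivial local action on a parallel $k$-panel in $\R$. Finally, $\P$ is a rung of a ladder if and only if $i^\perp$ contains two non-commuting generators, if and only if $\R_{i^\perp}$ is infinite, if and only if the parallel class of $\P$ is infinite; hence when $\P$ is not a rung, $h$ has only finitely many singularities and therefore lies in $\G(\F,\Facute)$. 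The main bookkeeping obstacle will be the harmoniousness verification and the type-by-type tracking of local actions on panels outside $\R$ that may be parallel to $k$-panels of $\R$ for $k \in i^\perp$.
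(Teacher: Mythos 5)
Your proof is correct and follows essentially the same strategy as the paper's: realize the permutation as an automorphism via \cref{prop:ext-res} and then apply \cref{prop:evenmoretechnicalextensionstuff}, with \cref{prop:parallellocals} and \cref{prop:unboundedrungs} (implicitly, through your equivalences on $i^\perp$) handling the parallel class and the rung condition. The only divergence is your choice $C = \R$ of the full $\{i\}\cup i^\perp$-residue, whereas the paper simply takes $C = \P$ itself (a panel is panel-closed), which makes properties (a)--(c) immediate from the proposition's conclusion and spares the extra bookkeeping you do for panels parallel to the $i^\perp$-type panels of $\R$.
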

\begin{proof}
    The permutation $f \in \acute F_i$ corresponds to a permutation $\tilde f \in \U_\P(\acute F_i)$.
	By \cref{prop:ext-res}, $\tilde f$ extends to an automorphism $h \in \U_\Delta(\Facute)$.
	We can now apply \cref{prop:evenmoretechnicalextensionstuff} with $C = \P$ as the convex panel-closed set to find the required $g \in \Aut(\Delta)$ satisfying all three conditions.
	(Recall that by \cref{prop:parallellocals}, condition~\ref{lem:restrictedlocallemma:b} is automatically satisfied.)
	
	In particular, $g \in \U(\Facute)$.
	Moreover, if $\P$ is not the rung of a ladder in $\Delta$, then by \cref{prop:unboundedrungs}, the set of panels parallel to $\P$ is bounded and hence finite (because $\Delta$ is locally finite); it then follows that $g\in\G(\F,\Facute)$.
\end{proof}

%
%

\begin{definition}
	\label{def:KnP}
	Let $\P$ be a panel of $\Delta$.
	\begin{enumerate}
        \item
            We define the subgroup
        	\[K_\P = \bigl\{g\in\G(\F,\Facute) \bigm| \text{$g$ stabilizes $\P$ and every singularity of $g$ is parallel to $\P$} \bigr\}.\]
        	Notice that if $\P$ is a rung in $\Delta$, then $K_P$ coincides with the panel stabilizer $\U(\F)_{\{\P\}}$.
        \item
            For each integer $n\geq 0$, we define the subgroup
        	\[K_{n,\P} = \bigl\{g\in\G(\F,\Facute) \bigm| \text{$g$ stabilizes $\P$ and $\dist(\P,\P')\leq n$ for every singularity $\P'$} \bigr\}.\]
	\end{enumerate}
	Note that both are indeed subgroups by \cref{lem:localcomposition}.
	(We do not include the local data $\F$ and $\Facute$ in the notation and hope that this will not cause any confusion.)
\end{definition}
\begin{lemma}
	\label{lem:KPfromlocal}
	Let $\P$ be an $i$-panel that is not a rung in $\Delta$.
	\begin{enumerate}
	    \item\label{lem:KPfromlocal:1}
        	Let $f\in \acute F_i$ be a permutation. Then there exists an automorphism $g\in K_\P$ such that $\sigma_\lambda(g,\P)=f$.
        \item\label{lem:KPfromlocal:2}
        	$\subgroupindex{K_\P}{\U(\F)_{\{\P\}}} = \subgroupindex{\acute F_i}{F_i}$.
	\end{enumerate}
\end{lemma}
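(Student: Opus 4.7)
The plan is to handle part (i) by invoking \cref{lem:restrictedlocallemma} essentially directly, and then to derive part (ii) by constructing a natural surjective homomorphism $K_\P\to\acute F_i$ whose preimage of $F_i$ is exactly $\U(\F)_{\{\P\}}$.

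For part (i), I would apply \cref{lem:restrictedlocallemma} to the given $f\in\acute F_i$ and the panel $\P$. Since $\P$ is not the rung of a ladder, the lemma directly produces an automorphism $g\in\G(\F,\Facute)$ that stabilizes $\P$, realizes $\sigma_\lambda(g,\P)=f$, has local action $f$ at every panel parallel to $\P$ (automatic by \cref{prop:parallellocals}), and local action in $F_i$ everywhere else. In particular, every singularity of $g$ must be a panel parallel to~$\P$, so $g\in K_\P$.

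For part (ii), define $\phi\colon K_\P \to \acute F_i\colon g\mapsto \sigma_\lambda(g,\P)$. By \cref{lem:localcomposition}, $\phi$ is a group homomorphism (the $\P$-stabilization kills the ``$h\P$'' subtlety), and by part (i) it is surjective. The key step is to identify $\phi^{-1}(F_i) = \U(\F)_{\{\P\}}$. One inclusion is clear: any $g\in\U(\F)_{\{\P\}}$ has no singularities at all, so trivially belongs to $K_\P$, and its local action at $\P$ lies in $F_i$. For the other inclusion, suppose $g\in K_\P$ with $\sigma_\lambda(g,\P)\in F_i$; by \cref{prop:parallellocals}, the local action of $g$ at every panel parallel to $\P$ is also in $F_i$, but by the definition of $K_\P$ all potential singularities of $g$ are parallel to $\P$, so $g$ has no singularities and thus $g\in\U(\F)\cap\Stab(\P)=\U(\F)_{\{\P\}}$.

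Combining these gives the standard index identity
\[ \subgroupindex{K_\P}{\U(\F)_{\{\P\}}} = \subgroupindex{\phi(K_\P)}{\phi(\U(\F)_{\{\P\}})} = \subgroupindex{\acute F_i}{F_i}, \]
where the second equality uses $\phi(\U(\F)_{\{\P\}})=F_i$ (a consequence of the universal property for $\U(\F)$ restricted to the panel $\P$, via \cref{prop:ext-res}). The only delicate point is verifying the preimage identification; everything else is bookkeeping with the definitions and with the fact that parallelism forces equal local actions. I expect no real obstacle here because we have specifically assumed $\P$ is not a rung, which is precisely what makes \cref{lem:restrictedlocallemma} available.
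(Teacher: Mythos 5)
Your proof is correct and follows essentially the same route as the paper: part (i) is obtained from the extension machinery (the paper cites \cref{prop:evenmoretechnicalextensionstuff} directly, of which \cref{lem:restrictedlocallemma} is the immediate corollary you invoke), and part (ii) is exactly the intended consequence of \cref{prop:parallellocals}, which you merely spell out in more detail via the local-action homomorphism $g\mapsto\sigma_\lambda(g,\P)$ and the identification of $\U(\F)_{\{\P\}}$ as the preimage of $F_i$.
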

\begin{proof}
    \Cref{lem:KPfromlocal:1} is a special case of \cref{prop:evenmoretechnicalextensionstuff}.
    By \cref{prop:parallellocals}, \ref{lem:KPfromlocal:2} now follows from \ref{lem:KPfromlocal:1}.
\end{proof}

\begin{proposition}
	\label{prop:restrictedcompgen}
	Let $\H$ be a set of representatives of harmonious panels of $\Delta$ that are not rungs. Then
	\[\G(\F,\Facute) = \bigl\langle \U(\F), K_\P \mid \P\in\H\bigr\rangle.\]
\end{proposition}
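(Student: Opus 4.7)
The inclusion $\supseteq$ is immediate, since every $K_\P$ and the group $\U(\F)$ lies inside $\G(\F,\Facute)$. The real work is the reverse inclusion; let $G_0 = \langle \U(\F), K_\P \mid \P \in \H \rangle$.

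My first step is to extend the generating set: I will show that $K_{\P'} \leq G_0$ for \emph{every} non-rung panel $\P'$ of $\Delta$, not only those in $\H$. Given such $\P'$, let $\P \in \H$ be the unique representative harmonious to $\P'$. By \cref{prop:restrictedorbits} (combined with \cref{prop:universalorbits}, noting that $\U(\F)$, $\G(\F,\Facute)$ and $\U(\Facute)$ share the same orbits on panels since $F_i$ and $\acute F_i$ have identical orbits), there exists $u \in \U(\F)$ with $u \P = \P'$. A short computation with \cref{lem:localcomposition} and \cref{prop:parallellocals} shows $u K_\P u^{-1} = K_{\P'}$, so $K_{\P'} \subseteq G_0$.

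The second and main step is a reduction-of-singularities argument. For $g \in \G(\F,\Facute)$, let $n(g)$ denote the number of parallelism classes of panels in $S(g)$; by \cref{prop:parallellocals} each class consists entirely of singularities with the same local action, and by \cref{lem:rungsnotsingular} together with \cref{prop:unboundedrungs} and local finiteness of $\Delta$, each class is finite, hence $n(g)$ is a well-defined non-negative integer. I induct on $n(g)$. If $n(g) = 0$ there are no singularities and $g \in \U(\F) \subseteq G_0$. Otherwise, pick a singularity $\P$ of $g$ of type $i$, set $f = \sigma_\lambda(g,\P) \in \acute F_i \setminus F_i$, and note that $\P$ is not a rung; then \cref{lem:KPfromlocal} supplies $h \in K_\P$ with $\sigma_\lambda(h,\P) = f^{-1}$.

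The heart of the inductive step, and the only place requiring care, is the verification that $n(gh) = n(g) - 1$. For any $\P'' \parallel \P$, applying \cref{lem:localcomposition} and using \cref{prop:parallellocals} twice (once for $g$ between $\P''$ and $h\P''$, which is again parallel to $\P$ since $h$ is an automorphism fixing $\P$, and once for $h$ between $\P''$ and $\P$) gives $\sigma_\lambda(gh, \P'') = f \cdot f^{-1} = \id \in F_i$; hence the whole class $[\P]$ is eliminated. For any $\P'' \not\parallel \P$, the local action $\sigma_\lambda(h,\P'')$ lies in $F_{\mathrm{type}(\P'')}$, so $\P''$ is a singularity of $gh$ if and only if $h\P''$ is a singularity of $g$; this sets up a bijection (via $h^{-1}$) between singularities of $gh$ outside $[\P]$ and singularities of $g$ outside $[\P]$, which preserves parallelism classes because $h^{-1}$ is an automorphism. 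Thus $n(gh) = n(g) - 1$, the induction applies, and $g = (gh) \cdot h^{-1} \in G_0$ because $h \in K_\P \subseteq G_0$ by the first step.
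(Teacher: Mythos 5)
Your proof is correct and takes essentially the same approach as the paper: both arguments proceed by induction on the number of singularities (you count parallelism classes, the paper counts panels, which amounts to the same thing since a corrector from $K_\P$ wipes out an entire parallelism class at once), and both realize the required corrector as a conjugate of an element of $K_{\P_0}$ with $\P_0\in\H$ by a suitable element of $\U(\F)$. Your Step~1, establishing $K_{\P'}\leq G_0$ for every non-rung panel $\P'$, is exactly the role played by $h'=h^{-1}h_0h$ in the paper's proof; isolating it as a separate step is a minor but pleasant reorganization.
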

\begin{proof}
	Let $g\in\G(\F,\Facute)$. We proceed by induction on the number $|S(g)|$ of singularities. Of course, if $S(g)=\emptyset$ then $g\in\U(\F)$ and there is nothing to show, so assume that $\lvert S(g) \rvert \geq 1$ and let $\P$ be a singularity of type $i$. By \cref{lem:rungsnotsingular}, $\P$ is not a rung in $\Delta$.
	
	By definition of $\H$, there is an $i$-panel $\P_0\in\H$ such that $\P$ and $\P_0$ are harmonious. By \cref{prop:universalorbits,prop:restrictedorbits}, there is thus an automorphism $h\in\U(\F)$ such that $h\acts\P=\P_0$. By~\cref{prop:ext-res} applied on the residue $\P$, we can assume that the local action of $h$ at $\P$ is the identity. Next, let $\sigma = \sigma_\lambda(g,\P)$. Using \cref{lem:KPfromlocal} we can find an automorphism $h_0\in K_{\P_0}$ satisfying $\sigma_\lambda(h_0,\P_0)=\sigma^{-1}$.
	Finally, let $h' := h^{-1}\cdot h_0\cdot h \in K_\P$.
	\[\begin{tikzcd}[dims={5em}{4em}]
		\P \ar[rrr,"h'"] \ar[dr,"h"] \ar[dd,"\lambda_i",swap]
			&&& \P \ar[dd,"\lambda_i"] \ar[r,"g"]
			& g\acts\P \ar[dd,"\lambda_i"]\\
		& \P_0 \ar[dl,"\lambda_i",swap] \ar[r,"h_0"]
			& \P_0 \ar[dr,"\lambda_i"] \ar[ru,"h^{-1}"]\\
		\Omega_i \ar[rrr,"\sigma^{-1}"]
			&&& \Omega_i \ar[r,"\sigma"]
			& \Omega_i
	\end{tikzcd}\]
	Consider the automorphism $g' = g\cdot h'$. By construction (see diagram), the local action of $g'$ at~$\P$ is the identity. In particular, $g'$ does not have singularities parallel to~$\P$.
	
	Now let $\P'$ be any other panel that is not parallel to $\P$; then $h'\acts\P'$ is not parallel to $\P$ either. Since
	\[\sigma_\lambda(g',\P')
		= \sigma_\lambda(g,h'\acts\P') \circ \sigma_\lambda(h',\P'),\]
	where the second factor is known to be a permutation in $F_i$, we find that $\P'$ is a singularity of $g'$ if and only if $h'\acts\P'$ is a singularity of $g$.
	
	In other words, the number of singularities \emph{not parallel to $\P$} is identical for $g$~and~$g'$; however, by the previous paragraph, $g$ has singularities parallel to $\P$ whereas $g'$ has none.
	This implies that $g'$ has strictly less singularities than $g$. The induction hypothesis finishes the proof.
\end{proof}

\begin{remark}\label{rem:Hfinite}
    The set $\H$ in \cref{prop:restrictedcompgen} is finite. Indeed, denote the number of orbits of each local group $F_i$ by $m_i=\lvert\Omega_i/F_i\rvert$. Then by \cref{prop:restrictedorbits}, the action of $\G(\F,\Facute)$ on the $i$-panels has $\prod_{j\neq i} m_j$ orbits. Summing over $i$,
    	\[ \lvert\H\rvert
    		= \sum_{\substack{i\in I \\ \textrm{not of rung-type}}} \prod_{j\neq i} m_j .
    	\]
\end{remark}

\section{Topological properties}\label{sec:topological}

The topology on $\G(\F,\Facute)$ is somewhat delicate, because this subgroup of $\Aut(\Delta)$ is almost never closed; see \cref{cor:restrictedboring} below. In particular, the induced topology on $\G(\F,\Facute)$ cannot be locally compact in this case (see, e.g., \cite[Chapter II, Theorem 5.11]{HR}).

In order to make $\G(\F,\Facute)$ into a tdlc group, we will have to ensure that the embedding $\U(\F)\hookrightarrow\G(\F,\Facute)$ be a continuous open map. In fact, there will be a unique way to do this. An elegant way to formulate this is provided by the following lemma from \cite{neretingroups}.
\begin{lemma}\label{lem:open-cont topology}
    Let $G$ be an abstract group containing a topological group $H$ as a subgroup.
    Then $G$ admits a unique group topology such that the inclusion $H \hookrightarrow G$ is continuous and open, provided that
    \begin{equation}\label{eq:open}
        gUg' \cap H \text{ is open in } H \tag{$*$}
    \end{equation}
    for all open subsets $U \subseteq H$ and for all $g,g' \in G$.
    
    Moreover, if $G$ is itself contained in a larger topological group $A$ in which $H$ is a closed subgroup (with the subgroup topology inherited from $A$), then this condition~\eqref{eq:open} is automatically satisfied.
\end{lemma}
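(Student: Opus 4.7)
The plan is to prove the statement in three stages.

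For uniqueness, I observe that if $G$ carries a group topology in which $H \hookrightarrow G$ is continuous and open, then $H$ is an open subgroup of $G$, and the neighborhood filter at $e \in G$ must coincide with the neighborhood filter at $e \in H$. Because any group topology is completely determined by its identity filter, uniqueness follows at once.

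For existence, I would declare a set $\mathcal{O} \subseteq G$ to be open precisely when for every $g \in \mathcal{O}$ there is an open identity neighborhood $U$ of $H$ with $gU \subseteq \mathcal{O}$. Equivalently, the open identity neighborhoods of $H$ serve as a neighborhood basis at $e \in G$, then translated left throughout $G$. Continuity of translations is built into the definition, while continuity of inversion and of multiplication at $(e,e)$ are inherited directly from $H$ being a topological group. The one nontrivial axiom is continuity of the conjugation maps $x \mapsto gxg^{-1}$ at $x = e$: given an open identity neighborhood $U$ of $H$ and $g \in G$, I need $gUg^{-1}$ to be an identity neighborhood in $G$, which amounts to $gUg^{-1} \cap H$ being an identity neighborhood in $H$. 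This is exactly what condition~\eqref{eq:open} supplies, applied with $g' = g^{-1}$: it guarantees that $gUg^{-1} \cap H$ is open in $H$, and $e \in U$ forces $e \in gUg^{-1} \cap H$. Standard arguments then upgrade these verifications to a bona fide topological group structure on $G$ in which $H$ embeds as an open subgroup whose subspace topology coincides with its original topology, so the inclusion is both continuous and open.

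For the moreover clause, I would exploit that the ambient topological group $A$ acts on itself by continuous translations. Given $U \subseteq H$ open and $g, g' \in G \subseteq A$, write $U = V \cap H$ with $V \subseteq A$ open; then the map $x \mapsto gxg'$ is a homeomorphism of $A$, so $gVg'$ is open in $A$ and hence $gVg' \cap H$ is open in $H$. The remaining and technically most delicate step is to identify $gUg' \cap H$ with this open set, which is where the closedness of $H$ in $A$ (and the assumption that $H$ carries the subspace topology from $A$) enters the picture. I expect this final identification to be the main obstacle, to be handled by a careful chase through the homeomorphism $x \mapsto gxg'$ and the relationship between $U$ and its extension $V$.
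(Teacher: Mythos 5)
Your treatment of the main statement is fine in outline, and it differs from the paper only in that the paper simply cites an external result (Lemma 8.4 of the reference on Neretin groups) for this part: uniqueness follows, as you say, because a continuous open inclusion forces $H$ to be open in $G$ with the same identity neighbourhood filter, and for existence the only neighbourhood-filter axiom not inherited from $H$ is compatibility with conjugation by elements of $G$, which is exactly what condition $(*)$ with $g'=g^{-1}$ supplies; the remaining verifications are indeed standard.

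The ``moreover'' clause, however, is where your proposal has a genuine gap, and you have put your finger on precisely the step that cannot be carried out. Writing $U=V\cap H$ with $V$ open in $A$, you correctly obtain that $gVg'\cap H$ is open in $H$; but $gUg'\cap H$ is in general strictly smaller, because an element $gwg'\in H$ with $w\in V$ need not satisfy $w\in H$, and closedness of $H$ gives no control over this. No careful chase can repair the identification: take $A=\GL_2(\mathbb{R})$, let $H$ be the closed subgroup of invertible upper triangular matrices, $G=A$, let $g=g'^{-1}$ be the permutation matrix interchanging the two basis vectors, and let $U$ be a small identity neighbourhood of $H$. Then $gUg'\cap H$ consists of the near-identity \emph{diagonal} matrices, which is not open in $H$, whereas $gVg'\cap H$ is a genuine identity neighbourhood of $H$; so the two sets you hope to identify really differ, and in fact condition $(*)$ itself can fail for a closed subgroup. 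Note that the paper's own one-line justification of this clause asserts that $U$, being open in $H$, is open in $A$ --- an assertion that uses openness rather than closedness of $H$ in $A$ --- so the obstacle you flag is real and cannot be resolved by a formal argument from closedness alone; in the intended application ($H=\U(\F)$ inside $A=\Aut(\Delta)$, $G$ a restricted universal group) one has to verify $(*)$ directly, using that conjugation by an element with only finitely many singularities carries a sufficiently small open subgroup of $\U(\F)$ back into $\U(\F)$.
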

\begin{proof}
    For the first statement, see \cite[Lemma 8.4]{neretingroups}.
    The second statement is now clear: if $U \subseteq H$ is open in $H$, then it is also open in $A$ and hence also $gUg'$ is open in $A$, so that $gUg' \cap H$ is indeed open in $H$.
\end{proof}


\begin{definition}
	\label{def:leboudectopology}
	For each choice of local data $\F$ and $\Facute$, we endow $\G(\F)$ and $\G(\F,\Facute)$ with the group topology obtained from \cref{lem:open-cont topology} using the embeddings $\U(\F) \hookrightarrow \G(\F) \leq \Aut(\Delta)$ and $\U(\F) \hookrightarrow \G(\F,\Facute) \leq \Aut(\Delta)$.
\end{definition}

\begin{remark}
    \begin{enumerate}
        \item 
            Since $\G(\F,\Facute) = \G(\F) \cap \U(\Facute)$, the topologies on $\G(\F)$ and $\G(\F,\Facute)$ are compatible, in the sense that $\G(\F,\Facute)$ is a closed subgroup of $\G(\F)$ (with the subspace topology).
        \item 
            Note that when $\F=\Facute$, the topology on $\G(\F,\Facute)=\U(\F)$ agrees with the permutation topology, but in general it does not. Indeed, in our topology on $\G(\F,\Facute)$, a subgroup $H\leq\G(\F,\Facute)$ is open if and only if $H$ contains a pointwise stabilizer of a finite set of chambers \emph{intersected with $\U(\F)$}. In other words, the topology on $\G(\F,\Facute)$ in \cref{def:leboudectopology} is finer than the permutation topology.
    \end{enumerate}
\end{remark}

\begin{proposition}
\label{prop:restrictedtdlc}
	Endow $\G(\F,\Facute)$ with the topology from \cref{def:leboudectopology}.
	Then $\G(\F,\Facute)$ is a totally disconnected locally compact (tdlc) group.
\end{proposition}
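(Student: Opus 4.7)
The plan is to invoke \cref{lem:open-cont topology} with $A = \Aut(\Delta)$, $H = \U(\F)$ and $G = \G(\F,\Facute)$, and then derive local compactness and total disconnectedness from the fact that $\U(\F)$ is itself tdlc under the permutation topology (using our standing assumption that $\Delta$ is locally finite).

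First I would verify that $\U(\F)$ is a closed subgroup of $\Aut(\Delta)$ with respect to the permutation topology. If $(g_\alpha)$ is a net in $\U(\F)$ converging to $g\in\Aut(\Delta)$, then for every panel $\P$ the net $(g_\alpha)$ eventually agrees with $g$ on the finite set $\P$, so $\sigma_\lambda(g_\alpha,\P)$ is eventually equal to $\sigma_\lambda(g,\P)$; since $F_i$ is a fixed subgroup of $\Sym(\Omega_i)$, the limit local action lies in $F_i$, hence $g\in\U(\F)$. The second half of \cref{lem:open-cont topology} then guarantees that condition~\eqref{eq:open} is automatically satisfied for $H = \U(\F)$ sitting inside $G = \G(\F,\Facute) \leq A = \Aut(\Delta)$, so the topology of \cref{def:leboudectopology} is a genuine group topology on $\G(\F,\Facute)$ in which the inclusion $\U(\F)\hookrightarrow\G(\F,\Facute)$ is both continuous and open.

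Next, local compactness comes essentially for free. Because $\Delta$ is locally finite, each pointwise stabilizer in $\Aut(\Delta)$ of a finite set of chambers is a compact open subgroup; intersecting with $\U(\F)$ therefore yields a basis of compact open subgroups of $\U(\F)$ around the identity. Pick any such compact open subgroup $K\leq\U(\F)$. Since $\U(\F)\hookrightarrow\G(\F,\Facute)$ is continuous, the image of $K$ is compact in $\G(\F,\Facute)$, and since the inclusion is also open, $K$ is an open neighborhood of the identity in $\G(\F,\Facute)$. Hence $\G(\F,\Facute)$ admits a compact neighborhood of $e$, which is all that is required for local compactness.

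For total disconnectedness I would argue similarly: the collection of compact open \emph{subgroups} of $\U(\F)$ inherited from pointwise stabilizers of finite sets of chambers is a neighborhood basis of the identity in $\U(\F)$, and by openness of the inclusion this is also a neighborhood basis of the identity in $\G(\F,\Facute)$. Having a neighborhood basis of the identity consisting of open subgroups in a Hausdorff topological group forces the connected component of the identity to be trivial (any such neighborhood is clopen), and by translation every connected component is a singleton. The group $\G(\F,\Facute)$ is also Hausdorff because the finer topology lies above the permutation topology, which is already Hausdorff. Combining these three observations yields that $\G(\F,\Facute)$ is tdlc. I do not expect any serious obstacle: the only step that requires a small verification beyond directly quoting \cref{lem:open-cont topology} is the closedness of $\U(\F)$ in $\Aut(\Delta)$, which is a quick consequence of the definition of the permutation topology.
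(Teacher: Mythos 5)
Your proof is correct and follows essentially the same route as the paper, which simply observes that $\U(\F)$ is an open tdlc subgroup of $\G(\F,\Facute)$ in the topology of \cref{def:leboudectopology} and that this forces the whole group to be tdlc; your extra verifications (closedness of $\U(\F)$ in $\Aut(\Delta)$, hence condition~\eqref{eq:open}, and the transfer of compact open subgroups) are exactly the details the paper leaves implicit in \cref{def:leboudectopology} and its surrounding remarks.
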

\begin{proof}
	This follows immediately from the fact that $\U(\F)$ is an open tdlc subgroup of~$\G(\F,\Facute)$ for this topology.
\end{proof}

Using our previous knowledge of $\U(\F)$, we have the following immediate corollary.
\begin{corollary}
	\label{prop:restricteddiscrete}
	$\G(\F,\Facute)$ is discrete if and only if every local group in $\F$ is free.
\end{corollary}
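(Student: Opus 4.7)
The plan is to reduce the discreteness question to a classical property of the universal group $\U(\F)$. By \cref{def:leboudectopology}, the topology on $\G(\F,\Facute)$ is designed so that $\U(\F)$ is an open subgroup. Since a topological group is discrete if and only if some (equivalently, every) open subgroup of it is discrete, I will first reduce to showing that $\U(\F)$, endowed with its permutation topology, is discrete precisely when every $F_i$ acts freely on $\Omega_i$. Discreteness of $\U(\F)$ in turn is equivalent to the pointwise stabilizer of some finite set of chambers being trivial.

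For the ``if'' direction, I assume each $F_i$ acts freely and verify that the pointwise stabilizer $\U(\F)_{c_0}$ of a single chamber $c_0$ is already trivial. Indeed, if $g \in \U(\F)$ fixes $c_0$ and $\P$ is an $i$-panel through $c_0$, then $\sigma_\lambda(g,\P) \in F_i$ fixes the color $\lambda_i(c_0)$, so by freeness this local action is the identity. Therefore $g$ fixes every chamber of $\P$, and iterating along galleries rooted at $c_0$ forces $g = 1$. Hence $\U(\F)_{c_0} = \{1\}$ is open, making $\U(\F)$, and thus $\G(\F,\Facute)$, discrete.

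For the converse, I suppose some $F_i$ is not free and fix $\sigma \in F_i \setminus \{1\}$ together with a color $x \in \Omega_i$ fixed by $\sigma$. I need to show that for every finite set of chambers $X$, the pointwise stabilizer $\U(\F)_{\Fix(X)}$ is non-trivial. The plan is to choose an $i$-panel $\P$ far from $X$, in the sense that the chamber $c^* \in \P$ with $\lambda_i(c^*) = x$ is the common projection $\proj_\P(c) = c^*$ for every $c \in X$; such a $\P$ exists via the gate property (\cref{lem:convexgateproperty}) together with the presence of sufficiently distant chambers of color $x$ in $\Delta$ (the degenerate finite situations being handled separately via \cref{lem:reducible}). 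I then invoke \cref{lem:restrictedlocallemma} with $f = \sigma \in F_i \subseteq \acute F_i$ to obtain $g \in \U(\Facute)$ that stabilizes $\P$, satisfies $\sigma_\lambda(g,\P) = \sigma$, and has all other local actions in the respective local groups; since $\sigma$ already lies in $F_i$, this $g$ in fact lies in $\U(\F)$ and fixes $c^*$.

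The hard part will be to arrange that the constructed $g$ actually fix $X$ pointwise, rather than merely fixing the single chamber $c^*$. My plan is to strengthen the construction behind \cref{lem:restrictedlocallemma} by tracking the recursive modifications in \cref{prop:evenmoretechnicalextensionstuff} more carefully: each invocation of Caprace's extension \cite[Proposition 4.2]{caprace2014} at a defective panel moves only chambers outside the wing associated with that panel's fixed chamber, so an inductive sphere-by-sphere check confirms that the $c^*$-wing of $\P$ remains pointwise fixed throughout the construction. Because $X$ lies entirely in that wing by the choice of $\P$, the resulting element $g$ is a non-trivial member of $\U(\F)_{\Fix(X)}$, completing the proof that $\U(\F)$, and hence $\G(\F,\Facute)$, fails to be discrete.
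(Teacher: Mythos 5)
Your opening reduction is exactly the paper's: by \cref{def:leboudectopology}, $\U(\F)$ is an open subgroup of $\G(\F,\Facute)$, so the latter is discrete if and only if the former is. At that point the paper simply quotes the known criterion for $\U(\F)$ (\cite[Proposition~3.14]{bossaert}), whereas you attempt to re-derive it. Your ``if'' direction (all $F_i$ free $\Rightarrow$ the stabilizer of a single chamber in $\U(\F)$ is trivial) is correct and complete.

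The ``only if'' direction, however, has a genuine gap at precisely the step you yourself flag as ``the hard part''. You need a nontrivial element of $\U(\F)$ fixing an arbitrary finite set $X$ pointwise, and you propose to get it by ``tracking'' the recursion in \cref{prop:evenmoretechnicalextensionstuff} inside \cref{lem:restrictedlocallemma}. But that recursion starts from the automorphism $h\in\U_\Delta(\Facute)$ produced by \cref{prop:ext-res}, and nothing forces this initial extension to fix any chamber outside $\P$, let alone the wing $X_i(c^*)$ containing $X$; so even if every correction $\widetilde\pi_\Q$ had support off that wing, the resulting $g$ need not fix $X$. To salvage the plan you would have to restart the construction from Caprace's extension of $\pi_\P$ (which does fix all chambers projecting to $c^*$) and then prove that all subsequent corrections can be chosen to move no chamber of $X_i(c^*)$ --- an argument you do not give, and which is not contained in the statement of \cref{prop:evenmoretechnicalextensionstuff} (that proposition only controls local actions and the restriction to $C$, and its bound on moved chambers concerns $\ball_{n+1}(C')$ only). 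A second, smaller, unproved step is the choice of $\P$: the $i$-color of the gate $\proj_\P(X)$ is not freely prescribable, so arranging $\lambda_i(c^*)=x$ requires an actual argument (e.g.\ crossing an adjacency of a type $k$ with $m_{ik}=\infty$ to flip the gate, with the isolated-node case treated via \cref{lem:reducible}); ``sufficiently distant chambers of color $x$'' does not by itself achieve this. The clean fix is to do what the paper does: after the reduction to $\U(\F)$, invoke the known discreteness criterion \cite[Proposition~3.14]{bossaert} rather than re-proving it.
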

\begin{proof}
	By definition of the topology, $\G(\F,\Facute)$ is discrete if and only if $\U(\F)$ is. The result is now an immediate consequence of \cite[Proposition 3.14]{bossaert}.
\end{proof}

In the following proposition, we can assume without loss of generality that the local data $\F$ and $\Facute$ already take into account the rung restriction (\cref{lem:rungsnotsingular}) by demanding that $F_i = \acute F_i$ for every $i\in I$ that is the type of a rung.

\begin{proposition}
	\label{prop:restrictedclosure}
	Let $\F$ and $\Facute$ be the local data over $I$ such that $F_i = \acute F_i$ for every $i\in I$ that is the type of a rung in $\Delta$.
	Then the closure of $\G(\F,\Facute)$ in $\Aut(\Delta)$ is $\U(\Facute)$.
\end{proposition}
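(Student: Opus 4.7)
The inclusion $\overline{\G(\F,\Facute)} \subseteq \U(\Facute)$ is immediate: $\U(\Facute)$ is a closed subgroup of $\Aut(\Delta)$ in the permutation topology, being cut out by the closed conditions ``$\sigma_\lambda(g,\P)\in \acute F_i$'' at every panel, and it contains $\G(\F,\Facute)$. The substance is the reverse inclusion. By definition of the permutation topology, it suffices to prove the following density statement: for every $g \in \U(\Facute)$ and every finite $X \subseteq \Delta$, there exists $h \in \G(\F,\Facute)$ with $\restrict{h}{X} = \restrict{g}{X}$.

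The plan is to invoke \cref{prop:evenmoretechnicalextensionstuff} with the restrictive local data $\F$. First, enlarge $X$ to a finite panel-closed subset $C \subseteq \Delta$, obtained by iteratively alternating ``take convex hull'' with ``close off every partially-included panel''; local finiteness of $\Delta$ together with the fact that each pass only enlarges the diameter by a bounded amount ensures the procedure terminates in a finite $C$. Since the containment $F_i \leq \acute F_i \leq \widehat F_i$ forces $F_i$ and $\acute F_i$ to have identical orbits, an element $g \in \U(\Facute)$ automatically sends chambers of $C$ to chambers harmonious with respect to $\F$ (in the sense of \cref{def:harmonious}). Hence the hypothesis of \cref{prop:evenmoretechnicalextensionstuff} is met, and we obtain $h \in \Aut(\Delta)$ with $\restrict{h}{C} = \restrict{g}{C}$ such that for every $i$-panel $\P$, either $\P$ is parallel to a panel $\P_0 \subseteq C$, or $\sigma_\lambda(h,\P) \in F_i$.

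It remains to verify $h \in \G(\F,\Facute)$. For $\U(\Facute)$-membership: in the ``parallel'' case, \cref{prop:parallellocals} gives $\sigma_\lambda(h,\P) = \sigma_\lambda(h,\P_0) = \sigma_\lambda(g,\P_0) \in \acute F_i$, while in the other case $\sigma_\lambda(h,\P) \in F_i \leq \acute F_i$. For finiteness of $S(h)$: every singularity must be parallel to some $\P_0 \subseteq C$. If such a $\P_0$ is a rung, then the standing assumption $F_i = \acute F_i$ combined with parallelism forces $\sigma_\lambda(h,\P) = \sigma_\lambda(g,\P_0) \in \acute F_i = F_i$, so no rung in $C$ contributes any singularities; if $\P_0$ is not a rung, then by \cref{prop:unboundedrungs} its parallelism class is bounded, hence finite by local finiteness. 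Summing the contributions over the finitely many panels inside $C$ yields $|S(h)|<\infty$, so $h \in \G(\F)\cap\U(\Facute) = \G(\F,\Facute)$, and by construction $\restrict{h}{X}=\restrict{g}{X}$, as required.

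The main technical content is packaged inside \cref{prop:evenmoretechnicalextensionstuff}; once that is granted, the remaining work is the parallelism bookkeeping of the last step, which is precisely where the hypothesis ``$F_i = \acute F_i$ for every rung-type $i$'' earns its keep—without it, a rung panel in $C$ on which $g$ acts outside of $F_i$ would infect its entire (infinite) parallelism class with singularities. I expect the most delicate drafting to be the finite-panel-closed-hull step, which is combinatorially routine in the locally finite setting but deserves a careful sentence or two.
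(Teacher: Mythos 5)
Your proof follows essentially the same route as the paper: reduce density to finding an element of $\G(\F,\Facute)$ agreeing with a given $g\in\U(\Facute)$ on a finite panel-closed set $C$ containing the prescribed finite set, apply \cref{prop:evenmoretechnicalextensionstuff} with the restrictive data $\F$, and then use the rung hypothesis together with \cref{prop:unboundedrungs} and local finiteness to conclude that the resulting automorphism has only finitely many singularities; the paper does exactly this, taking $C$ to be the convex closure of the ball $\ball_n(c)$ and letting $n\to\infty$. Your verification of membership in $\G(\F,\Facute)$ --- including the remark that $F_i$ and $\acute F_i$ have the same orbits, so the harmoniousness hypothesis of \cref{prop:evenmoretechnicalextensionstuff} is met --- is correct and in fact more explicit than the paper's. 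The one soft spot is your justification that the alternating ``convex hull / panel closure'' procedure terminates in a \emph{finite} set: a bounded increase of the diameter per pass does not by itself exclude infinitely many passes. The conclusion is nevertheless true (for instance, wings are convex and panel-closed, so the hull is contained in the intersection of all wings containing $X$, whose diameter is bounded by the number of tree-walls separating chambers of $X$, hence finite in a locally finite building), and the paper is equally brisk at this very point, simply asserting that the convex closure of $\ball_n(c)$ is panel-closed (and tacitly finite); so I would not count this as a genuine gap, but the finiteness of the panel-closed hull deserves a real argument rather than the diameter heuristic you gave.
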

\begin{proof}
	Let $g\in\U(\Facute)$. It suffices to find a sequence of automorphisms $g_n\in\G(\F,\Facute)$, where $n\in\mathbb N$, converging to $g$ in the permutation topology on $\Aut(\Delta)$. Let $c\in\Delta$ be any chamber and, for every $n\in\mathbb N$, define the set $B_n$ to be the convex closure of the ball $\ball_n(c)$. Note that $B_n$ is panel-closed.
	
	\Cref{prop:evenmoretechnicalextensionstuff} yields an automorphism $g_n$ that agrees on $B_n$ with $g$. Moreover, the only panels where the local action of $g_n$ is not guaranteed to be a permutation in $\F$ are the panels parallel to some panel in $B_n$. By our assumption on the rungs, however, the parallel classes of singularities are bounded (\cref{prop:unboundedrungs}), and hence finite. Hence we have $g_n\in\G(\F,\Facute)$, and this finishes our proof.
\end{proof}

\begin{corollary}
	\label{cor:restrictedboring}
	Let $\F$ and $\Facute$ be the local data over $I$ such that $F_i = \acute F_i$ for every $i\in I$ that is the type of a rung in $\Delta$.
	Then the following are equivalent:
	\begin{enumerate}
		\item $\F=\Facute$; 
		\item $\G(\F,\Facute)=\U(\F)$;
		\item $\G(\F,\Facute)$ is closed in $\Aut(\Delta)$;
		\item chamber stabilizers of $\G(\F,\Facute)$ are compact;
		\item chamber stabilizers of $\G(\F,\Facute)$ are closed in $\Aut(\Delta)$.
	\end{enumerate}
\end{corollary}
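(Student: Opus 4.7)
The plan is to establish the cycle $(i) \Rightarrow (ii) \Rightarrow (iii) \Rightarrow (iv) \Rightarrow (v) \Rightarrow (i)$.

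The first four links are all quick. $(i) \Rightarrow (ii)$ is the identity $\G(\F,\F) = \U(\F)$ recorded right after \cref{def:restricted2}. $(ii) \Rightarrow (iii)$ holds because universal groups are always closed in $\Aut(\Delta)$ in the permutation topology. $(iii) \Rightarrow (iv)$ uses that $\Delta$ is locally finite, so $\Aut(\Delta)_c$ is compact; a closed subgroup intersected with a compact set is compact. $(iv) \Rightarrow (v)$ is the fact that compact subsets of Hausdorff spaces are closed.

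The substantial content is $(v) \Rightarrow (i)$. The first step is to deduce $\G(\F,\Facute) = \U(\Facute)$. By \cref{prop:restrictedclosure}, $\G(\F,\Facute)$ is dense in $\U(\Facute)$ for the permutation topology; given $g \in \U(\Facute)_c$ and $g_n \in \G(\F,\Facute)$ with $g_n \to g$, we have $g_n(c) \to g(c) = c$, and since $\Delta$ is discrete this forces $g_n(c) = c$ eventually. Hence $\G(\F,\Facute)_c$ is dense in $\U(\Facute)_c$, so hypothesis $(v)$ yields $\G(\F,\Facute)_c = \U(\Facute)_c$. Combined with the fact that $\G(\F,\Facute)$ and $\U(\Facute)$ share the same chamber orbits (by \cref{prop:restrictedorbits} and \cref{prop:universalorbits}, noting that $F_i$ and $\acute F_i$ share orbits by \cref{def:restricted2}), the standard orbit-stabilizer argument gives $\G(\F,\Facute) = \U(\Facute)$.

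Finally, to derive $\F = \Facute$, I would argue by contradiction: suppose $F_i \subsetneq \acute F_i$ for some $i$, necessarily non-rung by hypothesis, and fix $f \in \acute F_i \setminus F_i$ and a base chamber $c_0$. The plan is to construct $g \in \U(\Facute)$ with infinitely many singularities, contradicting $\U(\Facute) = \G(\F,\Facute) \subseteq \G(\F)$. Since $i$ is non-rung, parallelism classes of $i$-panels are finite by \cref{prop:unboundedrungs}, so $\Delta$ contains infinitely many pairwise non-parallel $i$-panels, assuming $\Delta$ is infinite; the finite case is absorbed by \cref{lem:reducible}, which reduces the statement to $\U(\Facute) = \U(\F) \Leftrightarrow \F = \Facute$, immediate by comparing local actions. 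Pick panels $\P_n$ with $\dist(c_0, \P_n) = D_n \to \infty$, arranged so that $c_0$ and all $\P_m$ with $m \neq n$ lie in the wing $W_{p_n} := \{d : \proj_{\P_n}(d) = p_n\}$, where $p_n = \proj_{\P_n}(c_0)$. Using the shared-orbit property of \cref{def:restricted2}, adjust $f$ by an element of $F_i$ to obtain $f_n \in \acute F_i \setminus F_i$ with $f_n(p_n) = p_n$. The rigid extension of $f_n$ at $\P_n$---assembled by combining \cref{prop:ext-res} with \cref{prop:evenmoretechnicalextensionstuff} applied to $C = \P_n$, with the starting automorphism chosen to fix $W_{p_n}$ pointwise---yields $g_n \in \G(\F,\Facute)$ fixing $W_{p_n}$ (in particular the ball $\ball_{D_n - 1}(c_0)$) pointwise and with local action $f_n$ at $\P_n$. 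Since $D_n \to \infty$, the infinite product $g = \prod_n g_n$ converges pointwise in the permutation topology to an element of $\U(\Facute)$ satisfying $\P_n \in S(g)$ for every $n$, giving the required contradiction.

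The main obstacle is the construction in the previous paragraph, specifically producing each $g_n \in \G(\F,\Facute)$ so that it simultaneously fixes an arbitrarily large ball around $c_0$ and has a singularity at $\P_n$. The two hypotheses of the corollary are both essential: the rung assumption produces bounded parallelism classes that allow the panels $\P_n$ to be placed well apart in distinct wings, while the shared-orbit property built into \cref{def:restricted2} makes it possible to adjust $f$ within its $F_i$-coset to install a prescribed fixed point on $\P_n$---without which the rigid extension would fail to fix the wing containing $c_0$.
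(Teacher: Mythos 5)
Your cycle (i)$\Rightarrow$(ii)$\Rightarrow$(iii)$\Rightarrow$(iv)$\Rightarrow$(v) and the first half of your (v)$\Rightarrow$(i) are fine, and the latter is essentially the paper's route in disguise: the paper proves (v)$\Rightarrow$(iii) by the general fact that a permutation group whose point stabilizers are closed is itself closed, and (iii)$\Rightarrow$(i) by observing that a closed $\G(\F,\Facute)$ must coincide with its closure $\U(\Facute)$ (\cref{prop:restrictedclosure}); your density-of-stabilizers argument plus the shared-orbit observation proves the same thing in one go. The paper then simply writes ``hence $\F=\Facute$'', so you correctly identified that the real remaining content is to show that $\G(\F,\Facute)=\U(\Facute)$ forces $\F=\Facute$, i.e.\ to produce an element of $\U(\Facute)$ with infinitely many singularities when $F_i\subsetneq\acute F_i$ for some non-rung type $i$.

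It is exactly there that your construction has a genuine gap, at the production of the elements $g_n$. \cref{prop:evenmoretechnicalextensionstuff} applied with $C=\P_n$ only guarantees that the output agrees with the starting automorphism \emph{on $\P_n$} and that its local actions lie in the groups of $\F$ away from panels parallel to $\P_n$; it gives no control on where chambers outside $\P_n$ are sent, and in particular it does not transfer the property ``fixes $W_{p_n}$ pointwise'' from the input to the output (its proof modifies the input on successive spheres around $C$). Likewise \cref{prop:ext-res} extends $f_n$ to an element of $\U_\Delta(\Facute)$ with no wing-fixing guarantee. So the $g_n$ obtained from these citations need not fix $\ball_{D_n-1}(c_0)$ pointwise, and both the convergence of $\prod_n g_n$ and the identity $\sigma_\lambda(g,\P_n)=f_n$ (which needs every $g_m$ with $m\neq n$ to fix $\P_n$ pointwise) collapse. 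A repair is possible but takes real work: start instead from the rigid extension of \cite[Proposition 4.2]{caprace2014}, which does fix $X_i(p_n)$ pointwise because $f_n$ fixes $\lambda_i(p_n)$, and then apply \cref{prop:evenmoretechnicalextensionstuff} to a larger panel-closed set containing both a large ball around $c_0$ and $\P_n$, after checking that this set stays inside $X_i(p_n)\cup\P_n$. Two further points are asserted rather than proved: the ``star-like'' arrangement of the $\P_n$ (panels marching off to infinity along a geodesic do \emph{not} satisfy $\P_m\subseteq X_i(p_n)$ for $m>n$, so a genuine recursive selection is needed, using for instance that $q_i\geq 3$ whenever $F_i\subsetneq\acute F_i\leq\widehat F_i$); and your dismissal of the finite case via \cref{lem:reducible} does not work, since for finite $\Delta$ one has $\G(\F,\Facute)=\U(\Facute)$ unconditionally, so (v) cannot yield (i) there --- that degenerate case has to be excluded, as the paper implicitly does.
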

\begin{proof}
	Implications (i) $\Rightarrow$ (ii) $\Rightarrow$ (iii) $\Rightarrow$ (iv) $\Rightarrow$ (v) are straightforward. For (iii) $\Rightarrow$ (i), if $\G(\F,\Facute)$ is closed, then $\G(\F,\Facute)=\U(\Facute)$ by \cref{prop:restrictedclosure}, and hence $\F=\Facute$. Finally, (v) $\Rightarrow$ (iii) is a general fact in the permutation topology: if a point stabilizer of a permutation group is closed, then the group itself is closed.
\end{proof}

\begin{proposition}
	\label{lem:KPcompact}
	For each panel $\P$ of $\Delta$ and each integer $n \geq 0$, the subgroups $K_\P$ and $K_{n,\P}$ of $\G(\F,\Facute)$ are compact open.
\end{proposition}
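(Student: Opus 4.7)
The plan is to exhibit, in each case, a compact open subgroup of $\G(\F,\Facute)$ that is contained in the target group with finite index; compactness and openness then follow at once, since a finite union of cosets of a compact open subgroup is itself compact and open.

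For $K_\P$, the natural compact open subgroup is $\U(\F)_{\{\P\}}$. It is contained in $K_\P$ because elements of $\U(\F)$ have empty singularity set, so the defining conditions of \cref{def:KnP} reduce to stabilizing $\P$. It is compact open in $\U(\F)$, being a finite-index extension of the pointwise stabilizer $\U(\F)_{(\P)}$ of the finite set $\P$, and hence compact open in $\G(\F,\Facute)$ as well, since by \cref{def:leboudectopology} the group $\U(\F)$ carries its own permutation topology as an open subgroup of $\G(\F,\Facute)$. The index $[K_\P : \U(\F)_{\{\P\}}]$ is finite: when $\P$ is not a rung this is \itemref{lem:KPfromlocal}{2}, giving index $[\acute F_i : F_i]$; when $\P$ is a rung, every panel parallel to $\P$ is itself a rung of the same type, so by \cref{lem:rungsnotsingular} the condition ``every singularity is parallel to $\P$'' forces elements of $K_\P$ to have no singularities at all, i.e., $K_\P = \U(\F)_{\{\P\}}$.

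For $K_{n,\P}$, the appropriate compact open subgroup is $\U(\F)_{(C)}$, where $C$ is the set of all chambers lying in some panel $\P'$ of $\Delta$ with $\dist(\P,\P') \leq n$. Local finiteness of $\Delta$ guarantees that $C$ is finite, so $\U(\F)_{(C)}$ is compact open in $\U(\F)$ and hence in $\G(\F,\Facute)$. This subgroup is visibly contained in $K_{n,\P}$, since its elements pointwise fix $\P \subseteq C$ and have no singularities. To bound the index, observe that every $g \in K_{n,\P}$ stabilizes $\P$ and therefore permutes the (finite) set of panels within distance $n$ of $\P$, hence preserves $C$ setwise; the resulting homomorphism $K_{n,\P} \to \Sym(C)$ has finite image. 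Its kernel equals $\U(\F)_{(C)}$: any $g \in K_{n,\P}$ fixing $C$ pointwise has identity local action on every panel contained in $C$, and since by hypothesis every singularity of $g$ lies among those panels, $g$ has no singularities at all and therefore lies in $\U(\F)_{(C)}$. Thus $[K_{n,\P} : \U(\F)_{(C)}]$ is finite and $K_{n,\P}$ is compact open.

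The main subtlety is precisely the identification of the kernel of the $K_{n,\P}$-action on $C$, which hinges on two observations used in tandem: local finiteness makes $C$ finite in the first place, and pointwise-fixing a panel automatically makes its local action trivial and hence admissible, thereby eliminating it as a potential singularity. Once these are in place, both claims reduce to the elementary tdlc fact that a finite-index overgroup of a compact open subgroup is itself compact open.
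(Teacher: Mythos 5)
Your proof is correct and follows essentially the same route as the paper: in each case you exhibit a compact open subgroup of $\U(\F)$ (the setwise stabilizer $\U(\F)_{\{\P\}}$ for $K_\P$, a pointwise stabilizer of a finite set for $K_{n,\P}$) sitting inside the target group with finite index, using \itemref{lem:KPfromlocal}{2} and \cref{lem:rungsnotsingular} for $K_\P$ and local finiteness for $K_{n,\P}$. Your identification of the kernel of the action of $K_{n,\P}$ on the finite chamber set $C$ just makes explicit the finite-index step that the paper leaves implicit.
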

\begin{proof}
	Since $K_\P$ contains the setwise panel stabilizer $\U_{\{\P\}}$ as a subgroup of finite index, it suffices to note that $\U_{\{\P\}}$ is compact and open in $\U(\F)$ (as it is a union of finitely many cosets of chamber stabilizers), and hence in $\G(\F,\Facute)$ as well. The same argument works for $K_{n,\P}$ once we note that the set $\{c\in\Delta\mid\dist(c,\P)\leq n\}$ contains only finitely many panels.
\end{proof}

An immediate corollary is the following fact.
\begin{corollary}
	\label{cor:restrictedcompgen}
	The group $\G(\F,\Facute)$ is compactly generated.
\end{corollary}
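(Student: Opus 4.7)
The plan is to leverage the generating-set description from \cref{prop:restrictedcompgen}, which writes
\[ \G(\F,\Facute) = \bigl\langle \U(\F), K_\P \mid \P \in \H \bigr\rangle , \]
and to upgrade this to a compact generating set. First I would observe that $\H$ is finite by \cref{rem:Hfinite}, and that each $K_\P$ is compact by \cref{lem:KPcompact}. Hence the finite union $\bigcup_{\P \in \H} K_\P$ is already a compact subset of $\G(\F,\Facute)$. The remaining task is therefore to exhibit a compact generating set for the open subgroup $\U(\F)$; combining the two immediately yields the corollary.

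For compact generation of $\U(\F)$, I would invoke the standard fact that a tdlc group acting on a connected, locally finite graph with compact open vertex stabilizers and only finitely many vertex orbits is compactly generated. The group $\U(\F)$ acts on the chamber graph of $\Delta$, which is connected and locally finite (since $\Delta$ is locally finite); chamber stabilizers in $\U(\F)$ are compact open in the permutation topology and hence also in the topology of \cref{def:leboudectopology}; and by \cref{prop:universalorbits}, the orbits of $\U(\F)$ on chambers are in bijection with $\prod_{i \in I} \Omega_i / F_i$, which is finite because $\Delta$ is locally finite. Explicitly, for a fixed chamber $c_0$, a compact generating set is provided by the stabilizer $\U(\F)_{c_0}$ together with finitely many elements realizing, for each $i \in I$ and each $F_i$-orbit representative on $\Omega_i$, a chamber $c$ that is $i$-adjacent to $c_0$ and has the prescribed $i$-color; this is possible thanks to \cref{prop:universalorbits}.

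Combining both ingredients, the union of this compact generating set for $\U(\F)$ with the compact set $\bigcup_{\P \in \H} K_\P$ is a compact subset of $\G(\F,\Facute)$ that generates the whole group by \cref{prop:restrictedcompgen}. There is no serious obstacle here: the substantive content of the corollary lies entirely in \cref{prop:restrictedcompgen} (and hence ultimately in the extension result \cref{prop:evenmoretechnicalextensionstuff}), while the proof of compact generation itself is a straightforward bookkeeping argument assembling \cref{prop:restrictedcompgen}, \cref{rem:Hfinite}, and \cref{lem:KPcompact}.
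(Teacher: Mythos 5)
Your argument is correct and follows essentially the same route as the paper: combine the generating set of \cref{prop:restrictedcompgen} with the finiteness of $\H$ (\cref{rem:Hfinite}) and the compactness of each $K_\P$ (\cref{lem:KPcompact}). The only difference is that you re-derive the compact generation of $\U(\F)$ via the standard cocompact-action criterion, whereas the paper simply cites it as a known result from an earlier reference; both are fine, and the substance of the corollary is the same.
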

\begin{proof}
    First, recall from \cite[Theorem 4.8]{bossaert} that $\U(\F)$ is compactly generated.
	Combining \cref{lem:KPcompact,rem:Hfinite,prop:restrictedcompgen}, we now see that $\G(\F,\Facute)$ is generated by a finite number of compactly generated subgroups.
\end{proof}

From \cref{lem:KPcompact}, we can also deduce that panel stabilizers, although not compact, are still quite close to being compact, in the following precise sense.
\begin{definition}[regionally compact]
	A topological group is said to be \emph{regionally compact} if it is the increasing union of a family of compact open subgroups.
\end{definition}
\begin{remark}
    In the literature, regionally compact groups are also called \emph{locally elliptic}. The authors of \cite[Remark 1.0.1]{caprace_dense} argue why this terminology is best avoided: ``locally'' is suggestive of a property to be satisfied in some identity neighborhood basis, which is not the case here.
\end{remark}
	
\begin{corollary}
	\label{prop:panelstabregionallycompact}
	Let $\P$ be a panel of $\Delta$. Then the stabilizer $\G(\F,\Facute)_{\{\P\}}$ is regionally compact.
\end{corollary}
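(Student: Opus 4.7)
The plan is to exhibit $\G(\F,\Facute)_{\{\P\}}$ directly as an increasing union of the subgroups $K_{n,\P}$ introduced in \cref{def:KnP}. By \cref{lem:KPcompact}, each $K_{n,\P}$ is already known to be compact and open in $\G(\F,\Facute)$, so the only remaining points are the monotonicity of the family and the exhaustion of the stabilizer.

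First I would note that the inclusion $K_{n,\P}\subseteq K_{n+1,\P}$ is immediate from the definition: enlarging the radius can only weaken the constraint on the location of singularities. So the family $(K_{n,\P})_{n\geq 0}$ is an increasing family of compact open subgroups of $\G(\F,\Facute)_{\{\P\}}$.

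Next, I would verify the exhaustion $\G(\F,\Facute)_{\{\P\}}=\bigcup_{n\geq 0}K_{n,\P}$. The nontrivial inclusion is ``$\subseteq$'': given $g\in\G(\F,\Facute)_{\{\P\}}$, the set $S(g)$ of singularities is finite by the very definition of $\G(\F,\Facute)$. Choose a chamber $c\in\P$; then
\[ n := \max\bigl\{\dist(\P,\P')\bigm|\P'\in S(g)\bigr\} \]
is a well-defined non-negative integer (with the convention $n=0$ when $S(g)=\emptyset$), and by construction $g\in K_{n,\P}$.

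Combined with \cref{lem:KPcompact}, this displays $\G(\F,\Facute)_{\{\P\}}$ as the increasing union of the compact open subgroups $K_{n,\P}$, which is exactly the definition of regional compactness. I do not anticipate any real obstacle: the work has effectively already been done in \cref{def:KnP,lem:KPcompact}, and the present corollary is only repackaging the finiteness of $S(g)$ into a filtration.
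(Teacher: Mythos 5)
Your proof is correct and follows exactly the paper's approach: it exhibits $\G(\F,\Facute)_{\{\P\}}$ as the increasing union of the compact open subgroups $K_{n,\P}$ and invokes \cref{lem:KPcompact}. You merely spell out the inclusion and exhaustion details that the paper leaves implicit.
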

\begin{proof}
	Since we can write
	\[ \G(\F,\Facute)_{\{\P\}} = \dircup_{n\in\mathbb N} K_{n,\P} \,, \]
	the result follows immediately from \cref{lem:KPcompact}.
\end{proof}


\section{Simplicity}\label{sec:simplicity}

In order to study simplicity of the restricted universal groups $\G(\F,\Facute)$, we first observe that by \cref{lem:reducible}, this group will never be simple if $\Delta$ is a reducible building. We will nevertheless state some of our auxiliary results without assuming $\Delta$ to be irreducible.

Let us first recall the known simplicity result for the universal groups $\U(\F)$ from \cite{bossaert}.
As in \emph{loc.\@~cit.}, we will write
\[ \U(\F)^+ := \langle \U(\F)_c \mid c \in \Delta \rangle \]
for the open normal subgroup of $\U(\F)$ generated by all chamber stabilizers.
\begin{definition}
    A \emph{vertex cover} of a graph $\Gamma = (V, E)$ is a subset $W \subseteq V$ such that each edge $e \in E$ has at least one of its two vertices in $W$. A \emph{vertex cover} of the diagram $M$ is a vertex cover of its underlying graph (omitting the labels $\infty$).
\end{definition}
\begin{theorem}\label{thm:Usimple}
    Let $\Delta$ be irreducible.
    Assume that not all local groups $F_i$ are free (or equivalently, that $\U(\F)$ is non-discrete).
    Then the universal group $\U(\F)$ is (abstractly) simply if and only if the local groups $F_i$ are generated by point stabilizers for all $i \in I$ and are transitive for all $i$ in some vertex cover of the diagram $M$.
\end{theorem}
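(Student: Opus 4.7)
The plan is to work via the open normal subgroup $\U(\F)^+ = \langle \U(\F)_c \mid c \in \Delta\rangle$ and to reduce simplicity of $\U(\F)$ to two separate properties: the equality $\U(\F) = \U(\F)^+$, and the abstract simplicity of $\U(\F)^+$ itself.

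For \emph{necessity}, I would exhibit an explicit proper normal subgroup whenever either hypothesis fails. If some $F_i$ is not generated by point stabilizers, then the quotient $F_i / \langle (F_i)_x \mid x\in\Omega_i\rangle$ is nontrivial; composing the local action of an automorphism at a fixed $i$-panel with this quotient map yields a nontrivial homomorphism from $\U(\F)$ whose kernel contains $\U(\F)^+$, showing $\U(\F)$ is not simple. If no vertex cover of $M$ consists entirely of nodes with transitive local groups, then there exist two types $i,j$ with $m_{ij}=\infty$ and with neither $F_i$ nor $F_j$ transitive; from the orbit partitions of $\Omega_i$ and $\Omega_j$ one can construct a $\U(\F)$-equivariant ``checkerboard'' coloring on residues of type $\{i,j\}$ (well-defined because local data on parallel panels of a rank-$2$ residue agree), whose induced $\U(\F)$-action on color tuples has a proper normal kernel.

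For \emph{sufficiency}, the first step is to establish $\U(\F) = \U(\F)^+$. Given $g \in \U(\F)$, I would proceed by induction on the distance between a fixed reference chamber $c$ and $g\acts c$: the point stabilizer generation hypothesis allows constructing, at each panel along a minimal gallery from $c$ to $g\acts c$, a chamber stabilizer whose local action cancels that of $g$, with the required lift from local to global obtained via \cref{prop:ext-res}. The second step is to show $\U(\F)^+$ is abstractly simple, which I would derive from a Tits-style simplicity theorem for tdlc groups acting on buildings. The two required ingredients are Tits' independence property (Property P), which holds for $\U(\F)$ because local actions at parallel panels coincide (\cref{prop:parallellocals}) and so actions on opposite sides of a wall-residue decouple; and combinatorial density (the action preserves no proper invariant panel-closed subset), which follows from the vertex cover transitivity condition combined with \cref{prop:universalorbits}. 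The non-discreteness hypothesis ensures that $\U(\F)^+$ is itself nontrivial.

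The main obstacle, and the technically most delicate step, will be proving combinatorial density from the vertex cover assumption rather than from full transitivity of every $F_i$. The vertex cover framing is natural because any minimal gallery can, after using the commuting relations of the right-angled Coxeter system, be arranged so that consecutive non-commuting types are connected by an edge of the diagram; at each such step one invokes the transitive local group at one endpoint of the covering edge to realize the required chamber move via an element of $\U(\F)^+$. Formalizing this rigorously will require a careful induction on gallery length using the harmoniousness language of \cref{def:harmonious} together with \cref{prop:universalorbits} to propagate $\U(\F)$-orbit equality along galleries, and checking that the resulting transitivity on harmonious chambers rules out any proper invariant panel-closed subset.
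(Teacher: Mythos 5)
The paper does not reprove this theorem; its proof is a citation to \cite[Theorem~5.7]{bossaert}, so there is no in-text argument to compare against. Your proposed strategy --- reduce to the equality $\U(\F)=\U(\F)^+$, then establish abstract simplicity of the subgroup generated by fixators via the independence property and a density argument on tree-wall trees --- is indeed the route taken in \cite{bossaert}, and it is echoed in this paper's own \cref{prop:genbychamstab} (the vertex-cover-plus-point-stabilizer condition is equivalent to $\U(\F)^+$ having finite, equivalently trivial, index in $\U(\F)$) and \cref{prop:restricteduniversalmonolithic} (the restricted analogue of the simple monolith). Your first necessity argument, via the homomorphism $g\mapsto\sigma_\lambda(g,\P)\bmod\langle(F_i)_x\mid x\in\Omega_i\rangle$, is sound once one records the well-definedness lemma: if two $i$\dash panels meet chambers connected by a $j$\dash adjacency with $j\neq i$, their local actions agree on a common $i$\dash colour, so the class modulo $\langle(F_i)_x\rangle$ is independent of $\P$, and $\U(\F)^+$ lands in the kernel.

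However, there is a genuine confusion that leaves a gap in the sufficiency direction. You claim combinatorial density ``follows from the vertex cover transitivity condition combined with \cref{prop:universalorbits}'', and you describe it as ``the action preserves no proper invariant panel-closed subset''. Neither is right. In this paper, combinatorial density means minimality (no nontrivial invariant \emph{convex} chamber set) together with the absence of a fixed end of every tree-wall tree (\cref{def:minimalbuilding}), and it holds for $\U(\F)$ \emph{unconditionally} in the irreducible, rank $\geq2$ case: \cref{cor:combinatorially dense} deduces it from coboundedness of $\U(\mathbf 1)$, with no hypothesis on the local groups whatsoever. The vertex-cover condition is used for a different purpose --- it is exactly the content of $\U(\F)=\U(\F)^+$ (\cref{prop:genbychamstab}). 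Your Step~1 sketch for that equality invokes only ``point-stabilizer generation'' and an induction on distance, never the vertex-cover hypothesis; but to push $g$ towards the identity one chamber at a time, one must show the correcting automorphism lies in $\U(\F)^+$, and that is precisely where one needs to anchor it at a neighbouring chamber along a commuting type with a transitive local group. Because you have misassigned the role of the vertex-cover condition, Step~1 as written is incomplete, while Step~2 is overloaded with an argument it does not need. The ``checkerboard'' construction in your second necessity case is also too vague to assess, but that is a lesser concern.
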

\begin{proof}
    This is \cite[Theorem 5.7]{bossaert}.
    Notice that the condition that not all local groups $F_i$ be free was erroneously omitted in \emph{loc.\@~cit.}, but is required to exclude the possibility that $\U(\F)^+ = 1$, in which case we have not been able to determine whether or not the discrete group $\U(\F)$ is simple.
\end{proof}

The proof of this fact relies in an essential way on the fact that $\U(\F)$ is a \emph{closed} subgroup of $\Aut(\Delta)$.
For the restricted universal groups, we will only be able to show that $\G(\F,\Facute)$ is \emph{virtually} simple (i.e., that it contains a simple subgroup of finite index) under the same conditions for $\U(\Facute)$ to be simple. The proof is inspired by the proof of the related result for restricted universal groups for trees by Pierre-Emmanuel Caprace, Colin Reid, and Phillip Wesolek in \cite[Proposition 9.2.13]{caprace_dense}, but we will first need some preparation specific to the setting of right-angled buildings.

\medskip

We begin by recalling some definitions related to \emph{wings} and \emph{tree-walls} in right-angled buildings.
\begin{definition}
    \begin{enumerate}
        \item 
        	Let $i\in I$. An equivalence class of parallel $i$-panels is called an \emph{$i$-tree-wall} of $\Delta$. By slight abuse of notation, we often identify an $i$-tree-wall $\T$ with the set of chambers $\{c\in\P\mid\P\in\T\}$.
        \item\label{def:treewalltree}
        	Let $V_1$ be the set of all $i$-tree-walls of $\Delta$, and let $V_2$ be the set of all residues of $\Delta$ of type $I\setminus\{i\}$. Define a bipartite graph $\Gamma_i$ with vertex set $V_1\sqcup V_2$, where an $i$-tree-wall $\T$ is adjacent to a residue $\R$ of type $I\setminus\{i\}$ if and only if $\T\cap\R\neq\emptyset$. We call this graph the \emph{$i$-tree-wall tree} of $\Delta$.
        	By \cite[Proposition 2.39]{silva1}, each tree-wall tree is indeed a tree.
        	Notice that the \emph{edges} of $\Gamma_i$ correspond to residues of $\Delta$ of type $i^\perp$.
    \end{enumerate}
\end{definition}

\begin{definition}
    \begin{enumerate}
        \item 
        	Let $J\subseteq I$ and let $c\in\Delta$ be a chamber. Then the set of chambers
    		\[X_J(c) = \{d\in\Delta \mid \proj_\R(d) = c\}\]
    		where $\R$ is the $J$-residue containing $c$, is called the \emph{$J$-wing of $c$}. If $J=\{j\}$ is a singleton, we simply write $X_j(c)$ and call it the \emph{$j$-wing of $c$}.
        \item 
        	For each chamber $c\in\Delta$ and each $i\in I$, we define the subgroup
        	\[
        		V_i(c) = \{g\in \Aut(\Delta) \mid \text{$g\acts d=d$ for all chambers $d\notin X_i(c)$}\} \leq \Aut(\Delta).
        	\]
        	Note that $V_i(c)$ fixes all chambers of the $i$-tree-wall containing $c$.
        	Moreover, for each subgroup $G \leq \Aut(\Delta)$, we will write
        	\[
        		V_i^G(c) = V_i(c) \cap G.
        	\]
    \end{enumerate}
\end{definition}

Before we proceed, we focus on properties of the action of arbitrary subgroups of $\Aut(\Delta)$.
\begin{definition}
	\label{def:minimalbuilding}
	Let $G \leq \Aut(\Delta)$.
	\begin{enumerate}
		\item The action is \emph{cobounded} if there exists a constant $r\in\mathbb N$ and a chamber $c\in\Delta$ such that
			\[\Delta = \bigcup_{g\in G}\ball_r(g\acts c),\]
			i.e.,~if every chamber lies at a uniformly bounded distance to some $G$-orbit.
		\item The action is \emph{minimal} if $G$ leaves no nontrivial convex subset of $\Delta$ invariant.
		\item The action is \emph{combinatorially dense} if it is minimal and moreover, for every $i\in I$, the induced action on the $i$-tree-wall tree $\Gamma_i$ has no fixed point at infinity.
	\end{enumerate}
\end{definition}
These three notions are strongly related to each other; see \cref{lem:coboundeddense,prop:minimalcharacterisations}.
\begin{definition}
	For each $i\in I$, define the \emph{$i$-distance} $\dist_i(c_1,c_2)$ between any two chambers $c_1$ and $c_2$ as the number of occurences of type $i$ in a minimal gallery joining $c_1$ and $c_2$. Note that this number does not depend on the chosen minimal gallery. This distance function is only a pseudometric, since chambers in a common residue of type $I\setminus\{i\}$ are at $i$-distance zero.
\end{definition}
The $i$-distance function is quasi-isometric to the graph-theoretical edge distance in the $i$-tree-wall tree: this is the content of the following lemma.
\begin{lemma}
	\label{lem:quasiisometricaldistances}
	Let $c_1$ and $c_2$ be any two chambers of $\Delta$. Let $\R_1$ and $\R_2$ be the residues of type $i^\perp$ containing $c_1$ and $c_2$ respectively, viewed as edges in the $i$-tree-wall tree $\Gamma_i$. Then
	\[\dist_{\Gamma_i}(\R_1,\R_2) = 2\cdot\dist_i(c_1,c_2)+\epsilon \leq \dist(c_1,c_2),\qquad\text{where }\epsilon\in\{-1,0,1\}.\]
\end{lemma}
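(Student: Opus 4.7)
My plan is to correlate minimal galleries in $\Delta$ with walks in the tree-wall tree $\Gamma_i$, and conversely to lift the unique geodesic in $\Gamma_i$ back to a gallery in $\Delta$.

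Fix a minimal gallery $\gamma = (c_1 = d_0, d_1, \ldots, d_\ell = c_2)$ of type $(j_1, \ldots, j_\ell)$, and let $\rho_m$ be the $i^\perp$-residue of $\Delta$ containing $d_m$, so that $\rho_0 = \R_1$ and $\rho_\ell = \R_2$. A short case analysis on $j_m$ shows that the edges $\rho_{m-1}$ and $\rho_m$ of $\Gamma_i$ coincide when $j_m \in i^\perp$, share their $i$-tree-wall endpoint when $j_m = i$, and share their $(I\setminus\{i\})$-residue endpoint when $j_m \notin \{i\}\cup i^\perp$. Thus $(\rho_0,\ldots,\rho_\ell)$ is a walk in $\Gamma_i$ of length at most $\ell$, already yielding $\dist_{\Gamma_i}(\R_1,\R_2) \leq \dist(c_1,c_2)$. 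Writing $k = \dist_i(c_1,c_2)$, I further split the walk into $k+1$ ``stretches'' separated by the $k$ steps of type $i$. Each stretch lies in a common $(I\setminus\{i\})$-residue, so the corresponding $\rho_m$'s share a $\Gamma_i$-endpoint; hence the first and last $\rho_m$ of each stretch are at $\Gamma_i$-distance at most $1$, and summing gives $\dist_{\Gamma_i}(\R_1,\R_2) \leq (k+1) + k = 2k+1$.

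For the matching lower bound, I pass to the unique geodesic in the tree $\Gamma_i$ from $\R_1$ to $\R_2$. By the bipartite structure of $\Gamma_i$, this geodesic passes through some number $s$ of $i$-tree-wall vertices and a number of $(I\setminus\{i\})$-residue vertices differing from $s$ by at most $1$, so its total length is $2s + \epsilon'$ for some $\epsilon' \in \{-1, 0, 1\}$. I then lift the geodesic to a gallery from $c_1$ to $c_2$ in $\Delta$ by inductively chaining chamber choices through the successive edges: at each tree-wall vertex on the geodesic I use the product structure of the $(\{i\}\cup i^\perp)$-residue (granted by \itemref{prop:parallelinrab}{2}) to connect the two adjacent $i^\perp$-residues by a single $i$-step; at each intermediate $(I\setminus\{i\})$-residue vertex I connect the adjacent edges by a gallery of types in $I\setminus\{i\}$; and I close up inside $\R_1$ and $\R_2$ using only types in $i^\perp$. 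The resulting gallery uses exactly $s$ steps of type $i$, so $k \leq s$, and hence $\dist_{\Gamma_i}(\R_1,\R_2) = 2s + \epsilon' \geq 2k - 1$.

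Combining the estimates yields $\dist_{\Gamma_i}(\R_1,\R_2) = 2k + \epsilon$ with $\epsilon \in \{-1, 0, 1\}$, along with the already established upper bound $\leq \dist(c_1,c_2)$. I expect the lifting step to be the main obstacle: matching chamber choices across the geodesic requires the product decomposition of tree-walls to ensure that distinct $i^\perp$-residues within a common tree-wall are always linked by a single $i$-adjacency, while the $\pm 1$ in $\epsilon$ arises naturally from the possible parities of the geodesic's endpoint alternation.
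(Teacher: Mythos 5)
Your proof is correct, but it takes a genuinely different route from the paper's. The paper observes that for a minimal gallery $\gamma$, the induced walk in $\Gamma_i$ (once consecutive repetitions are suppressed) is already a path without backtracking; since $\Gamma_i$ is a tree, this path \emph{is} the geodesic, and the bijection between $i$-adjacencies on $\gamma$ and passages through tree-wall vertices then yields the formula directly from one reading of a single gallery. You instead bracket $\dist_{\Gamma_i}(\R_1,\R_2)$ between two independent estimates: an upper bound $\leq 2k+1$ extracted from the walk induced by a minimal gallery, and a lower bound $\geq 2k-1$ obtained by lifting the $\Gamma_i$-geodesic back to a gallery with exactly one $i$-step per tree-wall vertex. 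This avoids having to justify the ``no backtracking'' claim (which in the paper rests implicitly on convexity of $i^\perp$-residues along a minimal gallery), at the price of two extra inputs you use but do not quite justify: (1) that $\dist_i(c_1,c_2)$ is a lower bound for the number of $i$-steps in \emph{any} gallery from $c_1$ to $c_2$, not just a minimal one (this holds in a right-angled Coxeter group because non-reduced words are obtained from reduced ones only by inserting cancelling pairs and applying commutation moves, neither of which decreases letter multiplicities); and (2) that two distinct $i^\perp$-residues inside a common tree-wall are linked by a single $i$-step, which follows from the direct-product decomposition of residues of type $\{i\}\cup i^\perp$ rather than from \itemref{prop:parallelinrab}{2} as cited. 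With these two facts spelled out, your two-sided argument is valid and arguably more transparent about where the $\epsilon\in\{-1,0,1\}$ comes from, whereas the paper's one-sided argument is shorter because it leans on the geodesicity of the induced path.
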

\begin{proof}
	The inequality is clear. For the equality, pick any minimal gallery $\gamma$ from $c_1$ to $c_2$ in $\Delta$ and consider the induced path in the $i$-tree-wall tree, which is a path without backtracking. Notice that we have a one-to-one correspondence between $i$-adjacencies on $\gamma$ on the one hand, and pairs of adjacent edges in $\Gamma_i$ sharing a common vertex in the bipartition class of residues of type $i\cup i^\perp$ on the other hand. Consequently, if the path in $\Gamma_i$ has even length, say $2n$, there are $n$ adjacencies of type $i$ on $\gamma$, and if the path has odd length, say $2n+1$, there are either $n$ or $n+1$ adjacencies of type $i$ on $\gamma$. Our conclusion follows.
\end{proof}
\begin{corollary}\label{cor:cobounded-TWT}
	Let $G \leq \Aut(\Delta)$.
	If the action of $G$ is cobounded, then its induced action on each tree-wall tree $\Gamma_i$ is also cobounded.
\end{corollary}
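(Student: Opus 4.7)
The plan is to transfer the coboundedness of the action on $\Delta$ to the action on each tree-wall tree $\Gamma_i$ by exploiting the quasi-isometric comparison supplied by \cref{lem:quasiisometricaldistances}. I would begin with a chamber $c \in \Delta$ and a radius $r \in \mathbb{N}$ witnessing coboundedness on $\Delta$, and fix a base vertex $v_0$ in $\Gamma_i$ attached to $c$---for instance, an endpoint of the edge of $\Gamma_i$ corresponding to the $i^\perp$-residue $\R_c$ containing $c$.

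Given an arbitrary vertex $v \in \Gamma_i$, the next step is to attach a chamber $d$ to $v$ so that the $i^\perp$-residue $\R_d$ containing $d$ is incident to $v$ as an edge of $\Gamma_i$: when $v$ is an $(I \setminus \{i\})$-residue take $d \in v$, and when $v$ is an $i$-tree-wall take $d$ in any of the panels making it up. Coboundedness of the action on $\Delta$ then produces some $g \in G$ with $\dist(d, g \acts c) \leq r$, and \cref{lem:quasiisometricaldistances} directly gives $\dist_{\Gamma_i}(\R_d, g \acts \R_c) \leq r$.

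A short bookkeeping argument, using that $v$ is an endpoint of the edge $\R_d$ and that $g \acts v_0$ is an endpoint of the edge $g \acts \R_c$, then converts this into a vertex-distance bound of the form $\dist_{\Gamma_i}(v, g \acts v_0) \leq r + O(1)$, which yields $\Gamma_i = \bigcup_{g \in G} \ball_{r + O(1)}(g \acts v_0)$, as desired. There is no serious obstacle: the argument is essentially a direct application of \cref{lem:quasiisometricaldistances}; the only mild subtlety is translating the edge-based bound of that lemma into a vertex-based bound on $\Gamma_i$, which costs only a small additive constant depending on the bipartite type of $v$.
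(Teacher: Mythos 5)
Your argument is correct and matches the paper's proof, which simply declares the corollary an immediate consequence of \cref{lem:quasiisometricaldistances}; you have merely spelled out the routine details (attaching an incident $i^\perp$-residue/edge to each vertex of $\Gamma_i$ and absorbing the edge-to-vertex translation into a bounded additive constant) that the paper leaves implicit.
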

\begin{proof}
    This is an immediate consequence of \cref{lem:quasiisometricaldistances}.
\end{proof}

\begin{proposition}
	\label{lem:coboundeddense}
		Let $G \leq \Aut(\Delta)$ and assume that the diagram $M$ has no isolated nodes. If the action of $G$ is cobounded, then it is combinatorially dense.
\end{proposition}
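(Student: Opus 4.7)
The strategy is to establish each of the two components of combinatorial density separately, both by contradiction. The key structural input from the hypothesis ``$M$ has no isolated nodes'' is the following \emph{branching lemma}: for each $i \in I$ there exists $j \in I \setminus (\{i\} \cup i^\perp)$ with $m_{ij} = \infty$, and unwinding the definition of $\Gamma_i$, this forces every type-(b) vertex (i.e.\ every residue $\R$ of type $I \setminus \{i\}$) to have infinite degree in $\Gamma_i$, because its neighbors correspond bijectively to the $i^\perp$-residues inside $\R$ and there are infinitely many such residues as soon as $i^\perp \subsetneq I\setminus\{i\}$.

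For the absence of a fixed end, suppose toward a contradiction that $G$ fixes some $\omega \in \partial\Gamma_i$. By \cref{cor:cobounded-TWT}, $G$ acts coboundedly on $\Gamma_i$ as well, say with constant $r'$. Fixing $\omega$ provides a Busemann homomorphism $\chi\colon G \to \mathbb{Z}$ characterised by $\beta_\omega(g \cdot v) = \beta_\omega(v) - \chi(g)$. Pick a base vertex $v_0$ on the geodesic ray toward $\omega$. By the branching lemma, at each type-(b) vertex $\R$ on this ray there are infinitely many edges pointing away from $\omega$, spanning pairwise disjoint infinite subtrees; in particular, for each $N \in \mathbb{N}$ one finds vertices $w_N$ of $\Gamma_i$ at the same Busemann level as $v_0$ but at graph-distance at least $N$ from the ray. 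Coboundedness demands elements $g_N \in G$ with $d_{\Gamma_i}(w_N, g_N v_0) \leq r'$, which pins the Busemann shift $\chi(g_N)$ into a bounded range. Combined with the fact that $G$ fixes $\omega$, and hence preserves at each branch point the unique geodesic direction towards $\omega$, this ultimately forces infinitely many pairwise-distant orbit representatives to lie in a ball of fixed radius around $v_0$, contradicting the local finiteness of $\Gamma_i$ at the relevant type-(a) vertices.

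For minimality, assume $C \subsetneq \Delta$ is a nonempty $G$-invariant convex subset. Choosing a base chamber in $C$ and applying coboundedness yields $\ball_r(C) = \Delta$. On the other hand, pick any $d \notin C$ and let $c_0 = \proj_C(d)$ via \cref{lem:convexgateproperty}, so $d \sim_j c_0$ for some $j \in I$. The $j$-wing $X_j(d)$ is disjoint from $C$: for every $e \in X_j(d)$, the equality $\proj_{\P_j(c_0)}(e) = d$ forces every minimal gallery from $e$ to $c_0$ to pass through $d$, so if $e \in C$ then convexity of $C$ would give $d \in C$, contradicting $d \notin C$. Since $j$ is not isolated, pick $k \in I$ with $m_{jk} = \infty$; inside the $\{j,k\}$-residue containing $d$ (which is a rank-$2$ building of type $\infty$) the $d$-side of $\P_j(c_0)$ contains chambers at arbitrary distance from $c_0$, all lying in $X_j(d)$. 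This violates $\ball_r(C) = \Delta$.

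The main obstacle is executing the branching argument rigorously in the no-fixed-end case: the qualitative ``infinite branching'' of $\Gamma_i$ must be turned into a quantitative contradiction with coboundedness on $\Gamma_i$, which requires carefully balancing the ``longitudinal'' Busemann shift $\chi(G)$ against the ``transverse'' branching off the geodesic ray to $\omega$. A clean formulation will likely hinge on the observation that the stabiliser of $v_0$ in $G$ acts on the infinite family of side-branches at each type-(b) vertex on the ray, and combining this with the coarse density of the orbit ought to yield the required contradiction.
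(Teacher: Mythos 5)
Your treatment of minimality is a genuinely different route from the paper, which disposes of that half by citing \cite[Lemma 4.5]{bossaert}; your direct argument is essentially sound but needs two patches. First, \cref{lem:convexgateproperty} is stated for \emph{panel-closed} sets, so you cannot invoke $\proj_C(d)$ for a merely convex invariant set $C$ -- but you only need \emph{some} chamber $d\notin C$ that is $j$-adjacent to some $c_0\in C$, which exists by gallery-connectedness. Second, and more substantively, your final contradiction requires chambers at arbitrary distance \emph{from $C$}, whereas you only produce chambers at arbitrary distance from $c_0$ that avoid $C$. This is repairable: writing $\P$ for the $j$-panel through $c_0$ and $d$, every $f\in C$ has $\proj_{\P}(f)\neq d$ (otherwise convexity of $C$ forces $d\in C$), hence $\proj_{\R'}(f)$ lies on the side of $\P$ away from $d$ inside the $\{j,k\}$-residue $\R'$, and the gate property of $\R'$ gives $\dist(e,f)\geq\dist(e,d)+1$ for your chambers $e$ deep in $X_j(d)\cap\R'$; so $\dist(e,C)$ is indeed unbounded.

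The genuine gap is the no-fixed-end half, which you yourself leave open. Your ``branching lemma'' is false under the stated hypotheses: already for $I=\{i,j\}$ with $m_{ij}=\infty$ (or a star diagram with centre $i$), one has $i^\perp=\emptyset\subsetneq I\setminus\{i\}$ while every residue of type $I\setminus\{i\}$ is finite, so the type-(b) vertices of $\Gamma_i$ have finite degree $q_j$. More importantly, no contradiction is actually derived at the end, and none can be derived from the ingredients your sketch uses. You use only that the induced action on $\Gamma_i$ is cobounded (\cref{cor:cobounded-TWT}), that the tree branches, that an end $\omega$ is fixed, and that the witnesses $g_N$ have bounded Busemann shift; but these facts are mutually consistent -- the full stabilizer of an end of a thick biregular tree acts with finitely many vertex orbits while fixing that end and containing hyperbolic elements of every even translation length, so bounded shifts of the $g_N$ never force ``infinitely many pairwise-distant orbit representatives into a fixed ball''. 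The deferred step therefore requires a genuinely new input, not just a more careful bookkeeping of Busemann levels versus transverse branching. The paper's proof supplies its input differently: it asserts that a group fixing $\xi\in\partial\Gamma_i$ stabilizes every horosphere centred at $\xi$, and then observes that each horosphere lies at unbounded distance from some vertices of the unbounded tree $\Gamma_i$, contradicting coboundedness. Your outline neither proves such a stabilization statement (indeed it explicitly allows nonzero Busemann shifts) nor offers a substitute, so as it stands the second half of the proposition is not proved.
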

\begin{proof}
    The fact that the action of $G$ is \emph{minimal} was already shown in \cite[Lemma 4.5]{bossaert}.
	Now let $i\in I$ and consider the $i$-tree-wall tree $\Gamma_i$. By \cref{cor:cobounded-TWT}, the induced action of $G$ on $\Gamma_i$ is also cobounded. Note that $\Gamma_i$ is unbounded, as $i$ is not an isolated node of the diagram. If $G$ would fix a point at infinity $\xi$ of the $i$-tree-wall tree, then it would stabilize all \emph{horospheres} through $\xi$. However, for each horosphere, there are vertices of the tree at unbounded distance to that horosphere, contradicting the coboundedness.
\end{proof}
\begin{corollary}\label{cor:combinatorially dense}
    Assume that the diagram $M$ has no isolated nodes.
    Then the universal groups $\U(\F)$ and the restricted universal groups $\G(\F,\Facute)$ are combinatorially dense.
\end{corollary}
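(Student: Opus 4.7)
The plan is to invoke Proposition~\ref{lem:coboundeddense}, which under the assumption that $M$ has no isolated nodes reduces combinatorial density to coboundedness of the action on $\Delta$. Since $\U(\F) \leq \G(\F,\Facute)$, any cobounded action of the smaller group immediately yields a cobounded action of the larger one (with the same base chamber and the same radius will serve both), so the task reduces to showing that $\U(\F)$ itself acts coboundedly on $\Delta$.

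To establish this coboundedness, I would appeal to Proposition~\ref{prop:universalorbits}: two chambers $c, c'$ lie in a common $\U(\F)$-orbit precisely when they are harmonious, i.e., when $\lambda_i(c)$ and $\lambda_i(c')$ lie in a common $F_i$-orbit for every $i \in I$. Under the standing conventions (finite index set $I$ and finite color sets $\Omega_i$), there are at most $\prod_{i \in I} \lvert \Omega_i/F_i\rvert$ such harmonious classes, hence only finitely many $\U(\F)$-orbits on $\Delta$. Choosing representatives $c_1,\dots,c_k$ of these orbits and setting $r := \max_{1 \leq j \leq k} \dist(c_1,c_j)$, every chamber of $\Delta$ lies in some orbit $\U(\F)\acts c_j$ and therefore in $\ball_r(g\acts c_1)$ for a suitable $g \in \U(\F)$. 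This gives $\Delta = \bigcup_{g \in \U(\F)} \ball_r(g\acts c_1)$, witnessing coboundedness.

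No substantial obstacle is anticipated: the argument is essentially a direct packaging of Propositions~\ref{prop:universalorbits} and~\ref{lem:coboundeddense}. The only care required is to observe that coboundedness transfers along the inclusion $\U(\F) \leq \G(\F,\Facute)$, and that the orbit count is indeed finite; both are immediate bookkeeping under the standing finiteness assumptions, and one could equivalently have used Proposition~\ref{prop:restrictedorbits} to argue coboundedness of $\G(\F,\Facute)$ directly.
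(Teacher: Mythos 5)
Your proposal is correct and follows essentially the same route as the paper: both reduce the statement to coboundedness of the action on $\Delta$ and then invoke \cref{lem:coboundeddense}, with coboundedness passing up along $\U(\F)\leq\G(\F,\Facute)$. The only (minor) difference is in the witness for coboundedness: the paper simply cites that already $\U(\mathbf{1})$ is cobounded, whereas you derive coboundedness of $\U(\F)$ from the finiteness of the number of harmonious classes via \cref{prop:universalorbits}, which is a valid alternative under the standing finiteness assumptions (finite rank and locally finite $\Delta$).
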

\begin{proof}
    It suffices to observe that already $\U(\mathbf{1})$ is cobounded (where $\mathbf{1}$ is the local data with $F_i = 1$ for all $i$), as in \cite[Corollary 3.4]{bossaert}, so also any larger subgroup of $\Aut(\Delta)$ is cobounded. The result now follows from \cref{lem:coboundeddense}.
\end{proof}

\begin{proposition}
	\label{prop:minimalcharacterisations}
	Let $G \leq \Aut(\Delta)$ and assume that the diagram $M$ has no isolated nodes. Then the following are equivalent:
	\begin{enumerate}[label={\rm (\alph*)},leftmargin=6ex]
		\item the action of $G$ on $\Delta$ is minimal;
		\item for each $i\in I$, the induced action of $G$ on the $i$-tree-wall tree $\Gamma_i$ is minimal.
	\end{enumerate}
	As a corollary, the following are equivalent:
	\begin{enumerate}[label={\rm (\alph*)},leftmargin=6ex]
		\item the action of\, $G$ on $\Delta$ is combinatorially dense;
		\item for each $i\in I$, the induced action of $G$ on the $i$-tree-wall tree $\Gamma_i$ is combinatorially dense.
	\end{enumerate}
\end{proposition}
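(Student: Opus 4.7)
The strategy is to establish the main equivalence (a) $\Leftrightarrow$ (b) first; the combinatorial-density corollary then follows at once, since ``no fixed point at infinity on each $\Gamma_i$'' appears verbatim on both sides of the definition of combinatorial density.

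For (a) $\Rightarrow$ (b) I argue by contrapositive. Given a proper nonempty $G$-invariant subtree $T \subsetneq \Gamma_i$, I propose to construct a proper nonempty $G$-invariant convex subset of $\Delta$, namely
\[
    C = \{ c \in \Delta : \text{the } (I \setminus \{i\})\text{-residue and the } i\text{-tree-wall through } c \text{ both belong to } T \};
\]
equivalently, $C$ consists of all chambers whose $i^\perp$-residue (viewed as an edge of $\Gamma_i$) is an edge of $T$. This set is $G$-invariant by construction, and nonempty as soon as $T$ has an edge. Convexity of $C$ will follow from \cref{lem:quasiisometricaldistances}: a minimal gallery in $\Delta$ induces a walk in $\Gamma_i$ whose distinct-edge reduction is a geodesic between the $i^\perp$-residues of its endpoints, and such a geodesic between two edges of the subtree $T$ stays inside $T$ by convexity. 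Properness is immediate: any vertex $v \notin T$ contains a chamber whose $(I \setminus \{i\})$-residue or $i$-tree-wall equals $v$, hence fails to lie in $C$. The degenerate case $|V(T)| = 1$ is handled directly by taking $C$ to be the single vertex of $T$ viewed as its set of chambers; this is a residue of type $I \setminus \{i\}$ or $\{i\} \cup i^\perp$, hence convex, $G$-invariant, nonempty, and proper in $\Delta$ -- properness of the tree-wall case uses the no-isolated-nodes hypothesis. In every case this contradicts minimality of $G$ on $\Delta$.

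For (b) $\Rightarrow$ (a), let $X \subseteq \Delta$ be nonempty, $G$-invariant, and convex. The set
\[
    T_i(X) = \{ v \in V(\Gamma_i) : v \cap X \neq \emptyset \}
\]
is nonempty and $G$-invariant, and is connected -- hence a subtree of $\Gamma_i$ -- because for any two of its vertices one picks witnessing chambers and a minimal gallery between them (which lies in $X$ by convexity), reading off a walk in $\Gamma_i$ whose every vertex lies in $T_i(X)$. Minimality on $\Gamma_i$ thus forces $T_i(X) = \Gamma_i$; in particular, $X$ meets every $(I \setminus \{i\})$-residue for every $i \in I$. To conclude $X = \Delta$, suppose for contradiction some chamber $c$ lies outside $X$, and pick $d \in X$ minimizing $\dist(c, d) = k \geq 1$. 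Letting $d \sim_i d_1$ be the first step of a minimal gallery from $d$ to $c$, minimality of $d$ forces $d_1 \notin X$. Pick $z \in X$ in the $(I \setminus \{i\})$-residue through $d_1$; the concatenated gallery $d \sim_i d_1 \leadsto z$ has type $i\,w$ with $w$ a reduced word over $I \setminus \{i\}$. Since the lone occurrence of $i$ has no $i$-partner in $w$ to cancel with, and commutations can only shuffle it past letters in $i^\perp$ without creating a cancellation, the word $i\,w$ is reduced; hence the gallery is minimal, and convexity of $X$ forces $d_1 \in X$, contradicting the choice of $d$.

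The main obstacle I expect is the convexity of $C$ in the first implication: making the heuristic ``induced walks of minimal galleries are edge-geodesics in $\Gamma_i$'' fully precise amounts to extracting from \cref{lem:quasiisometricaldistances} the absence of essential backtracking in the walk on $\Gamma_i$ traced by a minimal gallery in $\Delta$, so that the walk's underlying geodesic really does stay inside the convex subtree $T$.
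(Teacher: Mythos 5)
Your direct argument for (b) $\Rightarrow$ (a) is correct and genuinely different from the paper's treatment of that implication (the paper argues by contrapositive, manufacturing a nontrivial invariant subtree from a boundary adjacency $c\sim_i d$ of a convex invariant set, using a commuting type and the gate property): showing that a nonempty convex invariant $X$ meeting every $(I\setminus\{i\})$-residue must be all of $\Delta$, via the observation that $i\,w$ stays reduced when $w$ is a reduced word over $I\setminus\{i\}$, is a clean and valid route, and your connectedness argument for $T_i(X)$ is fine.

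The genuine gap is in your (a) $\Rightarrow$ (b) direction, exactly at the step you flagged, and it cannot be ``extracted'' from \cref{lem:quasiisometricaldistances}, because the fact you need is false. The non-backtracking behind that lemma is a statement about \emph{vertices}: a minimal gallery never re-enters an $(I\setminus\{i\})$-residue or an $i$-tree-wall it has left. It does \emph{not} imply that the $i^\perp$-residues (edges of $\Gamma_i$) of the successive chambers all lie on the geodesic between the first and the last edge. Concretely, choose $j_1\neq j_2$ in $I\setminus\{i\}$ with $m_{ij_1}=m_{ij_2}=\infty$ and chambers $x\sim_{j_1}y\sim_{j_2}z$: this is a minimal gallery contained in a single $(I\setminus\{i\})$-residue $\R$, yet it meets three distinct $i$-tree-walls $\T_1,\T_2,\T_3$, so the induced edges are $\{\R,\T_1\},\{\R,\T_2\},\{\R,\T_3\}$ and the middle one dangles off the vertex $\R$, away from the geodesic between the outer two. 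Hence for a subtree $T$ containing the edges $\{\R,\T_1\}$ and $\{\R,\T_3\}$ but not the vertex $\T_2$ --- and your construction must work for an \emph{arbitrary} invariant subtree, so this shape cannot be excluded --- your set $C$ contains $x$ and $z$ but not $y$, and is therefore not convex. The repair is to weaken your membership condition from ``the edge determined by the chamber lies in $T$'' to ``at least one of the two vertices of $\Gamma_i$ determined by the chamber lies in $T$'', i.e.\ take the union of all residues of type $I\setminus\{i\}$ or $\{i\}\cup i^\perp$ corresponding to vertices of $T$ (this is what the paper does): the offending chamber $y$ then still belongs to the set because it lies in $\R\in T$, convexity does follow from the vertex-level non-backtracking, invariance is clear, and properness survives because a proper subtree misses some vertex and (using $q_i\geq 2$ and the absence of isolated nodes, so every vertex of $\Gamma_i$ has degree at least $2$) one finds an edge with both endpoints outside $T$, whose chambers avoid the set.
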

\begin{proof}
	First assume $G$ does not act minimally on $\Delta$, i.e., there exists a nontrivial convex $G$-invariant subset $C$ of chambers. Let $c\sim_i d$ be such that $c\in C$ and $d\notin C$. Using absence of isolated nodes in the diagram, take $j\in I$ such that $m_{ij}=2$, and let $e\sim_j d$. By convexity, $e\notin C$. Now define $C'$ as the set of vertices in $\Gamma_i$ that correspond to residues of $\Delta$ having nonempty intersection with $C$,
	\[C' = \bigl\{\R\in\Res_{I\setminus\{i\}}(\Delta) \bigm| \R\cap C \neq \emptyset\bigr\}
		\cup \bigl\{\R\in\Res_{\{i\}\cup\{i\}^\perp}(\Delta) \bigm| \R\cap C \neq \emptyset\bigr\}.\]
	Note that $C'$ is a nonempty $G$-invariant subtree of $\Gamma_i$. Moreover, we claim that $e$ is not contained in any residue in $C'$. Indeed, if it were the case that $e\in\R\in C'$, let $e'\in \R\cap C$. The projection $\proj_\R(c)$ is either $d$ or $e$, depending on the type of~$\R$. By the gate property, there is a minimal gallery joining $c$ to $e'$ passing through either $d$ or $e$, contradicting the convexity of $C$. Hence, $C'$ is a \emph{nontrivial} $G$-invariant subtree, and the action of $G$ on $\Gamma_i$ is not minimal.
	
	Conversely, assume that $G$ does not act minimally on $\Gamma_i$ for some $i\in I$, i.e.~there exists a nontrivial $G$-invariant subtree $D$. Define $D'$ as the set of chambers of $\Delta$ that lie in a residue corresponding to a vertex in $D$,
	\[D' = \bigl\{c\in\Delta \bigm| \text{$c\in\R\in D$ for some residue $\R$ of type $I\setminus\{i\}$ or type $\{i\}\cup \{i\}^\perp$}\bigr\}.\]
	Clearly $D'$ is a nontrivial $G$-invariant subset of chambers. Let $c,c'\in D'$ and consider a minimal gallery $\gamma$ from $c$ to $d$ in $\Delta$. Then $\gamma$ descends to a path in~$\Gamma_i$ between the residues in $D$ containing $c$ and $d$. This path is contained in the subtree~$D$, hence $\gamma$ is contained in $D'$. In other words, $D'$ is a nontrivial \emph{convex} $G$-invariant subsystem, and the action of $G$ on $\Delta$ is not minimal.
\end{proof}

\begin{lemma}\label{prop:normaltreedense}
    Let $T$ be a tree and $G \leq \Aut(T)$.
    If $G$ is combinatorially dense, then so is any non-trivial normal subgroup $N \unlhd G$.
\end{lemma}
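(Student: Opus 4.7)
The plan is to verify, for a non-trivial normal subgroup $N \unlhd G$, both defining conditions of combinatorial density: minimality of $N$'s action on $T$ and the absence of an end of $T$ fixed by every element of $N$. The recurring principle is that any ``$N$-canonical'' subset of $T$ or of $\partial T$ is automatically $G$-invariant (by normality of $N$ in $G$), and can then be confronted with the minimality of~$G$.

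I would first show that $N$ has no common fixed vertex. The fixed subtree $T^N = \{v \in T : nv = v \text{ for all } n \in N\}$ is $G$-invariant, because for $v \in T^N$ and $g \in G$, every $n \in N$ satisfies $n(gv) = g(g^{-1}ng)v = gv$, using $g^{-1}ng \in N$. Minimality of $G$ forces $T^N$ to be either all of $T$ or empty; the former is ruled out by faithfulness of the action combined with $N \neq 1$. Next I would run the analogous argument on ends: the set $\partial^N T \subseteq \partial T$ of ends fixed by every element of $N$ is $G$-invariant, and I would case-split on its cardinality. If $\lvert \partial^N T \rvert = 1$, the unique $N$-fixed end is also $G$-fixed, contradicting combinatorial density of $G$. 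If $\lvert \partial^N T \rvert \geq 3$, then any three distinct $N$-fixed ends determine a unique ``center'' vertex---the branch point of the three geodesic rays between them---which is preserved by every $n \in N$ (since $n$ fixes each of the three ends, it sends this uniquely determined center to itself) and thus lies in $T^N$, contradicting the previous step. If $\lvert \partial^N T \rvert = 2$, then $G$ must swap the two ends (otherwise one would be $G$-fixed), whence $G$ preserves the unique geodesic line joining them and, by minimality of $G$, this line is all of $T$, placing us in the degenerate case where $T$ itself is a line.

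For the minimality of $N$'s action, I would invoke Serre's dichotomy: since $N$ fixes no vertex and no end, it cannot consist entirely of elliptic elements, and hence contains a hyperbolic isometry. The union $T_N$ of the translation axes of all hyperbolic elements of $N$ is then the unique minimal $N$-invariant subtree. Moreover, $T_N$ is itself $G$-invariant, since conjugation by $g \in G$ permutes the hyperbolic elements of $N$ bijectively (by normality) and sends axes to axes. Minimality of $G$ then forces $T_N = T$, so $N$ acts minimally on $T$. The main obstacle to executing this cleanly is the degenerate case $\lvert \partial^N T \rvert = 2$ where $T$ is forced to be a line: there $N$ may consist of pure translations fixing both ends, and the statement as given requires this case either to be excluded by additional hypothesis or to be handled ad hoc---it does not arise in the intended applications, where $T$ is a tree-wall tree of a thick irreducible right-angled building and in particular has many ends.
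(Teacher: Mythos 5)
The paper does not supply its own argument here; it cites Tits' Lemme 4.4 from \cite{tits_arbres}. Your reconstruction is sound and is, in essence, the argument one expects: by normality, both the $N$-fixed subtree $T^N$ and the set $\partial^N T$ of $N$-fixed ends are $G$-invariant; minimality and faithfulness kill $T^N$; the trichotomy on $\lvert \partial^N T\rvert$ is dispatched by handing a fixed end to $G$ (when $\lvert\partial^N T\rvert = 1$) or a fixed median vertex to $N$ (when $\lvert\partial^N T\rvert \geq 3$); and minimality of $N$ then follows because the union of axes of hyperbolic elements of $N$ is a nonempty $G$-invariant subtree (conjugation permutes hyperbolics of $N$ and carries axes to axes). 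The boundary case you flag, $\lvert\partial^N T\rvert = 2$, is indeed a genuine exception to the lemma as literally worded: for $T$ a bi-infinite line, $G$ the infinite dihedral group and $N$ its translation subgroup, $G$ is minimal with no fixed end (it swaps the two ends), yet $N$ fixes both ends, so $N$ is not combinatorially dense. You are also right that this does not affect the paper: the lemma is invoked only for tree-wall trees $\Gamma_i$ of a thick irreducible right-angled building, and a tree-wall vertex of $\Gamma_i$ has degree $q_i \geq 3$, so $\Gamma_i$ is never a line. So either the cited Tits statement carries a hypothesis that excludes an invariant pair of ends, or the paper's restatement is slightly loose; in any case your proof is complete in the setting in which it is used, and your explicit acknowledgment of the degenerate case is the right way to handle it.
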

\begin{proof}
    This is \cite[Lemme 4.4]{tits_arbres}.
\end{proof}

For the next proposition, we will have to make the stronger assumption that $\Delta$ be \emph{irreducible}. (The statement is clearly false when $\Delta$ is reducible.)
\begin{proposition}
	\label{lem:combdensenormal}
	Assume that the diagram $M$ is irreducible. Let $G \leq \Aut(\Delta)$ be combinatorially dense and let $N\unlhd G$ be a nontrivial normal subgroup. Then also the action of $N$ on $\Delta$ is combinatorially dense.
\end{proposition}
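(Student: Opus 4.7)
The plan is to reduce via the tree-wall trees $\Gamma_i$ to Tits' result on trees (\cref{prop:normaltreedense}), and then to use the irreducibility of $M$ to exclude the case in which $N$ acts trivially on some $\Gamma_i$. More precisely, by (the corollary in) \cref{prop:minimalcharacterisations} it suffices to show that $N$ acts combinatorially densely on every $\Gamma_i$. For each $i \in I$, let $\bar N_i \leq \bar G_i \leq \Aut(\Gamma_i)$ denote the images of the induced actions of $N$ and $G$. Then $\bar N_i \unlhd \bar G_i$, and $\bar G_i$ is combinatorially dense on $\Gamma_i$ by the same corollary. \Cref{prop:normaltreedense} will then yield combinatorial density of $\bar N_i$---and hence of $N$---on $\Gamma_i$, \emph{provided that} $\bar N_i \neq 1$. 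The heart of the proof is therefore to show that for every $i \in I$, $N$ is not contained in the kernel $K_i$ of the homomorphism $G \to \Aut(\Gamma_i)$; concretely, $K_i$ consists of those $g \in G$ that stabilize every $(I\setminus\{i\})$-residue setwise.

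Set $I_0 := \{i \in I : N \leq K_i\}$ and suppose for a contradiction that $I_0 \neq \emptyset$. Since for any chamber $c$ the intersection over $i \in I$ of the $(I\setminus\{i\})$-residues containing $c$ equals $\{c\}$, the case $I_0 = I$ would force $N$ to fix every chamber, contradicting $N \neq 1$. Hence $I_0 \subsetneq I$, and by irreducibility of $M$ we may pick $i_0 \in I_0$ and $j_0 \in I \setminus I_0$ with $m_{i_0 j_0} = \infty$; for this $j_0$ we have $\bar N_{j_0} \neq 1$, so by the first paragraph $N$ acts combinatorially densely---in particular minimally---on $\Gamma_{j_0}$. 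The contradiction will come from exhibiting a proper non-trivial convex $N$-invariant subtree of $\Gamma_{j_0}$.

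Since $m_{i_0 j_0} = \infty$ implies $i_0 \notin j_0^\perp$, any two parallel $j_0$-panels are connected by a gallery of type contained in $j_0^\perp \subseteq I \setminus \{i_0\}$ (by \itemref{prop:parallelinrab}{2}), and hence every $j_0$-tree-wall is entirely contained in a unique $(I\setminus\{i_0\})$-residue. Fix any $(I\setminus\{i_0\})$-residue $\R$ and let $V_\R$ be the set of vertices of $\Gamma_{j_0}$ consisting of all $j_0$-tree-walls contained in $\R$ together with all $(I\setminus\{j_0\})$-residues of $\Delta$ that meet $\R$. Because $N \leq K_{i_0}$ preserves $\R$ setwise, $V_\R$ is $N$-invariant; it is non-empty (it contains the $j_0$-tree-wall of any chamber of $\R$) and a proper subset of the vertex set of $\Gamma_{j_0}$ (there exist $j_0$-tree-walls in other $(I\setminus\{i_0\})$-residues). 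The main remaining step---and the expected obstacle---is to verify convexity: for any $u,v \in V_\R$, each can be represented by a chamber of $\R$, and since $\R$ is convex, any minimal gallery between these chambers stays in $\R$; tracing such a gallery through $\Gamma_{j_0}$ will show that the unique $\Gamma_{j_0}$-path from $u$ to $v$ lies in $V_\R$. This produces the desired proper non-trivial convex $N$-invariant subtree, contradicting the minimality of the action of $N$ on $\Gamma_{j_0}$.
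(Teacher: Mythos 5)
Your argument is correct, and its outer skeleton coincides with the paper's: reduce to the tree-wall trees via (the corollary to) \cref{prop:minimalcharacterisations} and apply Tits' result \cref{prop:normaltreedense}. Where you genuinely diverge is in the key step of showing that the induced action of $N$ on each $\Gamma_i$ is non-trivial. The paper disposes of this in one line by quoting the fact that, for irreducible $\Delta$, the action of the \emph{full} group $\Aut(\Delta)$ on every tree-wall tree is faithful (\cite[Lemma~3.19]{silva1}), so $N\neq 1$ immediately gives $\bar N_i\neq 1$. You instead prove the required non-triviality from scratch: assuming $N\leq K_{i_0}$ for some $i_0$, you use irreducibility to find a neighbour $j_0$ with $m_{i_0j_0}=\infty$ on which $N$ already acts minimally (via Tits), and then the observation that every $j_0$-tree-wall sits inside a single $(I\setminus\{i_0\})$-residue lets you manufacture a proper non-empty $N$-invariant subtree $V_\R$ of $\Gamma_{j_0}$, a contradiction. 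This is a legitimate alternative: it is self-contained (no appeal to the faithfulness lemma), at the cost of being longer and of using the combinatorial-density hypothesis for a statement (faithfulness) that actually holds for $\Aut(\Delta)$ unconditionally on irreducible buildings. The one step you leave as a sketch---convexity of $V_\R$---is indeed routine and not a gap: a chamber-level minimal gallery inside the convex residue $\R$ descends to a path in $\Gamma_{j_0}$ all of whose tree-wall vertices are residues of type $\{j_0\}\cup j_0^\perp\subseteq I\setminus\{i_0\}$ meeting $\R$, hence contained in $\R$, and whose $(I\setminus\{j_0\})$-residue vertices meet $\R$; since a connected set of vertices in a tree spans a subtree, $V_\R$ is convex. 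This is essentially the same bookkeeping as in the paper's own proof of \cref{prop:minimalcharacterisations} (the set $D'$ there), together with \itemref{prop:parallelinrab}{2}.
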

\begin{proof}
    The subtle but important point here is that because $\Delta$ is irreducible, the induced action of $\Aut(\Delta)$ on each tree-wall tree $\Gamma_i$ is \emph{faithful}; see \cite[Lemma~3.19]{silva1}.
    In particular, the action of $N$ on each $\Gamma_i$ is non-trivial.
	The result now follows from \cref{prop:minimalcharacterisations,prop:normaltreedense}.
\end{proof}

Recall from \cite[Proposition 3.2]{tits_arbres} that if $T$ is a tree, then the automorphisms of~$T$ come in three flavors.
\begin{definition}
    Let $T$ be a tree and let $g \in \Aut(T)$.
    \begin{enumerate}
        \item We call $g$ an \emph{inversion} if $g$ inverts some edge of $T$.
        \item We call $g$ \emph{elliptic} if $g$ fixes a vertex.
        \item We call $g$ \emph{hyperbolic} otherwise, in which case there is a bi-infinite path $\gamma$ in $T$ left invariant by $g$ and on which $g$ induces a non-trivial translation. We call $\gamma$ the \emph{axis} of $g$.
    \end{enumerate}
    In particular, if $G \leq \Aut(T)$ acts \emph{without inversion} (i.e., $G$ fixes the bipartition of the vertex set of $T$), then each element of $G$ is either elliptic or hyperbolic.
\end{definition}
It is useful to transfer these notions to automorphisms of $\Delta$ by the action on their tree-wall trees.
Notice that the tree-wall trees have two genuinely different types of vertices, so inversions cannot occur.
\begin{definition}
    Let $g \in \Aut(\Delta)$ and let $i \in I$.
    \begin{enumerate}
        \item We call $g$ \emph{$i$-elliptic} if the induced action of $g$ on $\Gamma_i$ is elliptic.
        \item We call $g$ \emph{$i$-hyperbolic} otherwise. The axis $\gamma$ for the action of $g$ on $\Gamma_i$ will be called the \emph{$i$-axis} of $g$.
    \end{enumerate}
\end{definition}
We will need the following observation.
\begin{lemma}\label{prop:magictreestuff}
    Let $T$ be a tree and let $G \leq \Aut(T)$ act without inversion. Assume that the action is combinatorially dense.
    Let $e_1,e_2$ be two edges of $T$.
    Then there exists a hyperbolic element $g \in G$, the axis $\gamma$ of which contains both $e_1$ and $e_2$.
\end{lemma}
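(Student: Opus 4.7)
The plan is to first verify that $G$ contains hyperbolic elements, then to show that every edge of $T$ lies on the axis of at least one hyperbolic element, and finally to combine two hyperbolic elements whose axes pass through $e_1$ and $e_2$ respectively into a single hyperbolic element whose axis contains both edges.

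For the existence of hyperbolic elements I would invoke the standard Bass--Serre dichotomy: an action on a tree in which every element is elliptic must fix either a vertex or an end. Combinatorial density excludes a fixed end by definition, and excludes a global fixed vertex because such a vertex would form a proper $G$-invariant convex subtree, contradicting minimality. Hence $G$ contains some hyperbolic $g_0$, and therefore the set $U = \bigcup \{ A(g) : g \in G \text{ hyperbolic} \}$ (where $A(g)$ denotes the axis) is a non-empty, $G$-invariant subset of $T$.

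The central tool is the classical ``combination lemma'' for hyperbolic tree automorphisms: given hyperbolic $g, h \in G$ with distinct axes joined by a bridge $[p, q]$ (with $p \in A(g)$ and $q \in A(h)$), and after possibly replacing $g, h$ by their inverses so that the attracting ends of $g$ in $A(g)$ and of $h$ in $A(h)$ both point \emph{away} from the bridge, the product $g^n h^m$ is hyperbolic for all sufficiently large $n, m$, with axis containing the full bridge $[p, q]$ and extending arbitrarily far into $A(g)$ and $A(h)$ in those attracting directions (see Serre's \emph{Trees}, or \cite{tits_arbres}). Using this, I would show that $U$ is convex: given $v_1 \in A(g_1)$ and $v_2 \in A(g_2)$ with $A(g_1) \neq A(g_2)$, after possibly inverting $g_i$ so that $v_i$ lies on the attracting side of the corresponding bridge endpoint, the combination lemma yields a hyperbolic element whose axis contains both $v_1$ and $v_2$, hence the entire geodesic $[v_1, v_2]$. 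Thus $U$ is a non-empty $G$-invariant subtree, and minimality forces $U = T$.

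To finish, I would pick hyperbolic $g_1, g_2 \in G$ whose axes contain $e_1$ and $e_2$ respectively. If $A(g_1) = A(g_2)$ then $g = g_1$ already works. Otherwise, I would apply the combination lemma with $v_i$ chosen as the endpoint of $e_i$ lying further from the bridge (after possibly inverting $g_i$), producing a hyperbolic $g = g_1^n g_2^m$ whose axis contains both $e_1$ and $e_2$. The main obstacle is the bookkeeping in the combination lemma --- tracking attracting versus repelling ends relative to the bridge and choosing the correct signs --- but this is a standard and well-documented result in Bass--Serre theory, so my approach would be to cite it rather than reprove it.
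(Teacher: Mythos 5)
There is a genuine gap in your final step. The combination lemma you invoke is stated (correctly) for hyperbolic elements whose axes are \emph{disjoint}, joined by a bridge, but you then apply it to any pair $g_1, g_2$ with \emph{distinct} axes through $e_1$ and $e_2$. Distinct axes in a tree may intersect in a segment or even in a ray, and in the latter case your strategy cannot be repaired by any bookkeeping with powers: if $A(g_1)$ and $A(g_2)$ share an end $\xi$ (and $e_1, e_2$ lie on the two legs outside the common ray), then every element of $\langle g_1, g_2\rangle$ fixes $\xi$, and a hyperbolic automorphism of a tree fixes no ends other than the two ends of its axis; but a geodesic line containing both $e_1$ and $e_2$ must run leg-to-leg through the branch point and hence cannot have $\xi$ as an end. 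So in that configuration \emph{no} word in $g_1, g_2$ has the required axis, and the problem is not the bookkeeping but the choice of the pair: your ``every edge lies on some axis'' statement (which is fine, and whose convexity argument needs the combination lemma only in the disjoint case anyway) is too weak to guarantee a usable pair.

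This is exactly the point where the paper's proof differs: instead of taking arbitrary axes through $e_1$ and $e_2$, it uses combinatorial density once more, via \cite[Lemma 4.3]{leboudec}, to produce hyperbolic elements whose axes are contained in the two \emph{disjoint half-trees} determined by $e_1$ (not containing $e_2$) and by $e_2$ (not containing $e_1$). Then the axes are automatically disjoint and the bridge between them necessarily traverses both $e_1$ and $e_2$, so the single product $g_1 g_2$ is hyperbolic with axis containing both edges. To fix your argument you would need a strengthening of your intermediate claim in this direction (e.g.\ ``every half-tree contains the axis of a hyperbolic element of $G$'', which is precisely Le Boudec's lemma), rather than just ``every edge lies on some axis''.
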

\begin{proof}
	We may of course assume that $e_1\neq e_2$. Let $T_1$ be the half-tree determined by $e_1$ not containing $e_2$ and let $T_2$ be the half-tree determined by $e_2$ not containing $e_1$. By \cite[Lemma 4.3]{leboudec}, there exist two hyperbolic elements $g_1$ and $g_2$ in $G$ with axes $\gamma_1 \subset T_1$ and $\gamma_2 \subset T_2$, respectively. In particular, the axes are disjoint, and the shortest path joining $\gamma_1$ and $\gamma_2$ passes through both $e_1$ and $e_2$. The product $g_1 g_2$ is then the required hyperbolic automorphism; its axis will contain this shortest path.
\end{proof}

\begin{proposition}
\label{hyperbolic}
	\label{lem:normalsubcontainshyperbolics}
	Assume that the diagram $M$ is irreducible. Let $i\in I$ and let $e_1,e_2$ be adjacent edges of the $i$-tree-wall tree $\Gamma_i$. Let $N\unlhd G$ be a nontrivial normal subgroup and assume that the action of $G$ on $\Delta$ is combinatorially dense. Then $N$ contains an $i$-hyperbolic automorphism, the axis of which contains both $e_1$ and $e_2$.
\end{proposition}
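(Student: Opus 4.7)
The plan is to reduce the question to a statement about the action of $N$ on the $i$-tree-wall tree $\Gamma_i$ and then to directly apply \cref{prop:magictreestuff}. The two preparatory ingredients I need are that $N$ acts on $\Gamma_i$ combinatorially densely and without inversion; once these are in hand, \cref{prop:magictreestuff} produces the desired element with no further work.

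First I would observe that the bipartition of $\Gamma_i$ is between $i$-tree-walls on the one hand and $(I \setminus \{i\})$-residues of $\Delta$ on the other. These two classes of vertices are objects of different combinatorial nature, so they are necessarily preserved by every element of $\Aut(\Delta)$. Consequently $N$ acts on $\Gamma_i$ without inversion, and every element of $N$ is either $i$-elliptic or $i$-hyperbolic.

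Next, I would use the hypothesis that $G$ is combinatorially dense on $\Delta$ together with \cref{lem:combdensenormal} (whose irreducibility assumption on $M$ is part of our current hypothesis) to conclude that $N$ itself acts combinatorially densely on $\Delta$. By \cref{prop:minimalcharacterisations}, this is equivalent to saying that the induced action of $N$ on every tree-wall tree is combinatorially dense; in particular the action of $N$ on $\Gamma_i$ is combinatorially dense.

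Finally, I would apply \cref{prop:magictreestuff} to the action of $N$ on the tree $\Gamma_i$, taking as input the (distinct) edges $e_1$ and $e_2$. This yields a hyperbolic automorphism $g \in N$ of $\Gamma_i$ whose axis $\gamma$ contains both $e_1$ and $e_2$; by definition $g$ is then an $i$-hyperbolic element of $N$ with $i$-axis $\gamma$, which is exactly the conclusion. There is no real obstacle in this argument, since everything has been set up by the preceding lemmas; the only point that deserves a verification is the non-inversion property, which could otherwise break the elliptic/hyperbolic dichotomy underlying \cref{prop:magictreestuff}. The adjacency hypothesis on $e_1$ and $e_2$ is in fact stronger than needed, but it matches the shape of the statement as used later.
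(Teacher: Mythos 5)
Your proposal is correct and follows essentially the same route as the paper: apply \cref{lem:combdensenormal} to get combinatorial density of $N$ on $\Delta$, transfer it to $\Gamma_i$ via \cref{prop:minimalcharacterisations}, and conclude with \cref{prop:magictreestuff}. Your extra verification of the non-inversion property is the same observation the paper makes just before defining $i$-elliptic/$i$-hyperbolic elements, so nothing is missing.
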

\begin{proof}
	By \cref{lem:combdensenormal}, the action of $N$ on $\Delta$ is combinatorially dense.
	By \cref{prop:minimalcharacterisations}, the induced action of $N$ on $\Gamma_i$ is combinatorially dense, so we can apply \cref{prop:magictreestuff}.
\end{proof}

Our next goal is to show that the groups $\G(\F,\Facute)$ satisfy the \emph{independence property}.
\begin{definition}
	Let $\P$ be an $i$-panel and let $\T$ be the corresponding $i$-tree-wall.
	Let $G \leq \Aut(\Delta)$ and let $G_{(\T)}$ be the pointwise stabilizer of the set of all chambers in $\T$.
	For each $c\in\P$, we define the morphism
	\[\varphi_{\P,c}\colon G_{(\T)}\to V_i(c),
		\quad\text{with }
		\varphi_{\P,c}(g)\colon\Delta\to\Delta\colon d\mapsto\begin{cases}
			g\acts d & \text{if $d\in X_i(c)$;}\\
			d & \text{otherwise.}
		\end{cases}\]
	The proof of \cite[Proposition 5.2]{caprace2014} shows that $\varphi_{\P,c}(g)$ is indeed an automorphism of $\Delta$, which is clearly contained in $V_i(c)$, but not necessarily in $V_i^G(c)$.
	We say that $G$ satisfies the \emph{independence property} when for each panel $\P$, each chamber $c \in \P$ and each $g \in G$, the automorphism $\varphi_{\P,c}(g)$ is again contained in $G$ (and hence in $V_i^G(c)$).
	If this is the case, then for each panel $\P$, the elements of $G$ can be decomposed into ``independent'' elements acting on each of the wings:
	\[
	   \varphi_\P \colon G_{(\T)} \xrightarrow{\sim} \prod_{c\,\in\,\P}  V_i^G(c) \colon g \mapsto \prod_{c\,\in\,\P} \varphi_{\P,c}(g) .
	\]
\end{definition}

\begin{proposition}
	\label{prop:restricteduniversalindependence}
	$\G(\F,\Facute)$ satisfies the independence property.
\end{proposition}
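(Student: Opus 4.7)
My plan is to verify directly, for each $g \in \G(\F,\Facute)_{(\T)}$, the two defining conditions of $\G(\F,\Facute)$ for the automorphism $\varphi_{\P,c}(g)$: that every local action lies in the appropriate $\acute F_j$ (which makes $\varphi_{\P,c}(g) \in \U(\Facute)$), and that only finitely many of them fail to lie in the corresponding $F_j$. The fact that $\varphi_{\P,c}(g)$ is an automorphism of $\Delta$ at all is already provided by \cite[Proposition 5.2]{caprace2014}, as noted in the preceding definition, so I would simply quote it.

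The heart of the argument is a trichotomy for an arbitrary panel $\P'$ of $\Delta$ according to its intersection with the wing $X_i(c)$: either (a) $\P' \subseteq X_i(c)$, or (b) $\P' \cap X_i(c) = \emptyset$, or (c) $\P'$ meets both $X_i(c)$ and its complement. The crucial geometric step---and really the only substantive one---is that case~(c) forces $\P'$ to be parallel to $\P$, and hence to belong to the tree-wall $\T$. Indeed, if two chambers $d, d' \in \P'$ satisfy $\proj_\P(d) = c \neq \proj_\P(d')$, then $\P$ and $\P'$ are parallel by \itemref{prop:parallelinrab}{4}, so by \itemref{prop:parallelinrab}{1} they share the type $i$, which places $\P'$ in $\T$.

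With this trichotomy in hand, the local actions become transparent. In case~(a), $\varphi_{\P,c}(g)$ agrees with $g$ on all of $\P'$, so $\sigma_\lambda(\varphi_{\P,c}(g), \P') = \sigma_\lambda(g, \P') \in \acute F_j$. In case~(b), $\varphi_{\P,c}(g)$ fixes $\P'$ pointwise, so the local action is trivial. In case~(c), the panel $\P' \in \T$ is already fixed pointwise by $g$ (since $g \in G_{(\T)}$), and hence also by $\varphi_{\P,c}(g)$, yielding the identity again. In all three cases the local action lies in $\acute F_j$, so $\varphi_{\P,c}(g) \in \U(\Facute)$; and the only panels on which the local action of $\varphi_{\P,c}(g)$ can fail to lie in $F_j$ are those of case~(a) that were already singularities of $g$. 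Consequently $S(\varphi_{\P,c}(g)) \subseteq S(g)$ is finite, placing $\varphi_{\P,c}(g)$ in $\G(\F,\Facute)$.

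I do not foresee a serious obstacle: the entire proof pivots on the single geometric fact that a ``split'' panel for the partition $\{X_i(d) : d \in \P\}$ must itself be an $i$-panel parallel to $\P$, which is an immediate corollary of the parallel-residue criterion in \itemref{prop:parallelinrab}{4}. Once this is recognized, the bookkeeping for both the ambient-group condition and the finiteness of singularities is essentially automatic.
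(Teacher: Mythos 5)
Your proof is correct and follows essentially the same route as the paper: a panel-by-panel check of the local actions of $\varphi_{\P,c}(g)$, where panels inside $X_i(c)$ inherit the local action of $g$, panels missing $X_i(c)$ are fixed, and any panel meeting both $X_i(c)$ and its complement is parallel to $\P$ (hence lies in $\T$ and is fixed pointwise), so that $S(\varphi_{\P,c}(g)) \subseteq S(g)$ is finite. The only difference is that you make the straddling-panel step explicit via \itemref{prop:parallelinrab}{4}, whereas the paper delegates that geometric ingredient to the proof of the independence property for $\U(\F)$ in \cite[Proposition 3.16]{silva1}.
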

\begin{proof}
	Let $G = \G(\F,\Facute)$, let $\P$ be an $i$-panel, let $\T$ be the corresponding $i$-tree-wall and let $c \in \P$.
	Let $g \in G_{(T)}$ and $h = \varphi_{\P,c}(g) \in \Aut(\Delta)$.
	To show that $h \in G$, we will follow the proof of the independence property for $\U(\F)$ from \cite[Proposition 3.16]{silva1}, with as only new ingredient the fact that we have to allow a finite number of singularities.
	More precisely, if some $j$-panel $\P'$ is contained in $\T$ or in any of the other $i$-wings $X_i(d)$ with $d \neq c$, then $\P'$ is fixed by $h$; in particular, it cannot be a singularity.
	On the other hand, if $\P'$ is contained in the $i$-wing $X_i(c)$, then the local action of $h$ on $\P'$ coincides with the local action of $g$ on $\P'$, which, by assumption, is contained in $F_j$ up to a finite number of exceptions, for which it is contained in $\acute F_j$.
	This shows that indeed $h \in \G(\F,\Facute)$.
\end{proof}

We will now prepare for \cref{prop:normaltreewall}, which will be a crucial ingredient for our simplicity result.
The following lemma is a straightforward generalization of \cite[Lemma 3.17]{silva1}, which holds in a more general setting than only for universal groups.

\begin{lemma}
	\label{lem:auxnormaltreewall}
	Let $G \leq \Aut(\Delta)$.
	Let $i\in I$ and let $c$ and $d$ be two chambers in a common $i$-panel $\P$ of $\Delta$. Let $g\in G$ and assume that $\P$ and $g\acts\P$ are not parallel, that $\proj_\P(g\acts c)=d$ and that $\proj_{g\acts\P}(d)=g\acts c$. Consider an automorphism
	\[b\in\prod_{e\,\in\,\P\setminus\{c,d\}} V_i^G(e).\]
	Then there exists an automorphism $h\in G$ such that $b=[h,g]$.
\end{lemma}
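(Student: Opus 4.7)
The hypotheses on $g$ pin down a very specific geometric configuration between the $i$-panels $\P$ and $g\acts\P$. Non-parallelism together with \itemref{prop:parallelinrab}{4} forces the projection $\proj_\P$ to be constant on $g\acts\P$ with value $d$, and symmetrically $\proj_{g\acts\P}$ to be constant on $\P$ with value $g\acts c$. In terms of wings this gives $g\acts\P \subseteq X_i(d)$ and $\P \subseteq X_{i,g\acts\P}(g\acts c)$, and a straightforward gate-property argument upgrades these to the inclusions $X_i(e) \subseteq X_{i,g\acts\P}(g\acts c)$ for every $e \in \P\setminus\{d\}$ and $X_{i,g\acts\P}(g\acts e) \subseteq X_i(d)$ for every $e \in \P\setminus\{c\}$. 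Setting $A := \bigcup_{e\,\in\,\P\setminus\{c,d\}} X_i(e)$ and $B := g\acts A = \bigcup_{e\,\in\,\P\setminus\{c,d\}} X_{i,g\acts\P}(g\acts e)$, one obtains $A \cap B = \emptyset$, while $b$ is supported in $A$ and $g b g^{-1}$ in $B$. Iterating the same reasoning with $g^n\acts\P$ in place of $\P$ shows that the supports $g^n\acts A$ for $n \geq 0$ are pairwise disjoint and that every chamber lies in only finitely many of them.

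Following the strategy of \cite[Lemma~3.17]{silva1}, my proposal is to set
\[
    h \;:=\; \prod_{n = 0}^{\infty} g^n b g^{-n} ,
\]
which is a well-defined automorphism of $\Delta$ because its factors have pairwise disjoint supports. Those factors commute with one another, so conjugation by $g$ simply shifts the index: $g h g^{-1} = \prod_{n \geq 1} g^n b g^{-n}$, and hence $h = b \cdot g h g^{-1}$. Rearranging gives
\[
    [h, g] \;=\; h \cdot g h^{-1} g^{-1} \;=\; b ,
\]
which is exactly the commutator identity we want.

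The real work lies in verifying that $h$ actually belongs to $G$. The finite truncations $h_N := \prod_{n=0}^{N} g^n b g^{-n}$ clearly lie in $G$ and converge to $h$ in the permutation topology on $\Aut(\Delta)$; for the closed universal group $G = \U(\F)$ of \cite{silva1} this is enough, but for a general subgroup $G$ one must invoke some closure-type property ensuring that such infinite products of $g$\dash conjugates of $b$ remain inside $G$. A secondary point is the drift-to-infinity of the supports $g^n\acts A$: the mutual projection conditions $\proj_\P(g\acts c)=d$ and $\proj_{g\acts\P}(d)=g\acts c$ should force $g$ to act as a translation along a geodesic in the $i$-tree-wall tree $\Gamma_i$, so that the panels $g^n\acts\P$ genuinely march off to infinity and their wings do not recur. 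These two verifications are where I expect the bulk of the technical work to sit.
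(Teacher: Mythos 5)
Your construction is the same one the paper intends: its own ``proof'' is a one-line reference to \cite[Lemma~3.17]{silva1}, and the argument there is precisely the Tits-style device you describe --- the infinite product $h=\prod_{n\geq 0}g^n b g^{-n}$, disjointness of the translated supports, and the telescoping identity $[h,g]=b$. Your geometric reductions are also fine: non-parallelism together with \itemref{prop:parallelinrab}{4} makes both projections constant, the wings nest, and your own inclusion $g^k\acts A\subseteq X_i(d)$ for $k\geq 1$ already yields the ``drift to infinity'', so the second point you flag is not an outstanding issue at all.

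The genuine gap is exactly the one you name and then leave open: membership $h\in G$. In \cite{silva1} this is where the specific group enters --- $\U(\F)$ is cut out by panelwise conditions (equivalently, it is closed in $\Aut(\Delta)$), so the limit of the truncations $h_N\in\U(\F)$ remains in the group. For the lemma as stated, with an arbitrary $G\leq\Aut(\Delta)$, there is no closure-type property to invoke, and none follows from the recorded hypotheses: take $\Delta$ a thick locally finite tree (rank two, $m_{12}=\infty$), $g$ a type-preserving translation whose axis passes through $\P$ via $c$ and $d$, $b\neq 1$ supported on the single wing $X_i(e)$ with $e\in\P\setminus\{c,d\}$, and $G=\langle g,b\rangle$. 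Then $G\cong\langle b\rangle\wr\mathbb{Z}$, and the homomorphism sending $\beta g^m$ to the total sum of the coordinates of $\beta$ in $\langle b\rangle$ kills every commutator of $G$ but not $b$; so $b$ is not of the form $[h,g]$ with $h\in G$, even though all hypotheses of the lemma hold. In other words, the step you postpone is not a technicality that further work will absorb: with only the stated assumptions the infinite product genuinely need not lie in $G$, and any complete argument must use extra structure of the groups to which the lemma is applied (closedness for $\U(\Facute)$, or the specific panelwise description of $\G(\F,\Facute)$ and its normal subgroups) --- a point that the paper's ``copied verbatim'' claim glosses over as well. As a blind proof of the statement, your proposal is therefore incomplete at its decisive point.
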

\begin{proof}
    The proof of \cite[Lemma 3.17]{silva1} can be copied \emph{verbatim} to this more general setting.
\end{proof}

The proof of the following result is similar to that of \cite[Proposition~3.21]{silva1}, with the important difference that the proof in \emph{loc.\@~cit.} relies on the assumption that the local groups are transitive, which we do not assume. This is why we will use the stronger starting point from \cref{lem:normalsubcontainshyperbolics}.

\begin{proposition}
	\label{prop:normaltreewall}
	Let $\Delta$ be irreducible and let $G \leq \Aut(\Delta)$. Assume that $G$ is combinatorially dense. Let $N\unlhd G$ be a nontrivial normal subgroup. Then $N$ contains $V_i^G(c)$ for all $i \in I$ and all chambers $c \in \Delta$.
	
	In particular, if $G$ also satisfies the independence property, then $N$ contains the fixator $G_{(\T)}$ of every $i$-tree-wall $\T$\!.
\end{proposition}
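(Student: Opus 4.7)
Plan:

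The plan is to prove that $V_i^G(c) \subseteq N$ for every $i \in I$ and every chamber $c \in \Delta$; the ``in particular'' statement then follows at once, because the independence property gives a decomposition $G_{(\T)} = \prod_{c' \in \P} V_i^G(c')$ for any $i$-panel $\P \subseteq \T$, and each factor will already lie in $N$.

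The overall strategy will mirror the proof of \cite[Proposition~3.21]{silva1}, with the role played there by the transitivity of the local groups now played by the $i$-hyperbolic elements of $N$ furnished by \cref{lem:normalsubcontainshyperbolics}. Fix $i \in I$ and $c \in \Delta$, let $\P$ be the $i$-panel containing $c$, let $\T$ be its tree-wall, and choose a second chamber $d \in \P \setminus \{c\}$. The $i^\perp$-residues $\R_c$ and $\R_d$ through $c$ and through $d$ give two distinct edges of $\Gamma_i$, both incident to the vertex $\T$, so they are adjacent in $\Gamma_i$. \Cref{lem:normalsubcontainshyperbolics} then supplies an $i$-hyperbolic element $g \in N$ whose $i$-axis contains both $\R_c$ and $\R_d$. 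Since $g$ is hyperbolic on $\Gamma_i$, we have $g\T \neq \T$, and therefore $\P$ and $g\P$ are non-parallel $i$-panels. Tracking the direction of the axis through $\T$ (and replacing $g$ by $g^{-1}$ if necessary), one then verifies $\proj_\P(g\P) = \{d\}$ and $\proj_{g\P}(\P) = \{gc\}$, which are exactly the projection hypotheses demanded by \cref{lem:auxnormaltreewall}.

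Applying \cref{lem:auxnormaltreewall} with this $g$, every $b \in \prod_{e \in \P \setminus \{c,d\}} V_i^G(e)$ can be written as $[h,g]$ for some $h \in G$; since $g \in N$ and $N \unlhd G$, such commutators all lie in $N$, whence $V_i^G(e) \subseteq N$ for every $e \in \P \setminus \{c,d\}$. Cycling through different choices of the excluded pair $\{c,d\}$ in $\P$ then yields $V_i^G(e) \subseteq N$ for every $e \in \P$, and in particular $V_i^G(c) \subseteq N$. The edge case $q_i = 2$, in which $\P \setminus \{c,d\}$ is empty, will be handled by first applying the construction to a more distant panel reached along the axis of a suitably long hyperbolic element of $N$ (yielding $V_i^G(e) \subseteq N$ for some $e$ with $X_i(e)$ disjoint from $X_i(c)$), and then conjugating by an element of $G$ that carries $e$ to $c$, which is permissible because $N \unlhd G$.

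The main obstacle will be verifying the projection conditions in the hypothesis of \cref{lem:auxnormaltreewall}: one has to unwind the correspondence between the edges of $\Gamma_i$ incident to $\T$ (which are $i^\perp$-residues) and the chambers of $\P$ (which are in bijection with those edges), and show that anchoring the axis of $g$ through $\R_c$ and $\R_d$ really does force the gate of $g\P$ on $\P$ to be exactly $d$ and the gate of $\P$ on $g\P$ to be exactly $gc$. Once this geometric translation is in place, the rest of the argument is a direct commutator manipulation inside the normal subgroup $N$.
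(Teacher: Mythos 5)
Your proposal follows essentially the same route as the paper: fix the $i$-panel $\P$ through $c$, use \cref{lem:normalsubcontainshyperbolics} to produce an $i$-hyperbolic element $g\in N$ whose axis in $\Gamma_i$ contains the two edges corresponding to $c$ and $d$, orient the axis to get the projection conditions $\proj_\P(g\acts c)=d$ and $\proj_{g\acts\P}(d)=g\acts c$, apply \cref{lem:auxnormaltreewall} so that the resulting commutators $[h,g]$ land in $N$, vary the pair $\{c,d\}$ to capture all wings $V_i^G(e)$ with $e\in\P$, and invoke the independence property for the tree-wall fixator statement. The only divergence is your $q_i=2$ digression, which the paper sidesteps by assuming $\Delta$ thick (so $\lvert\P\rvert\geq 3$) and whose sketched workaround would not succeed anyway, since in that case every $i$-panel is thin and the product in \cref{lem:auxnormaltreewall} remains empty.
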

\begin{proof}
	Let $\T$ be an arbitrary $i$-tree-wall and let $\P$ be an $i$-panel of $\T$\!. In the $i$-tree-wall tree $\Gamma_i$, $\T$ corresponds to a vertex $v$ and the edges incident to $v$ correspond to the chambers in $\P$. Let $c,d$ be two distinct chambers of $\P$. By \cref{lem:normalsubcontainshyperbolics}, there exists some $i$-hyperbolic element $g\in N$ such that the axis of $g$ in $\Gamma_i$ contains the two edges corresponding to $c$ and $d$. We can assume that $d$ is the chamber pointing towards the attracting end of $g$ in $\partial\Gamma_i$ and $c$ towards the repelling end, replacing $g$ by $g^{-1}$ if necessary.
	
	Note that $g\acts\T\neq\T$ since $g$ is an $i$-hyperbolic automorphism. Moreover, by construction, we have $\proj_\P(g\acts c)=d$ and $\proj_{g\acts\P}(d)=g\acts c$. Hence we can apply \cref{lem:auxnormaltreewall} to obtain that
	\begin{equation}\label{eq:Vi}
	   \prod_{e\,\in\,\P\setminus\{c,d\}} V_i^G(e) \subseteq N. \tag{$\ast$}
	\end{equation}
	Since $\Delta$ is assumed to be thick, there is at least one such chamber $e\,\in\,\P\setminus\{c,d\}$. Now recall that $c$ and $d$ were chosen arbitrarily in $\P$, so we can repeat the proof with different pairs $\{c,d\}$ and multiply the resulting equations \eqref{eq:Vi} to obtain that 
	\[ \prod_{e\,\in\,\P} V_i^G(e) \subseteq N. \]
	Moreover, if $G$ satisfies the independence property, then $G_{(\T)} = \prod_{e\,\in\,\P} V_i^G(e)$, showing the last statement.
\end{proof}

In particular, we can now deduce that $\G(\F,\Facute)$ is either discrete or \emph{monolithic}, i.e., it contains a unique minimal normal subgroup (called the \emph{monolith} of the group). Moreover, the monolith will turn out to be simple.
Again, we have been inspired by \cite[\S 4]{leboudec}.
\begin{proposition}
	\label{prop:restricteduniversalmonolithic}
	Let $\Delta$ be irreducible and let $\F,\Facute$ be local data such that not all local groups $F_i$ are free. Then $\G(\F,\Facute)$ is monolithic; the monolith is the subgroup $N$ generated by all tree-wall fixators.
	Moreover, $N$ is simple.
\end{proposition}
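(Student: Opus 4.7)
The plan is to establish three claims in turn: $N$ is a normal subgroup of $\G(\F,\Facute)$; every non-trivial normal subgroup of $\G(\F,\Facute)$ contains $N$ (which, combined with $N \neq 1$, yields the monolith property); and $N$ is abstractly simple. The guiding idea is that \cref{prop:normaltreewall} does most of the heavy lifting, once it is applied to the correct ambient group---first to $\G$, and then to $N$ itself.

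First, I would dispose of normality: since $\G$ permutes tree-walls and $g \G_{(\T)} g^{-1} = \G_{(g \acts \T)}$, the collection of tree-wall fixators is a union of $\G$-conjugacy classes, so the subgroup they generate is normal in $\G$. For the minimality clause of the monolith property, I would apply \cref{prop:normaltreewall} to $G = \G$ itself: the ambient group is combinatorially dense by \cref{cor:combinatorially dense} (using irreducibility of $\Delta$ to rule out isolated nodes) and satisfies the independence property by \cref{prop:restricteduniversalindependence}. Hence any non-trivial normal subgroup $N' \unlhd \G$ contains every $\G_{(\T)}$ and therefore all of $N$.

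Non-triviality of $N$ is the one step I expect to carry genuine content and thus the main obstacle. Under the hypothesis that not all $F_i$ are free, \cref{prop:restricteddiscrete} gives that $\G$ is non-discrete. From this I would extract a non-trivial element in some $V_i^{\U(\F)}(c) \subseteq V_i^{\G}(c) \subseteq \G_{(\T)} \subseteq N$ as follows: pick $k \in I$ with $F_k$ non-free, a non-trivial permutation $f \in F_k$ fixing the color $\lambda_k(c)$ of some chamber $c$, and an index $i \neq k$ (which exists because $|I| \geq 2$). Using \cref{lem:restrictedlocallemma} together with the independence property of $\U(\F)$ at the $i$-panel through $c$, I would localize a realization of $f$ on the $k$-panel through $c$ to an automorphism supported inside the wing $X_i(c)$, thus producing a non-trivial element of $V_i^{\U(\F)}(c)$.

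For simplicity of $N$, I would run \cref{prop:normaltreewall} a second time, this time with $G = N$. Two hypotheses must be re-verified. Combinatorial density of $N$ follows from \cref{lem:combdensenormal} applied to the non-trivial normal subgroup $N \unlhd \G$, relying on irreducibility of $\Delta$. The independence property transfers from $\G$ essentially for free: since $\G_{(\T)} \subseteq N$ by construction, one has $N_{(\T)} = \G_{(\T)}$ and $V_i^N(c) = V_i^\G(c)$, so the decomposition $\G_{(\T)} \cong \prod_{c \in \P} V_i^\G(c)$ doubles as a decomposition $N_{(\T)} \cong \prod_{c \in \P} V_i^N(c)$. \Cref{prop:normaltreewall} then forces any non-trivial $M \unlhd N$ to contain every $V_i^N(c) = V_i^\G(c)$, hence every $N_{(\T)} = \G_{(\T)}$, and therefore all of $N$; so $M = N$, as required.
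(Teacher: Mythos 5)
Your overall structure matches the paper's: normality and minimality via \cref{prop:normaltreewall} applied to $\G$, nontriviality from non-freeness, and simplicity by running \cref{prop:normaltreewall} again with $G = N$. Your handling of the simplicity step is a streamlined variant of the paper's; the paper introduces $A$, the intersection of all nontrivial normal subgroups of $N$, observes $A$ is characteristic in $N$ hence normal in $\G$, and then applies \cref{prop:normaltreewall} once more with $G = \G$. You instead note that $V_i^N(c) = V_i^\G(c)$ and that these already generate $\G_{(\T)}$ by independence of $\G$, so any nontrivial $M \unlhd N$ contains $N$ directly. Both routes are valid; yours avoids the detour through $A$.

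The one place where your sketch has a real gap is the nontriviality of $N$, which the paper itself leaves as an assertion but which you try to fill in. First, choosing $i \neq k$ is not sufficient: you need $m_{ik} = \infty$. If $m_{ik} = 2$, then $k \in i^\perp$, so the $k$-panel $\P_k$ through $c$ lies entirely inside the $i$-tree-wall $\T_i$ (the residue of type $\{i\} \cup i^\perp$ through $c$); any element of $V_i(c)$ fixes $\T_i$ pointwise and hence acts trivially on $\P_k$, so it cannot witness $f$. (Such an $i$ exists because $\Delta$ is irreducible and $|I| \geq 2$, so $k$ is not isolated.) Second, and more seriously, the automorphism $g$ produced by \cref{lem:restrictedlocallemma} is only guaranteed to stabilize $\P_k$; there is no reason for it to fix the $i$-tree-wall $\T_i$ pointwise, yet the wing decomposition $\varphi_{\P_i, c}$ is only defined on $\U(\F)_{(\T_i)}$. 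As stated, the combination of \cref{lem:restrictedlocallemma} with independence at $\P_i$ does not go through. The fix is to bypass \cref{lem:restrictedlocallemma} and apply \cref{prop:evenmoretechnicalextensionstuff} directly with $C = \T_i \cup \P_k$ (which, when $m_{ik} = \infty$, is convex and panel-closed since $\T_i$ is a gated residue meeting $\P_k$ only in $c$), feeding in the partial automorphism that is the identity on $\T_i$ and implements $f$ on $\P_k$. The extension then lies in $\U(\F)_{(\T_i)}$, has local action $f$ at $\P_k$, and can be localized to $X_i(c)$ as you intend.
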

\begin{proof}
	Write $\G = \G(\F,\Facute)$ and $N = \langle \G_{(\T)} \rangle$, where $\T$ ranges over all tree-walls. Note that $N$ is nontrivial, since not all local groups act freely. Thanks to \cref{cor:combinatorially dense,prop:restricteduniversalindependence}, we may apply \cref{prop:normaltreewall} to obtain that every normal subgroup of~$\G$ contains $N$. Hence $\G$ is monolithic with monolith $N$.

	To show that $N$ is simple, let $A\trianglelefteq N$ be the intersection of all nontrivial normal subgroups of $N$. This is a characteristic subgroup of $N$, which is therefore normal in~$\G$.
	Notice that $N$ is combinatorially dense by \cref{lem:combdensenormal}, so by \cref{prop:normaltreewall} (applied with $N$ in place of $G$), any nontrivial normal subgroup of $N$ contains $V_i^N(c)$, for all $i \in I$ and all $c \in \Delta$. It follows that also $A$ contains all $V_i^N(c)$.
	Now recall that $V_i^{\G}(c) \leq \G_{(\T)} \leq N$ (where $\T$ is the $i$-tree-wall through $c$), so $V_i^N(c) = V_i^{\G}(c) \neq 1$.
	
	It follows that $A$ is a nontrivial normal subgroup of $\G$.
	By \cref{prop:normaltreewall} again, $A$ contains the monolith $N$, so that in fact $A=N$, showing that $N$ is indeed simple.
\end{proof}

As an interesting corollary of \cref{prop:restricteduniversalmonolithic}, we then obtain the following.
\begin{proposition}
	Assume that the diagram $M$ is irreducible and not ladderful.
	Then every nontrivial normal subgroup of $\G(\F,\Facute)$ is open.
	In particular, $\G(\F,\Facute)$ is topologically simple if and only if it is abstractly simple.
\end{proposition}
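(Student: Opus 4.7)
The plan is to leverage \cref{prop:restricteduniversalmonolithic} and the fact that, in a non-ladderful diagram, some tree-wall is a finite set of chambers, so that its fixator is open.

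First, I would dispose of the trivial case: if every local group $F_i$ is free, then by \cref{prop:restricteddiscrete} the group $\G(\F,\Facute)$ is discrete, and every subgroup (in particular every nontrivial normal subgroup) is automatically open. So I may assume that not all $F_i$ are free. Then \cref{prop:restricteduniversalmonolithic} applies and $\G(\F,\Facute)$ is monolithic; its monolith is the simple subgroup $N$ generated by all tree-wall fixators $\G(\F,\Facute)_{(\T)}$. Consequently, every nontrivial normal subgroup of $\G(\F,\Facute)$ contains $N$, and it will suffice to show that $N$ itself is open.

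The core of the argument is to exhibit at least one tree-wall whose fixator is open. Since $M$ is not ladderful, there is some $i\in I$ which is not a rung of the diagram, i.e.~there do not exist $j,k\in I$ with $m_{ij}=m_{ik}=2$ and $m_{jk}=\infty$. In particular, no $i$-panel of $\Delta$ can be the rung of a ladder of $\Delta$. By the contrapositive of \cref{prop:unboundedrungs}, the set of $i$-panels parallel to any fixed $i$-panel is bounded, hence finite by local finiteness of $\Delta$; since each panel contains only $q_i<\infty$ chambers, every $i$-tree-wall $\T$ is a finite subset of $\Delta$. The pointwise stabilizer $\U(\F)_{(\T)}$ is then an identity neighborhood in $\U(\F)$ for the permutation topology, hence $\G(\F,\Facute)_{(\T)}\supseteq \U(\F)_{(\T)}$ is an open subgroup of $\G(\F,\Facute)$ (recall that $\U(\F)$ is open in $\G(\F,\Facute)$ by \cref{def:leboudectopology}). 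Therefore $N$ contains this open subgroup, and so $N$ itself is open. Hence every nontrivial normal subgroup of $\G(\F,\Facute)$ is open.

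For the ``in particular'' statement, one direction is trivial: every abstractly simple group is \emph{a fortiori} topologically simple. Conversely, suppose $\G(\F,\Facute)$ is topologically simple, and let $H\unlhd \G(\F,\Facute)$ be a nontrivial normal subgroup. By what we just proved, $H$ is open, hence closed (open subgroups of topological groups are always closed). Topological simplicity then forces $H=\G(\F,\Facute)$, so $\G(\F,\Facute)$ is abstractly simple. The only genuine obstacle is the openness of the monolith, and this is exactly handled by the non-ladderful hypothesis through \cref{prop:unboundedrungs}.
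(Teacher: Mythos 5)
Your proof is correct and follows essentially the same route as the paper's: dispose of the discrete case, invoke \cref{prop:restricteduniversalmonolithic} for the monolith generated by tree-wall fixators, and use the non-ladderful hypothesis (via the contrapositive of \cref{prop:unboundedrungs} together with local finiteness) to produce a finite tree-wall whose fixator is open, so the monolith and hence every nontrivial normal subgroup is open. You simply spell out a couple of steps that the paper compresses, namely the justification that a finite tree-wall exists and the derivation of the ``in particular'' clause from the fact that open subgroups are closed.
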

\begin{proof}
	Write $\G = \G(\F,\Facute)$.
	If every local group $F_i$ acts freely, then $\G$ is discrete and there is nothing to prove.
	
	Otherwise, $\G$ is monolithic by \cref{prop:restricteduniversalmonolithic} and the monolith is generated by tree-wall fixators. By the assumption on the diagram, there exists at least one index $i\in I$ such that an $i$-tree-wall~$\T$ contains only finitely many chambers. Then the fixator $\G_{(\T)}$ --- as the intersection of finitely many chamber stabilizers --- is open. Consequently, the monolith of $\G$ is open, and so is every non\-trivial normal subgroup of $\G$.
\end{proof}

We need one more result from \cite{bossaert}.
\begin{proposition}
	\label{prop:genbychamstab}
	Assume that $M$ has no isolated nodes. The following are equivalent.
	\begin{enumerate}[label={\rm (\alph*)}]
		\item\label{prop:genbychamstab:a} $\U(\F)^+=\U(\F)$;
		\item\label{prop:genbychamstab:b} $\U(\F)^+$ has finite index in $\U(\F)$;
		\item\label{prop:genbychamstab:c} the local groups are generated by point stabilisers for every $i\in I$, and are transitive for every $i$ in some vertex cover of (the underlying graph of) the diagram of $\Delta$.
	\end{enumerate}
\end{proposition}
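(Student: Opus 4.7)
The implication (a) $\Rightarrow$ (b) is trivial; the plan for the remaining two implications is as follows. Throughout, write $F_i^+ := \bigl\langle (F_i)_x \bigm| x \in \Omega_i \bigr\rangle$ for the subgroup generated by point stabilizers.

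For (c) $\Rightarrow$ (a), I would reduce to showing that $\U(\F)^+$ already acts transitively on chambers. Since every chamber stabilizer $\U(\F)_c$ lies in $\U(\F)^+$ by definition, and since $\U(\F)$ is transitive on chambers whenever the local groups are transitive on a vertex cover (by the harmony criterion of \cref{prop:universalorbits}), such a transitivity statement for $\U(\F)^+$ would immediately give $\U(\F) = \U(\F)_c \cdot \U(\F)^+ = \U(\F)^+$. To produce transitivity of $\U(\F)^+$, I would argue inductively along a minimal gallery joining two chambers $c$ and $d$: the condition $F_i = F_i^+$ lets me realize each single-step $i$-adjacency by an element of $\U(\F)$ that fixes a chamber in a neighbouring $i$-wing while inducing the required color permutation, and such an element automatically belongs to $\U(\F)^+$.

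For (b) $\Rightarrow$ (c), I would prove the contrapositive. The key observation is that \cref{prop:parallellocals} assigns, to any $g \in \U(\F)$ and any parallelism class of $i$-panels, a common local action in $F_i$; passing to the quotient yields a homomorphism
\[ \varphi_i \colon \U(\F) \longrightarrow \prod_{[\P]} F_i / F_i^+, \]
indexed over parallelism classes $[\P]$ of $i$-panels, and one checks that $\U(\F)^+$ lies in every kernel. If some $F_i$ is not generated by point stabilizers, then $F_i/F_i^+$ is non-trivial; since $M$ has no isolated node there are infinitely many parallelism classes of $i$-panels, so the image of $\varphi_i$ is infinite and $\U(\F)^+$ has infinite index. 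If instead every $F_i = F_i^+$ but no vertex cover witnesses transitivity, then some edge $\{i,j\}$ of the diagram has both $F_i$ and $F_j$ intransitive, and the coarser orbit partitions on $i$- and $j$-panels combine within a single $\{i,j\}$-residue to produce infinitely many harmonious-but-$\U(\F)^+$-inequivalent configurations, again forcing infinite index.

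The main obstacle will be the technical verification in (b) $\Rightarrow$ (c) that the kernel of the collective map $(\varphi_i)_{i\in I}$ is actually contained in $\U(\F)^+$ --- so that trivial reduced signature really forces membership in $\U(\F)^+$. I would attack this via a local-to-global decomposition: starting from an element whose signature vanishes at every parallelism class, I would peel off chamber-stabilizer factors sphere-by-sphere around a base chamber, in the spirit of \cref{prop:evenmoretechnicalextensionstuff}, using the hypothesis $F_i = F_i^+$ to ensure that each step can be realized by an element of $\U(\F)_{c}$ for a suitable chamber $c$.
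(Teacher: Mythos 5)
The paper proves this proposition by citing \cite[Theorem 5.6]{bossaert} directly, so there is no in-text argument to compare against step by step; but your blind attempt has two genuine gaps. In the direction (c) $\Rightarrow$ (a), the claim that ``$\U(\F)$ is transitive on chambers whenever the local groups are transitive on a vertex cover'' contradicts \cref{prop:universalorbits}: two chambers lie in the same $\U(\F)$-orbit only if they are harmonious, which requires that for \emph{every} $k\in I$ their $k$-colors lie in a common $F_k$-orbit. Thus $\U(\F)$ (and a fortiori $\U(\F)^+$) is transitive on chambers only when all $F_k$ are transitive, and a vertex cover is strictly weaker. The transitivity target you set for $\U(\F)^+$ is therefore unachievable; the Frattini-style reduction must instead aim at showing that $\U(\F)^+$ and $\U(\F)$ have the \emph{same} (possibly several) chamber orbits, after which $\U(\F)=\U(\F)_c\cdot\U(\F)^+$ still follows.

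For (b) $\Rightarrow$ (c), the ``signature'' map $\varphi_i$ is not a group homomorphism: by \cref{lem:localcomposition}, $\sigma_\lambda(gh,\P)=\sigma_\lambda(g,h\acts\P)\cdot\sigma_\lambda(h,\P)$, and $h\acts\P$ may lie in a different parallelism class than $\P$, so your tuple of local actions is at best a cocycle with respect to the permutation action of $\U(\F)$ on parallelism classes, not a map to the direct product. More importantly, the step you call ``one checks that $\U(\F)^+$ lies in every kernel'' is false: a generator $g\in\U(\F)_c$ has its local action constrained only on the panels through $c$ and their parallels, while on a distant $i$-panel $\P'$ the local action $\sigma_\lambda(g,\P')$ can realise any prescribed element of $F_i$ (use \cref{prop:ext-res} to extend from the residue containing $c$ and $\P'$), in particular one outside $F_i^+=\langle (F_i)_x\mid x\in\Omega_i\rangle$. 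Hence $\U(\F)^+$ already surjects onto $F_i/F_i^+$ in every coordinate, and no quotient of this kind can detect $\subgroupindex{\U(\F)}{\U(\F)^+}$. This sinks the ``main obstacle'' you flag at the end: the premise that a vanishing reduced signature characterises $\U(\F)^+$ is already wrong, so the local-to-global peeling cannot be the right strategy for (b) $\Rightarrow$ (c).
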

\begin{proof}
    This is \cite[Theorem 5.6]{bossaert}, except that statement \ref{prop:genbychamstab:b} is not mentioned in the statement of \emph{loc.\@~cit.} but only in its proof, more precisely in the first paragraph on p.\@~884.
\end{proof}

We now come to our final simplicity result.
We again assume without loss of generality that $F_i = \acute F_i$ for every rung type $i\in I$.

\begin{theorem}
	\label{thm:restrictedvirtsimple}
	Let $\Delta$ be a thick irreducible right-angled building over an index set $I$ with $\lvert I \rvert \geq 2$. Let $\F$ and $\Facute$ be the local data as in \cref{def:restricted2}. Assume that $F_i = \acute F_i$ for every $i\in I$ that is the type of a rung. Moreover, assume that not all local groups $\acute F_i$ are free.
	
	Then the restricted universal group $\G(\F,\Facute)$ is virtually simple if and only if $\acute F_i$ is generated by point stabilizers for every $i\in I$ and transitive for every $i$ in some vertex cover of the diagram of $\Delta$.
\end{theorem}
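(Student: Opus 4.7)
The plan is to reduce virtual simplicity of $G := \G(\F,\Facute)$ to the finite-index condition $[G:N]<\infty$, where $N$ is the simple monolith from \cref{prop:restricteduniversalmonolithic} (generated by tree-wall fixators), and then to bridge this with simplicity of the ambient $\U(\Facute)$ through the density statement of \cref{prop:restrictedclosure}. First I would establish the equivalence ``$G$ virtually simple $\iff [G:N]<\infty$'': $N$ contains nontrivial open subgroups $G_{(\T)}$ (for non-rung tree-walls $\T$), hence $N$ is infinite; any finite-index simple subgroup $H\leq G$ must then satisfy $H\cap N\neq 1$, forcing $H\leq N$ by simplicity of $H$ and thus $[G:N]\leq[G:H]<\infty$.

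The technical heart is the relationship between $N$ and $\U(\Facute)^+$ via the density $\overline{G}=\U(\Facute)$. Since each tree-wall fixator $G_{(\T)}$ lies in $\U(\Facute)_{(\T)} \leq \U(\Facute)^+$, we have $\overline{N}\leq\U(\Facute)^+$ as closed subgroups of $\U(\Facute)$. For the forward direction, $[G:N]<\infty$ gives $G=\bigsqcup_{j=1}^{k}g_{j}N$, and taking closures in $\U(\Facute)$ yields $\U(\Facute)=\bigcup_{j} g_{j}\overline{N}$, whence
\[[\U(\Facute):\U(\Facute)^+]\leq[\U(\Facute):\overline{N}]\leq k<\infty;\]
\cref{prop:genbychamstab} then delivers the stated conditions on $\acute F_i$.

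For the backward direction, assume the conditions on $\acute F_i$. By \cref{thm:Usimple}, $\U(\Facute)$ is simple, so the nontrivial closed normal subgroup $\overline{N}\trianglelefteq \U(\Facute)$ coincides with $\U(\Facute)$; in particular $N$ is dense in $\U(\Facute)$. Continuity of the $\U(\Facute)$-action on the discrete set $\Delta$ then forces $N$ and $\U(\Facute)$ to share identical chamber orbits, so $\U(\Facute)=N\cdot\U(\Facute)_{c_0}$ for any fixed chamber $c_0$. Intersecting with $G$ (and using $N\leq G$) gives $G=N\cdot G_{c_0}$, hence $[G:N]=[G_{c_0}:G_{c_0}\cap N]$, reducing the index problem to a single chamber stabilizer.

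The main obstacle is showing that $[G_{c_0}:G_{c_0}\cap N]<\infty$. The chamber stabilizer $G_{c_0}$ is only regionally compact (in the spirit of \cref{prop:panelstabregionallycompact}), so finiteness does not follow from compactness alone; instead I would construct an explicit continuous homomorphism $\theta\colon G_{c_0}\to A$ onto a finite group $A$, with kernel equal to $G_{c_0}\cap N$. The target $A$ would be assembled from the finite cosets $\acute F_i/F_i$ together with signatures of local actions at singularities in a fundamental neighborhood of $c_0$; the verification that $N$ lies in $\ker\theta$ relies on the independence property (\cref{prop:restricteduniversalindependence}) together with the panel-closed extension machinery (\cref{prop:evenmoretechnicalextensionstuff}), while the hypothesis that each $\acute F_i$ is generated by point stabilizers is essential for absorbing residual local-action data into tree-wall fixators and hence into $N$.
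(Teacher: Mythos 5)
Your forward direction is correct and is essentially the paper's argument, slightly streamlined: you pass from a finite-index simple subgroup to finite index of the monolith $N$, observe $N\leq \U(\Facute)^+$ directly (the paper routes this through the auxiliary normal subgroup $\U(\Facute)^+\cap\G(\F,\Facute)$ and monolithicity, but the effect is the same), take closures using \cref{prop:restrictedclosure}, and invoke \cref{prop:genbychamstab}. The reduction in the backward direction up to ``$N$ is dense in $\U(\Facute)$, hence $\G(\F,\Facute)=N\cdot\G(\F,\Facute)_{c_0}$'' also matches the paper.

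The gap is in the last step, which you yourself flag as the main obstacle but do not actually prove: the finiteness of $[\G(\F,\Facute)_{c_0}:\G(\F,\Facute)_{c_0}\cap N]$. The proposed homomorphism $\theta$ onto a finite group with kernel exactly $\G(\F,\Facute)_{c_0}\cap N$ is not constructed, and as described it is not even well defined: $F_i$ need not be normal in $\acute F_i$, so ``cosets $\acute F_i/F_i$'' do not form a group, and the singular panels of an element vary in number and position, so there is no fixed finite target assembled from ``a fundamental neighborhood of $c_0$''. More seriously, the inclusion $\ker\theta\cap\G(\F,\Facute)_{c_0}\subseteq N$ --- i.e.\ that every stabilizer element with trivial ``signature'' is a product of tree-wall fixators --- is precisely the hard content of the theorem and is left unargued; since a finite quotient of $\G(\F,\Facute)_{c_0}$ with kernel $\G(\F,\Facute)_{c_0}\cap N$ exists \emph{if and only if} the index is finite, the plan is close to circular. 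The paper closes this step by a soft topological argument that avoids any analysis of stabilizer quotients: $N$ is open, and from $\G(\F,\Facute)=\G(\F,\Facute)_{\{\P\}}\cdot N$ together with the regional compactness of panel stabilizers (\cref{prop:panelstabregionallycompact}) one writes $\G(\F,\Facute)=\dircup_n H_n\cdot N$ with $H_n$ compact open; compact generation (\cref{cor:restrictedcompgen}) then forces $\G(\F,\Facute)=H_n\cdot N$ for some $n$, so $\G(\F,\Facute)/N$ is compact and discrete, hence finite. Your remark that regional compactness ``alone'' does not suffice is exactly where compact generation of the whole group should enter; incorporating it would replace your unproven construction of $\theta$.
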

\begin{proof}
	Abbreviate $\G = \G(\F,\Facute)$ and $\U = \U(\Facute)$.
	First, suppose that $\G$ has a simple subgroup $H$ of finite index. Since $\G$ is infinite, $H$ is a normal subgroup of $\G$.
	On the other hand, by \cref{prop:restricteduniversalmonolithic}, $\G$ is monolithic, with monolith $N = \langle \G_{(\T)} \rangle$, where $\T$ ranges over all tree-walls. Since $N \leq H$ and $H$ is simple, we have $N = H$, so the monolith $N$ of $\G$ has finite index in $\G$.

	Now let $A = \U^+ \cap \G \unlhd \G$.
	By \cref{prop:restrictedclosure}, $\G$ is dense in $\U$. Since $\U^+$ is an open subgroup of $\U$, it follows that $A \neq 1$. Hence $A$ has finite index in~$\G$.
	 Taking the closure, it then follows that $\overline{A}$ has finite index in $\overline{\G} = \U$. Since $\overline{A} \leq \overline{\U^+} \cap \overline{\G} = \U^+$, we see that $\U^+$ has finite index in $\U$, and the characterisation follows from \cref{prop:genbychamstab}.

    \medskip
    
	Conversely, suppose that the local data $\Facute$ satisfies the assumptions postulated. Then by \cref{thm:Usimple}, $\U$ is simple. Let $N = \langle \G_{(\T)} \rangle$ be the open normal subgroup of $\G$ generated by all fixators of tree-walls $\T$ of $\Delta$. By \cref{prop:restricteduniversalmonolithic}, $N$ is a simple group and it is the monolith of $\G$. We will show that $N$ has finite index in~$\G$.
	
	The closure of $N \leq \Aut(\Delta)$ is a normal subgroup of $\U$. Hence $N$ is dense in~$\U$, and consequently in $\G$ as well. This implies that the $N$-orbits and $\G$-orbits on the building $\Delta$ agree: for any $c\in\Delta$ and $g\in\G$, the stabilizer $\G_c$ is open, hence the intersection $N \cap {g\cdot\G_c}$ is nonempty, and any automorphism $h$ in the intersection satisfies $h\acts c= g\acts c$. It follows that $\G = {\G_c}\cdot N$.
	
	Let $\P$ be a panel of $\Delta$. By the previous paragraph, $\G = {\G_{\{\P\}}}\cdot N$. Together with \cref{prop:panelstabregionallycompact}, we obtain an increasing union
	\[\G = \dircup_{n\in\mathbb N} H_n \cdot N,\]
	where the $H_n \leq \G_{\{\P\}}$ are compact open subgroups. On the other hand, $\G$ is compactly generated by \cref{cor:restrictedcompgen}. The family $\{H_n \cdot N\}_{n\in\mathbb N}$ defines an open cover of any compact generating set and hence $\G = H_n\cdot N$ for some $n\in\mathbb N$.
	
	In conclusion, $N$ is both open and cocompact in $\G$. The coset space ${\G}/N$ being both compact and discrete, it follows that indeed $N$ has finite index in $\G$, which concludes our proof.
\end{proof}


\bigskip

\nocite{*}
\footnotesize
\bibliographystyle{alpha}
\bibliography{sources}

\bigskip

\end{document}